\documentclass[11pt,a4paper,reqno]{amsart}
\usepackage{amsfonts}
\usepackage{amsthm}
\usepackage{amssymb} 
\usepackage{faktor} 
\usepackage{graphicx}
\usepackage{float} 
\usepackage{subcaption} 
\usepackage{amscd}
\usepackage[utf8]{inputenc}
\usepackage{t1enc}
\usepackage{dsfont}

\usepackage{xfrac} 
\usepackage{xcolor}
\usepackage{amsbsy} 
\newcommand*{\transp}[2][-3mu]{\ensuremath{\mskip1mu\prescript{\smash{\mathrm t\mkern#1}}{}{\mathstrut#2}}}

\newcommand{\defeq}{\mathrel{\mathop:}=}

\DeclareRobustCommand{\stirling}{\genfrac\{\}{0pt}{}}
\usepackage[all]{xy} 
\usepackage{mathtools} 
\usepackage[mathscr]{eucal}
\usepackage{indentfirst}
\usepackage{graphicx}
\usepackage{pict2e}
\usepackage{epic}
\numberwithin{equation}{section}
\usepackage[margin=2.9cm]{geometry}
\usepackage{epstopdf} 
\usepackage[bottom]{footmisc} 
\usepackage{array}

\newcommand{\Wn}{{W^{(1)}}}
\newcommand{\Wd}{W^{(2)}}

\newcommand{\gl}{\lambda}
\newcommand{\N}{\mathbb{N}}

\newcommand{\eps}{\varepsilon}

\newcommand{\cP}{\mathcal{P}}

\renewcommand{\P}{\mathbb{P}}
\newcommand{\E}{\mathbb{E}}
\newcommand{\R}{\mathbb{R}}

\theoremstyle{plain}
\newtheorem{Th}{Theorem}[section]
\newtheorem{Lemma}[Th]{Lemma}

\newtheorem{Prop}[Th]{Proposition}
\newtheorem{rem}[Th]{Remark}
\theoremstyle{definition}

\newtheorem{Rem}[Th]{Remark}

\usepackage{hyperref}

\begin{document}
	\title{Long-Range Correlation of the Sine$_\beta$ point Process}
	\author{Laure Dumaz}
	\address{CNRS \& Department of Mathematics and Applications, \'Ecole Normale Sup\'erieure (Paris), 45 rue d’Ulm, 75005 Paris, France}
	\email{laure.dumaz@ens.fr}
	\author{Martin Malvy}
	\address{Ceremade, University Paris Dauphine, Place du Mar'echal de Lattre de Tassigny, 75016 Paris \& Department of Mathematics and Applications, \'Ecole Normale Sup\'erieure (Paris), 45 rue d’Ulm, 75005 Paris, France}
	\email{martin.malvy@ens.fr}

	\date{\today}

	\begin{abstract} We study the correlations of the celebrated Sine$_\beta$ point process. This point process arises as the bulk scaling limit of $\beta$-ensembles and has a geometric description through the Brownian carousel, as shown by Valk\'o and Vir\'ag (2009). 
		
		We establish that the averaged $k$-point truncated correlation functions decay polynomially in the limit of large separation. We show that the decay exponent is of order $1/\beta$ for large $\beta$. This is a step towards a conjecture by Forrester and Haldane regarding the exact asymptotics of the two-point correlation function, a problem  recently addressed by Qu and Valk\'o (2025). Our proofs, which rely on a careful analysis of the coupling of diffusions associated with the Brownian carousel, hold for all $\beta >0$ and $k \geq 1$, significantly extending previous results limited to specific values of $\beta$ or $k$. 
	\end{abstract}
	
	\maketitle
	\tableofcontents
	
	\section{Introduction}
	\subsection{Log-gases and random matrices} 	
	
	The \text{Sine}$_\beta$ point process is a fundamental object arising in Random Matrix Theory (RMT) and statistical mechanics. In RMT, it describes the bulk scaling limit of the eigenvalue distributions of the celebrated invariant Gaussian ensembles for $\beta = 1,2,4$, as well as the general $\beta >0$ tridiagonal ensembles introduced by Dumitriu and Edelman \cite{dumitriu_matrix_2002}.
	Their joint probability distribution is given by the Gibbs measure
	\begin{align}\label{beta ensemble}
		\frac{1}{Z_{n,\beta}}\prod_{1\leq i<j \leq n}|\lambda_i-\lambda_j|^\beta \prod_{i=1}^n e^{-\beta \lambda_i^2/4}\,\mathrm{d}\gl_i\,.
	\end{align}
	
	In statistical mechanics, the distribution \eqref{beta ensemble} represents a one-dimensional gas of particles interacting via a two-dimensional Coulomb (logarithmic) repulsion, confined by a quadratic potential, at inverse temperature $\beta$. These systems, known as \textit{log-gases}, belong to the broader class of Riesz gases (see \cite{lewin_coulomb_2022} for a comprehensive overview).
	Equivalently, the Sine$_\beta$ process describes a gas of quantum particles at zero temperature evolving on the line  and interacting via the potential $\beta(\beta -2)/|x|^2$, with the cases of $\beta = 0$ and  $\beta = 2$ respectively corresponding to free bosons and fermions. The integrability of this model was studied by Calogero, Sutherland and Moser \cite{calogero1971, sutherland1971, moser1975}.

	\subsubsection*{The Stochastic Approach: The Brownian Carousel}
	For the classical values $\beta=1,2,4$, the microscopic fluctuations of \eqref{beta ensemble} were understood early on, due to the integrable structure of the models (determinantal for $\beta=2$ and Pfaffian for $\beta=1,4$, see e.g., \cite{mehta_random_2004}). 
	
	Significant progress was made by Valk\'o and Vir\'ag \cite{valko_continuum_2009} in understanding the case of arbitrary $\beta >0$. Using the tridiagonal matrix models of \cite{dumitriu_matrix_2002}, they proved the convergence of the rescaled point process in the bulk of the spectrum. Specifically, for any energy level $E \in (-2,2)$, the recaled point process
	$$
	\sum_{k = 1}^n \delta_{n\sqrt{4-E^2}\;(\lambda_k/\sqrt{n}-E)}
	$$
	converges vaguely as $n \to \infty$ towards a non-trivial, translation-invariant random point process: the \text{Sine}$_\beta$ point process
	(see also \cite{killip_eigenvalue_2006} for an alternative construction using Circular $\beta$-ensembles).
	
	Moreover their approach gives a geometrical interpretation of the Sine$_\beta$ point process via the Brownian carousel. The counting function of the \text{Sine}$_\beta$ process is described by $\lambda \mapsto \alpha_\lambda(+\infty)/2\pi$, where the phases $(\alpha_\lambda)_{\lambda\in\mathbb{R}}$ are solutions to a family of coupled stochastic differential equations driven by a complex Brownian motion. 	This construction directly implies that $\text{Sine}_\beta$ is invariant under translations, with intensity $1/(2\pi)$ but it also gives access to fine statistics, including large gap probabilities \cite{valko_large_2010} and Central Limit Theorems \cite{kritchevski_scaling_2012}.

	\subsubsection*{The Variational Approach: Renormalized Energy}
	
	To understand the microscopic behavior of log-gases, Serfaty and Leblé \cite{leble_large_2017} developed a variational framework based on the ``Renormalized Energy'' of point processes $\mathcal{P}$, denoted by $\mathbb{W}(\mathcal{P})$. They proved that, up to extraction, the averaged microscopic point process converges to point processes $\mathcal{P}$ that minimize the free energy functional:
	\begin{equation}\label{free energy}
		\mathcal{F}_\beta(\mathcal{P}) = \beta \mathbb{W}(\mathcal{P}) + \mathrm{ent}[\mathcal{P}|\Pi],
	\end{equation}
	where $\mathrm{ent}[\cdot|\Pi]$ denotes the specific relative entropy with respect to a Poisson point process $\Pi$ of intensity $1/(2\pi)$. 

	The \text{Sine}$_\beta$ point process is then identified as the unique minimizer of $\mathcal{F}_\beta$ among translation-invariant processes \cite{erbar2018}.

	\subsubsection*{The Statistical Physics Perspective: DLR Equations}

	In \cite{dereudre_dlr_2021}, Dereudre, Hardy, Leblé and Maïda obtain an alternative description of Sine$_\beta$ as an infinite Gibbs measure at inverse temperature $\beta >0$ associated with the logarithmic pair potential interaction. More precisely, they show that \text{Sine}$_\beta$ solves the canonical Dobrushin-Lanford-Ruelle (DLR) equations for a specific renormalized logarithmic potential.

	While it is known that \text{Sine}$_\beta$ solves these DLR equations, the uniqueness of the solution remains an open question for general $\beta$, except for the case $\beta=2$ due to the determinantal structure.
	
	\subsection{Main results of the paper}
	
	Consider a point process such that
	\begin{align*}
\E\Big[ (\# \mbox{ points in }B)^k \Big]  < \infty\,.
	\end{align*}
	for all $k$ and bounded Borel set $B$.
	Then, its $k$-point correlation measure $\rho^{(k)}$
	is defined such that for any \emph{pairwise disjoint} Borel sets $B_1,\dots, B_k$, one has:
	\begin{align*}
		\E\Big[ \prod_{i=1}^k (\# \mbox{ points in }B_i) \Big] = \rho^{(k)}(B_1 \times \dots \times B_k)\,.
	\end{align*}  
	(See paragraph \ref{subsec:correlations} below for a more precise definition). We will use the notation $\text{Sine}_\beta(B)$ for the number of points of a point process with law $\text{Sine}_\beta$ in a given Borel set $B$. In particular, $\E[\text{Sine}_\beta(B)] = \rho^{(1)}(B)$.
	
	A consequence of the DLR equations is that for all $\beta>0$, such correlation measures have a density with respect to Lebegue measure, called correlation functions, see \cite{Malvy_these} for more details. For the \text{Sine}$_\beta$ point process, with a slight abuse of notation, we simply denote both the measure and the density by $\rho^{(k)}$ in the rest of the article.
	
	\medskip
	
	The main objective of the present article is to prove asymptotic bounds on the correlation functions for the Sine$_\beta$ point process when the points are widely separated, and which hold for all parameters $\beta >0$.

	\subsubsection*{Two-point correlation function}   
	
	Let us first state our result for $k=2$ as it is the most insightful case.   
	For any $\beta>0$, we define the \textit{truncated} two-point correlation function as
	$$\rho^{(2)}_T(x,y)=\rho^{(2)}(x,y)-\rho^{(1)}(x)\rho^{(1)}(y)\,.$$  In our case, note that $\rho^{(1)}(x) = 1/(2 \pi)$. Since $\text{Sine}_\beta$ is translation invariant, we will slightly abuse notation in the following by writing $\rho^{(2)}_T(r)$ for $\rho^{(2)}_T(x, x+r)$. We analyze in this paper a locally averaged version of the truncated correlation function and show its polynomial decay.
	\begin{Th}[Decay of the truncated two-point correlation function]\label{theo:main}
		There exist an absolute constant $c>0$ and for all $\beta, \lambda_0>0$ a constant $C=C(\beta,\lambda_0)>0$ so that for any $\lambda\leq \lambda_0$, $r\geq 1$,
		\begin{equation*}\Big|\int_0^{\gl} \rho^{(2)}_T(x+r) \mathrm{d}x\Big|\leq C r^{-\frac{c\beta}{1+\beta^2}}\,.
		\end{equation*}
	\end{Th}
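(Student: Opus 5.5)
Integrating the truncated correlation over $x\in[0,\lambda]$ and a unit window in the first variable turns the statement into a covariance bound,
\[
\int_0^{\gl}\rho^{(2)}_T(x+r)\,\mathrm{d}x \approx \mathrm{Cov}\big(\text{Sine}_\beta([0,\eps]),\,\text{Sine}_\beta([r,r+\gl])\big)/\eps .
\]
More robustly, I would use translation invariance to write it as $\mathrm{Cov}(N_1,N_2)$, where $N_1$ counts the points in a unit interval near $0$ and $N_2$ counts the points in $[r,r+\gl]$. The diagonal contributions vanish once $r\geq 1$ exceeds the window sizes. Then $\rho^{(2)}_T$ restricted to such boxes is recovered from these covariances by averaging, with the constant depending on $\lambda_0$. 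So the task is to show $|\mathrm{Cov}(N_1,N_2)|\leq C r^{-c\beta/(1+\beta^2)}$.

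Next I would express both counts through the Brownian carousel. By translation invariance, $N_2$ has the law of $\text{Sine}_\beta([0,\gl])$ shifted by $r$. In the carousel, a shift of the spectral window corresponds to a time change in the SDE: the relative phase $\alpha_{\lambda}-\alpha_{\mu}$ evolves in logarithmic time $t=\log$ of the spatial scale. Decorrelation at spatial distance $r$ should then come from mixing over a time of order $\log r$.

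Concretely, I would run the hyperbolic-carousel SDE for the relative phases $\psi_\lambda(t)=\alpha_\lambda(t)-\alpha_0(t)$ driven by $Z$. There are two groups of phases: those encoding $N_1$, with parameters in $[0,1]$, and those encoding $N_2$, with parameters in $[r,r+\gl]$. For $t\lesssim \log r$, the small parameters stay frozen, since $\lambda e^{-\beta t/4}$ is small, while the large ones rotate. I would then couple the process with an independent copy in which the Brownian increments driving the $r$-group are replaced. The aim is a coupling after which, with probability $1-O(r^{-\delta})$, the terminal values $\lfloor\psi_\lambda(\infty)/2\pi\rfloor$ for the $r$-group agree with those of a copy whose driving noise is independent of the noise determining the $[0,1]$-group. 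The covariance is then bounded by $\|N_1\|_{L^2}\|N_2\|_{L^2}$ times the square root of the failure probability, via Cauchy--Schwarz, since counts have all moments.

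The main obstacle is the coupling rate. The exponent $c\beta/(1+\beta^2)$ should come from a spectral gap for the diffusion of the relative angle on the circle, of the form $\mathrm{d}\psi = \lambda f(t)\,\mathrm{d}t+\mathrm{Re}\big((e^{-i\psi}-1)\,\mathrm{d}Z\big)$ after the scaling of time by $\beta$. Its generator has drift of order $1$ and diffusivity of order $1/\beta$, so the mixing rate is of order $\min(\beta,1/\beta)\asymp\beta/(1+\beta^2)$. Running for time $\log r$ then gives $e^{-c\beta\log r/(1+\beta^2)}$. I would implement this with a mirror or reflection coupling of the two angle diffusions on the circle. The key estimate is a Lyapunov or hitting-time bound: the coupling time has exponential moment $\E[e^{\kappa\tau}]<\infty$ with $\kappa\asymp\beta/(1+\beta^2)$, uniformly in $\lambda\leq\lambda_0$. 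The difficulty is that the multiples of $2\pi$ are traps, near which the noise degenerates, so I would need to handle the degenerate-diffusion behaviour near these points carefully.
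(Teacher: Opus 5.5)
Your reduction to $\mathrm{Cov}(N_1,N_2)$ for the windows $[0,1]$ and $[r,r+\gl]$ matches the paper, as does your closing step: a coupling that fails with probability $O(r^{-\delta})$, combined with moment bounds on the counts. The gap is in how you propose to build that coupling. The relative-angle diffusion you write down, with parameter $\gl\le\gl_0$, is not ergodic on the circle. Its drift $\gl f(t)$ decays, and $\alpha_\gl(t)$ converges a.s. to a random point of $2\pi\mathbb{Z}$, with $\alpha_\gl(\infty)/2\pi$ a nondegenerate functional of the whole noise path. The phase encoding $N_2$ is $\alpha_{r+\gl}-\alpha_r$. It starts at $0$ and has parameter $\gl$, so it is active on times $O(1)$ and then freezes exactly like the $[0,1]$-group. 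It is not frozen for $t\lesssim\ln r$: only the individual phases $\alpha_{r+x}$ rotate fast, and they enter only through the driving noise $W_r=\int_0^{\cdot}e^{-i\alpha_r}\,\mathrm{d}W$. Your independent copy also starts at $0$, so a mirror or reflection coupling of the angles has nothing to do; the entire issue is the noise. You need a noise independent of $W$ that agrees with $W_r$, with probability $1-O(r^{-\delta})$, on the time window that determines $N_2$. That means showing the law of $(W,W_r)$ on that window is close in total variation to a product law. A reflection-coupled noise is adapted to $W$ and not independent of it. No mixing property of the small-parameter angle generator gives this near-independence, so your exponent heuristic ($\min(\beta,1/\beta)$ as a mixing rate, times $\ln r$) is not backed by any estimate.

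The missing idea is where the decorrelation actually comes from. For $t<T_r=(4/\beta)\ln r$, $\alpha_r\approx r(1-e^{-\beta t/4})$ spins at speed $rf(t)$. An It\^o integration by parts then gives $\big|\E\big[(W(t)-W(s))\overline{(W_r(t)-W_r(s))}\big]\big|=2\big|\E\int_s^t e^{i\alpha_r(u)}\,\mathrm{d}u\big|\le C(1+\beta^{-1})e^{\beta t/4}/r$. The paper builds the proof on this in three steps:
(i) it replaces $\alpha^{(i)}(\infty)$ by an Euler scheme at a time $T<T_r$, measurable with respect to the increments $W^{(i)}(t_j)$, with error $Ce^{4T}/n$;
(ii) it bounds the total variation between the Gaussian increment vectors and their independent version by $Cn^{3/2}e^{\beta T/4}/r$, via the Hellinger distance;
(iii) it proves a separate freezing estimate $\P(|\alpha_\gl(T)-\alpha_\gl(\infty)|>\pi/2)\le Ce^{-(1\wedge\beta)T/128}$, using $R=\ln\tan(\{\alpha_\gl\}_{2\pi}/4)$, a Girsanov argument for escaping the unstable point $\pi$, and drifted-Brownian comparisons for attraction to $2\pi\mathbb{Z}$.
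Step (iii) is where the degenerate-noise difficulty you defer is actually handled. Choosing $T=\frac{4}{41+\beta}\ln r$ yields the exponent $\frac{1\wedge\beta}{32(41+\beta)}\asymp\frac{\beta}{1+\beta^2}$. This is the product of the freezing rate $\asymp 1\wedge\beta$ and the usable decorrelation window $T\asymp\min(1,\beta^{-1})\ln r$, not a spectral gap.
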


	This result relates to a conjecture by Forrester and Haldane in \cite{forrester2010}. Using the Calogero-Sutherland Hamiltonian (see e.g. Chapter 5 of the book of Sutherland \cite{sutherland2004}) and Haldane's theory about compressible quantum fluids \cite{haldane_effective_1981}, Forrester predicted the asymptotic behavior for the two-points truncated correlation function. He established this rigorously for even integers values of $\beta$ (see Proposition 13.2.4 in \cite{forrester2010}) and derived analogous results for rational $\beta$ under a truncation scheme  (see Formula 13.226 in \cite{forrester2010}). Extracting the leading-order behavior, the asymptotics is predicted to be
\begin{equation}\label{asympforrester}
	\rho^{(2)}_T(r) \sim
	\begin{cases}
		-\dfrac{1}{\pi^2 \beta r^2} & \text{for } \beta < 2\, , \\[15pt]
		-\dfrac{1}{2 \pi^2 r^2} + \dfrac{\cos r}{2 \pi^2  r^2} & \text{for } \beta = 2\, , \\[15pt]
		a_1 \dfrac{\cos r}{r^{4/\beta}} & \text{for } \beta > 2 \,,
	\end{cases}
\end{equation}
with an explicitly given coefficient $a_1$.

	\medskip

	Our Theorem \ref{theo:main} specifies that the decay rate decreases for large $\beta$ as $c/\beta$. This behavior is not surprising as the Sine$_\beta$ point process crystallizes towards the ``Picket Fence'' configuration $U + 2 \pi \mathbb{Z}$ (where $U$ is uniform on $[0,2\pi]$) as $\beta \to \infty$. Conversely, as $\beta$ tends to $0$, $\text{Sine}_\beta$ converges to a Poisson point process (see \cite{allez_sine_2014} and \cite{erbar2018}), for which the truncated correlation functions vanish. Here from the Forrester/Haldane conjecture, one expects that the higher coefficient is equal to $2$ but we obtain only some exponent of order $O(\beta)$.
	
Note that when $\beta = 2$, the $\text{Sine}_\beta$ process has a determinantal structure and
	$$\rho^{(k)}(x_1,\cdots,x_k)=\det\left(K(x_i,x_j)\right)_{1\leq i,j\leq k}, \qquad K(x,y)\defeq\frac{\sin((x-y)/2)}{\pi|x-y|}. $$
	In particular, the two-points truncated function $\rho^{(2)}_T$ admits the explicit formula
	$$\rho^{(2)}_T(r)=-\frac{\sin^2(r/2)}{\pi^2 r^2}.$$
	A  \textit{Berezinskii--Kosterlitz--Thouless} (BKT) phase transition is expected at $\beta=2$ (see \cite[Remark 41]{lewin_coulomb_2022})
	and this critical value is conjectured to be at the junction in between the universal decay $|x-y|^{-2}$ and oscillatory $\beta$-dependent decay, as seen in the Forrester--Haldane conjecture \eqref{asympforrester}.
	
	\medskip
	
		Very recently, a stronger version of Theorem \ref{theo:main} was obtained by Qu and Valk\'o in a preprint \cite{qu2025} that appeared on ArXiv while we were finishing the present paper. They derived the exact first order asymptotics (equal to $r^{-4/\beta}$) for $\beta \geq 4$, and obtained non-optimal bounds for small values of $\beta$, similar to our own.  However, our methodology differs significantly from theirs. Their approach relies on an exact correspondence between the two points function of Sine$_\beta$ and the one-point function of the Hua Pickrell process with parameters $(\beta, \beta/2)$ (which describes Sine$_\beta$ conditioned to have a point at the origin). In contrast, our strategy involves a probabilistic alternative to these algebraic identities. This will allow us to treat the higher $k$-point correlation functions for $k \geq 2$.

	\medskip
	
	Even more recently, Assiotis and Najnudel \cite{AssiotisNajnudel2026} established an exact identity of the $k$-point correlation function in terms of some moments of the Hua-Pickrell stochastic zeta function (defined by Li and Valk\'o in \cite{LiValko2022}) with parameters $(\beta, k\beta/2)$. While exact, extracting the large separation asymptotics from this identity seems to remain highly non-trivial.

	\subsubsection*{General $k$-point correlation function}  
	Our main result generalizes Theorem \ref{theo:main} to arbitrary $k$-point correlation functions. Let us fix $k \geq 2$ and $1 \leq k_0 < k$. We consider the \textit{partially truncated functions} associated with two given clusters
	$$\rho^{(k)}(x_1,\cdots,x_k)-\rho^{(k_0)}(x_1,\cdots,x_{k_0})\rho^{(k-k_0)}(x_{k_0+1},\cdots, x_k)\, , $$
	for $x_1,\cdots,x_k \in \R$.
	Heuristically, this quantity measures the ``correlation'' between the first cluster $(x_1,\cdots,x_{k_0})$ and the second cluster $(x_{k_0+1},\cdots, x_k)$. It would vanish for a Poisson point process. For any scalar $r\in\mathbb{R}$ and $\mathbf{x} = (x_1,\cdots,x_d)\in\mathbb{R}^d$, we use the notation $\mathbf{x}+r$ for the translated vector $(x_1+r,\cdots,x_d+r)$. In the following result, we keep track of how the pre-factor to the decay rate depends on the parameter $k$ and the cluster size.
	\begin{Th}[Decay of partially truncated $k$-point correlations]\label{theo:main2}
		There exist an absolute positive constant $c>0$ and, for any $\beta >0$ and $\lambda_0 >0$, a constant $C=C(\beta,\lambda_0)>0$ such that the following holds.
		Fix $k \geq 2$ and $1 \leq k_0 < k$. 
		Let $\mathcal{I}_1 \subset (\mathbb{R}^-)^{k_0}$ and $\mathcal{I}_2 \subset (\mathbb{R}^+)^{k-k_0}$ be two products of disjoint intervals where each interval has length at most $\lambda_0$. Let $L$ be the diameter of the union of the intervals composing $\mathcal{I}_1$ and $\mathcal{I}_2$. Then, we have 	for all $r \geq 1$,
		\begin{align}\Big|\int_{\mathcal{I}_1}\int_{\mathcal{I}_2} \big[\rho^{(k)}(\mathbf{x},\mathbf{y}+r)-\rho^{(k_0)}(\mathbf{x})\rho^{(k-k_0)}(\mathbf{y}+r)\big]\;\mathrm{d}\mathbf{x}\,\mathrm{d}\mathbf{y}\Big|
			\leq \, K(k,L)
			\; r^{-c\frac{\beta}{1+\beta^2}}, \label{twoclusterbound}
		\end{align}
		where the pre-factor $K(k,L)$ depends on $k, L$ and the constant $C$ as follows:
		\begin{align*}
			K(k,L)= \min(C k^{3} e^{C L}, C^k k^{k/2} )\,.
		\end{align*}
	\end{Th}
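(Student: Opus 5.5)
Since the intervals in $\mathcal{I}_1$ (resp. $\mathcal{I}_2$) are disjoint, I will rewrite the integrated correlation functions as mixed moments of counting variables. Write $\mathcal{I}_1=\prod_{i\le k_0} A_i$ and $\mathcal{I}_2=\prod_{j>k_0}B_j$. Then the left-hand side of \eqref{twoclusterbound} equals
\begin{align*}
\mathrm{Cov}\Big(\prod_{i\le k_0}\mathrm{Sine}_\beta(A_i)\,,\ \prod_{j>k_0}\mathrm{Sine}_\beta(B_j+r)\Big).
\end{align*}
Through the Brownian carousel, I express $\mathrm{Sine}_\beta$ counts in intervals as increments of $\alpha_\lambda(\infty)/2\pi$. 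Here $X=\prod_i \mathrm{Sine}_\beta(A_i)$ depends on the phase functions $\alpha_\lambda$ for $\lambda$ in a window of size $L$ left of $0$, and $Y$ on those in a window near $r$.

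The key step is a decorrelation estimate obtained by coupling. The phase functions $\alpha_{\lambda}$ for $\lambda$ near $0$ and $\lambda$ near $r$ are driven by the same complex Brownian motion, but in the carousel time parametrization ($t\mapsto$ hyperbolic time, with drift $\lambda\,\frac{\beta}{4}e^{-\beta t/4}$) the two families rotate at relative speed $\approx r e^{-\beta t/4}$. Hence, modulo $2\pi$, the relative angle of the two groups is well mixed until time $t_r\approx \frac{4}{\beta}\log r$, after which the drift difference becomes $O(1)$. I would then build a coupling: run an independent copy of the second group (driven by an independent Brownian motion, or, by translation invariance, a rotated copy). I would show that the relative phase of the two groups is asymptotically uniform and independent of everything else at time $t_r$, with total variation error $r^{-c\beta/(1+\beta^2)}$. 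The exponent arises from a spectral-gap/mixing estimate for the relative-angle diffusion on the circle. Its noise strength is $\propto 1/\beta$ per unit of $t$ relative to the rotation, which yields a rate of order $\beta/(1+\beta^2)$ times $\log r$ after converting. After that time the two groups evolve with interactions whose total influence on the final counts is bounded, so conditioning on the early $\sigma$-field and the uniformized relative angle factorizes $\E[XY]\approx \E[X]\E[Y]$.

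To convert the TV bound into a covariance bound, I use Cauchy--Schwarz/Hölder with a coupling event $G$ of probability $1-O(r^{-\gamma})$:
\begin{align*}
|\mathrm{Cov}(X,Y)|\le \|X\|_{4}\|Y\|_4\,\P(G^c)^{1/2}+\text{(errors)}.
\end{align*}
The prefactor $K(k,L)$ comes from moment bounds on products of counts. Either I use exponential moments of $\mathrm{Sine}_\beta$ counts on an interval of length $L$ (tail $e^{-cn^2}$ from large deviations/overcrowding, giving $Ck^3e^{CL}$ via $\E[\prod N_i^{p}]\le\E[N^{kp}]$ with an exponential-moment estimate), or I use Hölder across $k$ disjoint intervals, each of length $\le\lambda_0$, with Gaussian-type tails giving $k$th moments $\le C^k k^{k/2}$. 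Halving the exponent is absorbed into $c$.

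The main obstacle is the coupling step. I need to quantify, uniformly in the initial configuration of $k$ coupled phases, that the relative rotation of the two clusters mixes on the circle at polynomial rate in $r$ with the stated $\beta$-dependence. I also need to control that the ``lag'' of phases near $r$ (which in the carousel are still rapidly moving at early times) does not destroy independence. I would first try a reflection/mirror coupling of the two Brownian angle components on the hyperbolic disk, with a Lyapunov argument showing coalescence before $t_r$ with failure probability $e^{-c\beta t_r/(1+\beta^2)\cdot \beta/4}$.
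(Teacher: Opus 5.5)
Your reduction of the left-hand side to $\mathrm{Cov}\big(\prod_i\mathrm{Sine}_\beta(A_i),\prod_j\mathrm{Sine}_\beta(B_j+r)\big)$, the carousel representation, and the H\"older step turning a decoupling error into a covariance bound all match the paper. The decoupling step itself, however, has a genuine gap.

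You decouple at $t_r\approx\frac{4}{\beta}\log r$ and assert that the later interaction has only ``bounded'' influence, so that conditioning on the early $\sigma$-field and a uniformized relative angle factorizes $\E[XY]$. Bounded is not small. After any cutoff, both clusters are driven by the same $W$ up to the rotation $e^{-i\alpha_r}$, which no longer oscillates fast. Since $\alpha_r(t)$ converges to a multiple of $2\pi$, eventually $\mathrm{d}W_r\approx\mathrm{d}W$; even a frozen uniform angle $\theta$ gives the pair $(W,e^{-i\theta}W)$, which is far from independent. So nothing factorizes unless the counts are essentially determined at the cutoff.

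The missing ingredient is exactly a quantitative freezing estimate, Lemma~\ref{lem:finitetime}: $\P(|\alpha_\lambda(T)-\alpha_\lambda(\infty)|>\pi/2)\le Ce^{-(1\wedge\beta)T/128}$. The paper proves it through $R=\ln\tan(\{\alpha_\lambda\}_{2\pi}/4)$, using Girsanov for fast escape from the unstable level $\pi$ and a comparison argument for attraction to $2\pi\mathbb{Z}$.

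Moreover, $t_r$ is the wrong cutoff. There the rotation speed $r\frac{\beta}{4}e^{-\beta t/4}$ is $O(1)$, so the increments of $W$ and $W_r$ are correlated at order one and the bound $C(1+\beta^{-1})e^{\beta t/4}/r$ is useless. Uniformity of the relative angle at a single time does not make the two clusters' paths independent. The paper instead stops at $T=\frac{4}{41+\beta}\log r<T_r$ and balances three errors: decoupling of order $k^3n^{5/2}e^{\beta T/4}/r$, Euler discretization $e^{4T}/n$, and freezing $e^{-(1\wedge\beta)T/128}$.

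That balance, not a spectral gap of a relative-angle diffusion, produces the exponent. The factor $1\wedge\beta$ from freezing gives the small-$\beta$ behaviour, and $T<T_r$ gives the $1/\beta$ at large $\beta$. The decorrelation itself comes from the deterministic fast phase $rF(t)$ via an It\^o integration by parts, not from mixing.

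Three further points.

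\emph{Discretization is forced.} Any total-variation comparison must be made on finitely many increments. On path space, the law of $(W,W_r)$ is singular with respect to that of an independent pair, because $\langle W,\overline{W_r}\rangle_t=2\int_0^te^{i\alpha_r(u)}\,\mathrm{d}u$ has derivative of modulus $2$. Decorrelation only appears after averaging over time steps much longer than the rotation period. This is why the paper passes through an $n$-step Euler scheme (Lemma~\ref{lem:discrete}) and a Hellinger computation for Gaussian vectors.

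\emph{Uniformity in $k$.} For general $k$, that computation involves inverses of the within-cluster covariance matrices, which can be nearly singular. The paper therefore regularizes them spectrally, replacing modes with eigenvalue below $(2kn^2)^{-1}$ by independent noise (Lemma~\ref{lem:kpoints}). This is where the factor $k^3$ in $K(k,L)$ comes from, and your mirror-coupling/Lyapunov sketch gives no such uniformity in $k$.

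\emph{Moment bound.} The inequality $\E[\prod_iN_i^p]\le\E[N^{kp}]$ grows with $k$ and cannot yield $Ce^{CL}$. The paper instead uses $\prod_iN_i\le\exp(\sum_iN_i-k_0)\le\exp(\mathrm{Sine}_\beta([-L,0]))$ together with the overcrowding estimate to get $\E[\cP_1^4]\le Ce^{CL}$ uniformly in $k$.
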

	\vspace{0,2cm}

	\begin{Rem}     	
		The clusters $\mathcal{I}_1 : = \prod_{i=1}^{k_0} I_i$ and $\mathcal{I}_2 : = \prod_{i=k_0 + 1}^{k} I_i$ are chosen in $\R_-$ (resp. $\R_+$) so that the distance between $\cup_{i \leq k_0} I_i$ composing $\mathcal{I}_1$ and $\cup_{i=k_0 +1}^k (I_i + r)$ composing the shifted by $r$ version of $\mathcal{I}_2$ is at least $r$.
		Due to the translation invariance of the correlation functions, the same result holds for any shifted domains $\mathcal{I}_1 + t : = \prod_{i=1}^{k_0} (I_i + t)$ and $\mathcal{I}_2 + t : = \prod_{i=k_0 + 1}^{k} (I_i + t)$ with $t\in\mathbb{R}$.
		The most common pair-clustering scenario corresponds to sending a single particle far away that is $k-k_0=1$. Recalling that $\rho^{(1)} \equiv (2\pi)^{-1}$, the bound \eqref{twoclusterbound} becomes
		\begin{equation}\label{eq: partially truncated}
			\left|\int_{\mathcal{I}_1}\int_{0}^{\lambda} \left( \rho^{(k)}(\mathbf{x}, y+r) - \frac{\rho^{(k-1)}(\mathbf{x})}{2\pi} \right) \;\mathrm{d}\mathbf{x}\,\mathrm{d}y\right| \leq K(k, L)\; r^{-c\frac{\beta}{1+\beta^2}}.
		\end{equation}
	\end{Rem}

	\medskip
	
	\begin{Rem}     	
		Let us analyze the behavior of the pre-factor $K(k, L)$ in different regimes of $k$ and $L$. On one hand, if $k$ is bounded, $K(k, L)$ is uniformly bounded in $L$. On the other hand, for $k$ large, the  situation goes as follows.
		\begin{enumerate}
			\item \textit{Small cluster size: $L$ is fixed or $L = O(\ln k)$}.
			In this case, our bound grows polynomially in $k$. Note that fitting $k$ disjoint intervals into a domain of size $O(\ln k)$ implies that the average interval length is $O(\frac{\ln k}{k})$. This corresponds to a rare ``overcrowding'' event.
			\item \textit{Intermediate cluster size: $\ln k \ll L \ll k \ln k$}.
			In this regime, our bound grows exponentially with $L$. The case where $L$ is of order $k$ represents the typical scenario for the point process (where each point lies in an interval of size $O(1)$).
			\item \textit{Arbitrarily large cluster size: $L \gg k \ln k$.}
			Here, the bound is dominated by the second term, yielding a sub-factorial growth of $k^{k/2}$.
			It is instructive to compare this growth with that of the moments of the number of points inside the intervals. By Hölder's inequality, for any disjoint intervals $I_i$ of length bounded by $\gl_0$,
			$$
			\int_{I_1\times\cdots\times I_k} \rho^{(k)}(\mathbf{x})\mathrm{d}\mathbf{x} = \mathbb{E}\Big[\prod_{i=1}^{k}\mbox{Sine}_\beta(I_i)\Big] \leq \mathbb{E}\left[(\mbox{Sine}_\beta([0,\lambda_0]))^k\right].
			$$
			As shown in Proposition \ref{overcrowdingTh}, the random variable $\mbox{Sine}_\beta([0,\lambda_0])$ is sub-Gaussian, hence its moments satisfy $\mathbb{E}[(\mbox{Sine}_\beta([0,\lambda_0])^k] \leq C^k k^{k/2}$.
			Since the uniform bounds $K(k,\infty)$ satisfy the same growth rate, they can be viewed as optimal in this sense.
		\end{enumerate}
	\end{Rem}
	\vspace{0,2cm}

	The previous results directly imply similar bounds for the $k$-point truncated correlation functions. These are defined recursively as follows: $\rho_T^{(1)}(x_1) = \rho^{(1)}(x_1)$ and
	\begin{align}\label{cumulants equivalent}
		\rho_T^{(k)}(x_1,\cdots,x_k)=\rho^{(k)}(x_1,\cdots,x_k)-\sum_{\pi = (J_1, \cdots, J_j),\ \pi \neq \{1,\cdots,k\}}\rho^{(|J_1|)}_T(X_{J_1})\cdots\rho_T^{(|J_j|)}(X_{J_j})\,,
	\end{align}
	where the sum runs over all the partitions $\pi$ of $\{1,\cdots,k\}$ except the trivial one $\{1,\cdots,k\}$, and $X_{(i_1,\cdots,i_j)}:=(x_{i_1},\cdots,x_{i_j})$. In the context of point processes, correlation functions act as the natural analogs of moments of a random variable, while truncated correlation functions correspond to their cumulants. Indeed, the $k$-point truncated correlation function describes only the $k$-body correlations, by factoring out the correlations from smaller subsets.

	From Theorem \ref{theo:main2}, one can obtain as well the decay of the $k$-point truncated correlation functions, upon changing the pre-factor $K(k,L)$:
	\begin{Th}[Truncated $k$-point correlation decay]\label{cor:truncated} 
		There exist an absolute positive constant $c>0$ and for any $\beta >0$ and $\lambda_0 >0$, a constant $C=C(\beta,\lambda_0)>0$ such that the following holds.
		Let us fix $k \geq 2$ and take $(I_i)_{1 \leq i\leq k}$ pairwise disjoint intervals of size atmost $\lambda_0$. We have:
		\begin{equation*}\label{cor:truncated}
			\Big|\int_{I_1\times\cdots\times I_k}\rho^{(k)}_T(x_1,\cdots,x_k)\,\mathrm{d}x_1\cdots\mathrm{d}x_k\,\Big|\leq  K_2(k)\Big(\max_{S_1 \sqcup S_2 = \{1,\cdots, k\}}\mathrm{dist}\big(\underset{i \in S_1}{\cup}  I_i, \underset{i \in S_2}{\cup} I_i\big) \Big)^{-c\frac{ \beta}{1+\beta^2}},
		\end{equation*}
		where  $K_2(k) = C^k \, k! \, k^{k/2}$.
	\end{Th}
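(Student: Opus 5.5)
We deduce Theorem \ref{cor:truncated} from Theorem \ref{theo:main2} through a purely combinatorial argument on cumulants, with no further probabilistic input. First fix a bipartition $S_1\sqcup S_2=\{1,\dots,k\}$ that attains the maximal distance $D:=\mathrm{dist}(\cup_{i\in S_1}I_i,\cup_{i\in S_2}I_i)$. We may assume $D\geq 1$; if $D<1$, the claim follows from the trivial bound on $\int|\rho^{(k)}_T|$ obtained from the cumulant expansion and the moment bounds below. Throughout, we use the standard cumulant factorisation property. If one sets $G(A):=\rho^{(|A|)}(X_A)$ for $A\subset\{1,\dots,k\}$, then Möbius inversion of \eqref{cumulants equivalent} gives
\[
\rho_T^{(k)}=\sum_{\pi}\mu(\pi)\prod_{B\in\pi}G(B),\qquad \mu(\pi)=(-1)^{|\pi|-1}(|\pi|-1)!,
\]
and whenever $G(A)=G(A\cap S_1)G(A\cap S_2)$ for all $A$, this sum vanishes identically. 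So we rewrite $\rho^{(k)}_T$ as the difference between the true expansion and the expansion in which each $G(B)$ is replaced by the factorised $G(B\cap S_1)G(B\cap S_2)$. Expanding the difference of products by telescoping, every term is a product of correlation functions in which exactly one factor is a partially truncated difference $G(B)-G(B\cap S_1)G(B\cap S_2)$, where $B$ meets both $S_1$ and $S_2$.

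Next we integrate over $I_1\times\dots\times I_k$. Since the sets $I_i$ are disjoint intervals, the integral of each term factorises over the blocks, because the blocks involve disjoint sets of variables. The block carrying the difference is handled by Theorem \ref{theo:main2}, applied to the two clusters $B\cap S_1$ and $B\cap S_2$. We translate so that the first cluster lies in $\R_-$ and the second lies in $\R_+ + r$ with $r\geq D$. This step needs the clusters to be separated by a point, that is, the intervals of $S_1$ lying to the left of those of $S_2$.

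This separation is the step I expect to be the main obstacle. The two clusters may interlace on the line, so "distance $\geq D$" does not immediately give a left/right split. To handle it, I would re-choose the bipartition: order the intervals along $\R$, and note that the largest gap between consecutive intervals is at least the distance $D'$ of any bipartition divided appropriately. In fact, for an interlaced bipartition the distance is at most the smallest consecutive gap. So the maximising bipartition can be taken to be a left/right split at the largest gap, which puts us exactly in the setting of Theorem \ref{theo:main2}. Using the second form $C^kk^{k/2}$ of $K(k,L)$ then gives a bound uniform in $L$.

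The remaining blocks are bounded by integrals of $\rho^{(|B|)}$, which are dominated by $\E[\mathrm{Sine}_\beta([0,\lambda_0])^{|B|}]\leq C^{|B|}|B|^{|B|/2}$ through Hölder's inequality and Proposition \ref{overcrowdingTh}. The product of these bounds over a partition is at most $C^kk^{k/2}$. Finally we count terms. We sum $|\mu(\pi)|$ over partitions, with at most $|\pi|\leq k$ telescoping terms each and a factor $2$ for the split. This gives $\sum_\pi(|\pi|-1)!\,|\pi|$, which is bounded by $C^kk!$ because of $\sum_j S(k,j)\,j!\leq C^k k!$ (Fubini numbers). Multiplying the combinatorial factor by the moment factor $C^kk^{k/2}$ yields the prefactor $K_2(k)=C^kk!\,k^{k/2}$ and the decay $D^{-c\beta/(1+\beta^2)}$.
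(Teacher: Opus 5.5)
Your proposal is correct, and it takes a different route from the paper.

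\textbf{The paper's route.} The paper works throughout with truncated functions. It expands the partially truncated difference as a sum over partitions having a block that meets both $S_1$ and $S_2$. It then isolates $\rho^{(k)}_T$ as the trivial partition. Finally it closes the recursion $u_k\le\mathcal E_k+\sum_{s}\mathcal S(k_0,k-k_0,s)\,u_s\,G_{k-s}$ by induction on $k$, starting from the a priori bound $u_k\le \mathrm{Bell}(k)\,G_k$.

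\textbf{Your route.} You subtract the identically vanishing cumulant of the split-factorised moment function and telescope. Every term then carries exactly one partially truncated factor, for a single crossing block $B$, multiplied by nonnegative correlation functions. After integration over the product domain, each term is a signed integral controlled by one application of Theorem~\ref{theo:main2} to $(B\cap S_1,B\cap S_2)$, times moments. The combinatorics collapses to a single Fubini number.

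\textbf{What your route buys.} You get the same prefactor $C^k k!\,k^{k/2}$ without any induction. You also keep the absolute value outside the integral, which is exactly what Theorem~\ref{theo:main2} bounds. By contrast, the paper's $\mathcal E_k$, and hence its induction on $u_k$, is written with $|\cdot|$ inside the integral. Theorem~\ref{theo:main2} does not control that quantity as stated, and the paper's argument needs your block-factorisation observation to be made rigorous. Finally, you make explicit the reduction to a left/right split, which Theorem~\ref{theo:main2} requires and which the paper uses tacitly.

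\textbf{One correction.} Your conclusion in that reduction is right, but one supporting sentence is false. It is not true that an interlaced bipartition has distance at most the smallest consecutive gap. For example, take consecutive gaps $100,1,100$, with the two outer intervals in $S_1$ and the two inner ones in $S_2$: the distance is $100$, while the smallest gap is $1$.

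The correct one-line argument is as follows. Order the intervals along $\R$. For any bipartition, some consecutive pair lies in different parts, so the distance is at most that gap, hence at most the largest consecutive gap $g^\ast$. The split at that gap has distance exactly $g^\ast$, so a left/right split is always a maximiser. With this fix, the rest of your argument goes through as written; the extra factor $2$ in your count is unnecessary but harmless.
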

	This theorem is a direct consequence of Theorem \ref{theo:main2}. We refer to Section \ref{subsec:correlations} for a proof.
	
	\medskip
	
	Let us briefly discuss the implications of this last result. Informally, it establishes the asymptotic independence of regions which are spatially far. 
	For general point processes, following \cite{lanford1969observables}, the set of solutions to the DLR equations can be represented as convex combinations of extremal Gibbs states. Consequently, showing uniqueness is equivalent to proving that there is a unique extremal state. Following the framework established by Georgii \cite{georgii1976}, these extremal states are characterized by a trivial tail $\sigma$-algebra. When the interactions are short-range, this property is equivalent to the decay of partially truncated correlations. Dereudre et al. \cite{dereudre_dlr_2021} proved that the Sine$_\beta$ process satisfies the DLR equations, but uniqueness among translation-invariant solutions remains open, except in the special case $\beta=2$. Our result on the decay of truncated correlations therefore strongly suggests that Sine$_\beta$ is an extremal state for the DLR equations, and that it should be their unique solution. This strongly relates to the uniqueness result of Erbar, Huesmann, and Leblé \cite{erbar2018}, who proved that Sine$_\beta$ is the unique minimizer of the infinite-volume free energy among translation-invariant point processes.

	\subsection*{Acknowledgments}
	We would like to thank Benedek Valk\'o and Yahui Qu for useful discussions. Special thanks are due to Mathieu Lewin for his careful reading of a previous version of this article and for his numerous helpful suggestions during its preparation. L.D. acknowledges the support of ANR RANDOP ANR-24-CE40-3377 and LOCAL ANR-22-CE40-0012.

	\section{Strategy of proof and proof of the theorems}
	\subsection{The \textit{stochastic sine equations}}\label{subsec:carousel} In \cite{valko_continuum_2009}, Valk\'o and Vir\'ag give a description of the point process $\text{Sine}_\beta$ in terms of a family $(\alpha_\lambda)_{\lambda\in\mathbb{R}}$ of one-dimensional diffusion processes called the \textit{stochastic sine equations}, which describes the evolution of the hyperbolic angle determined by the Brownian carousel of parameter $\lambda$, its driving Brownian motion and its starting point, see Section 2.1 of \cite{valko_continuum_2009}. In particular, the family $(\alpha_\gl)_\gl$ satisfies the \emph{stochastic sine equation}:
	\begin{equation}\label{sine eq}
		\mathrm{d}\alpha_\lambda(t)=\lambda\frac{\beta}{4}e^{-\frac{\beta}{4}t}\mathrm{d}t+\text{Re}\left((e^{-i\alpha_\lambda(t)}-1)\mathrm{d}W(t) \right)\,,
	\end{equation}
	with initial condition $\alpha_\lambda(0)=0$ and where $(W(t))_{t\geq 0}$ is a standard complex Brownian motion.  In the latter, we will denote $f(t):=(\beta/4) e^{-\beta t/4}$.
	
	Note that the family $(\alpha_\gl(s),\; s\geq 0,\;\gl \in \R)$ has a unique solution defined in $[0,+\infty)$ such that for all $s \geq 0$, $\gl \mapsto \alpha_\gl(s)$ is continuous.
	
	An important property shared by the processes $\alpha_\lambda$ for $\lambda\in\mathbb{R}$ is that, almost surely, they all converge to multiples of $2\pi$ in the large time limit, see Figure \ref{fig:carousel} for a plot of a simulation of their trajectories for two values of $\gl$. By definition, almost surely, $\gl \mapsto \alpha_\gl(+\infty)$ is right-continuous with left limits. It corresponds to the counting measure of the Sine$_\beta$ point process:
	\begin{equation}
		\big(\,\text{Sine}_\beta\big((0,\lambda]\big) \,\big)_{\lambda \in \R}\overset{(d)}{=}\left(\frac{\alpha_{\lambda}(+\infty)}{2\pi}\right)_{\lambda \in \R}\,.
	\end{equation} 
	
	\begin{Rem}[$\beta = 2$ transition]
		The BKT transition at $\beta=2$ can be hinted in the change in the long-time behavior of the Brownian carousel according to the value of $\beta$:
		Valk\'o and Vir\'ag prove in \cite{valko_continuum_2009} that the diffusion $\alpha_\lambda$ converges from above a.s. if and only if $\beta\leq 2$. 
	\end{Rem}

	Let us recall important properties of the diffusion processes $(\alpha_\lambda)_{\lambda\in\mathbb{R}}$, already established in \cite{valko_continuum_2009}, that will be useful in this paper:
	\begin{enumerate}
		\item For all $\gl, \gl' \in \R$, $\alpha_{\lambda'}-\alpha_{\lambda}$ has the same distribution as $\alpha_{\lambda'-\lambda}$ ;\\
		
		\item For all $t \geq 0$, $\gl \mapsto \alpha_\lambda(t)$ is increasing ;\\
		
		\item For all $\gl \geq 0$, $t \mapsto \lfloor \alpha_\lambda(t) \rfloor_{2\pi}$ is non-decreasing. Here, $\lfloor x\rfloor_{2\pi}=\max(2\pi\mathbb{Z}\cap (-\infty,x])$ ;\\
		
		\item For all $\gl > 0$ and all integers $a,k$, $\displaystyle\mathbb{P}\left(\text{Sine}_\beta[0,\lambda] \geq a\,k\right)\leq 2\left(\frac{\lambda}{2\pi a}\right)^k$.
	\end{enumerate}
	
	Property $(1)$ ensures that the point process $\text{Sine}_\beta$ is invariant under translation. This is easily obtained as the process $\tilde{\alpha}\defeq\alpha_{\lambda'}-\alpha_\lambda$ is the solution of 
	\begin{align*}
		\mathrm{d}\tilde{\alpha}(t)=(\lambda'-\gl) f(t)\mathrm{d}t+\text{Re}\left((e^{-i\tilde{\alpha}(t)}-1)\mathrm{d} \tilde W(t) \right), \qquad \tilde{\alpha}(0)=0\,,
	\end{align*}
	with driving Brownian motion $\tilde W(t) =\int_0^t e^{-i\alpha_{\lambda}(s)} \mathrm{d}W(s)$. Note that we will use this representation for the difference later on.
	
	Property $(4)$ provides a control on the number of points in intervals. In particular, it implies that the $k$ factorial moments satisfy
	\begin{align*}
		\E\big[\text{Sine}_\beta[0,\eps] \times \cdots \times (\text{Sine}_\beta[0,\eps] -k+1)\big] \leq C_k \eps^k\,,
	\end{align*}
	which implies the existence of the $k$-point correlation measures.
	
	In \cite{holcomb_overcrowding_2015}, Holcomb and Valk\'o established precise tail estimates for overcrowding. The following theorem extends their bound to arbitrarily large intervals, providing a uniform estimate that we will use to obtain the pre-factor $K(k,L)$ in Theorem \ref{theo:main2}.
	\begin{Th}[Overcrowding estimate, Holcomb, Valk\'o, adapted from \cite{holcomb_overcrowding_2015}]\label{overcrowdingTh}
		For all $\beta >0$, there exists a constant $C=C(\beta)$ such that for all $\lambda \geq 1$ and all $n\geq C \lambda$,
		\begin{equation}\label{overcrowding}\mathbb{P}\big(\mathrm{Sine}_\beta[0,\gl]\geq n\big)\leq \exp\big(-(\beta/4) n^2\ln(n/\lambda)\big).
		\end{equation}
		
	\end{Th}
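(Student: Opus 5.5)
The plan is to derive \eqref{overcrowding} from the Holcomb--Valk\'o overcrowding estimate, which we take as a black box. That estimate states that for fixed $\lambda$,
\[
\mathbb{P}(\mathrm{Sine}_\beta[0,\lambda]\geq n)=\exp\Big(-\tfrac{\beta}{4}n^2\ln(n/\lambda)+O(n^2)\Big)
\]
as $n\to\infty$. What must be added is uniformity in $\lambda\geq1$ once $n\geq C\lambda$, with the $O(n^2)$ error absorbed into the main term. To do this we go back to the Brownian carousel, i.e.\ to the diffusion \eqref{sine eq}, and redo their argument while tracking the $\lambda$-dependence.

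First, reduce the event to a statement about a single diffusion. By property (3), $\{\alpha_\lambda(\infty)\geq 2\pi n\}$ requires $\alpha_\lambda$ to cross the levels $2\pi,4\pi,\dots$ in turn. The crossings after time $T$ cost little drift, since $\lambda\int_T^\infty f=\lambda e^{-\beta T/4}$. Time-change with $s=e^{-\beta t/4}$ so that the drift becomes $\lambda\,\mathrm{d}s$ on $[0,1]$; the noise coefficient $|e^{-i\alpha}-1|$ then picks up the factor $1/s$. Using the strong Markov property, split the event according to the successive hitting times $\tau_j$ of $2\pi j$. The probability of completing the $j$-th winding in an $s$-interval of length $\ell_j$ is bounded by a one-winding estimate of the form $\exp(-c_\beta j\ln(1/\ell_j\ldots))$. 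This estimate comes from comparing $\log\tan(\alpha/4)$ with Brownian motion plus drift, as in Holcomb--Valk\'o's use of the Riccati/Cauchy coordinate.

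Second, optimize over the allocation $\sum \ell_j\leq 1$ subject to the drift budget $\lambda$. The drift can push $\alpha$ forward by at most $\lambda$ in total, so at least $n-C'\lambda$ of the windings must be produced by the noise against an effective confining force growing linearly in $j$. Summing the per-winding costs $\tfrac{\beta}{2}j\ln(j/\lambda)$ over $j\leq n$ gives $\tfrac{\beta}{4}n^2\ln(n/\lambda)-O(n^2)$. The correction is $O(n^2)$ with a constant depending only on $\beta$, and it can be absorbed whenever $\ln(n/\lambda)$ is large. To absorb it we need a slightly sharper leading term, or we use the fact that the claimed bound has an explicit constant $\beta/4$ with no error term. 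The natural route is to prove the bound with $\tfrac{\beta}{4}(1+\delta)$ in front and error $-C_1n^2$. Then $\delta\ln(n/\lambda)\geq C_1$ holds once $n\geq e^{C_1/\delta}\lambda$, and this fixes $C$.

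Third, scaling in $\lambda$: for $\lambda\geq1$, split $[0,\lambda]$ into $\lceil\lambda\rceil$ unit intervals and use stationarity (property (1)) together with a union bound. If the total is at least $n$, some block configuration $(n_1,\dots,n_m)$ with $\sum n_i=n$ occurs. Holcomb--Valk\'o's bounds are for fixed $\lambda$ and do not factor over blocks, so we instead prefer to run the diffusion argument directly with general $\lambda$, where the $\lambda$-dependence enters only through the drift budget. The main obstacle is exactly the uniformity of the one-winding estimate in $\lambda$ and getting the constant in front of $n^2\ln(n/\lambda)$ to be at least $\beta/4$ after absorbing errors. If the sharp constant turns out to be unreachable, we would settle for $c_\beta n^2\ln(n/\lambda)$, which suffices for the sub-Gaussian moment use in the paper.
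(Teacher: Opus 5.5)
There is a genuine gap, and it starts with the black box you quote. The Holcomb--Valk\'o upper bound has leading rate $\beta/2$, not $\beta/4$. For $0<\lambda\le\lambda_0$ and $n\ge 1$,
\[
\mathbb{P}\big(\mathrm{Sine}_\beta[0,\lambda]\geq n\big)\leq \exp\Big(-\tfrac{\beta}{2}n^2\ln(n/\lambda)+c\,n\ln(n+1)\ln(n/\lambda)+c\,n^2\Big),\qquad c=c(\beta,\lambda_0).
\]
This matches the log-gas heuristic: $\binom{n}{2}$ pairs compressed by a factor of order $\lambda/n$ cost about $\beta\binom{n}{2}\ln(n/\lambda)$, and for $\beta=2$ this is the sine-kernel rate $n^2\ln n$. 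The $\beta/4$ in the statement is deliberately half of the true rate, and the other half absorbs the errors. As soon as $n\geq C(\beta)\lambda$ with $\lambda\geq 1$, one has $c\,n\ln(n+1)\ln(n/\lambda)+c\,n^2\leq\frac{\beta}{4}n^2\ln(n/\lambda)$. The only substantive point is therefore that $c$ does not grow with $\lambda$ in the regime $\lambda\geq 1$, $n\geq C\lambda$. The paper obtains this by inspecting Holcomb--Valk\'o's recursion, whose constants depend only on $\beta$. Your proposal never isolates this uniformity statement, which is the actual content of the adaptation.

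Because you treat $\beta/4$ as the sharp rate, your plan has no slack, and step two as written cannot give the claim. Per-winding costs $\frac{\beta}{2}j\ln(j/\lambda)$ sum to $\frac{\beta}{4}n^2\ln(n/\lambda)-\frac{\beta}{8}n^2+o(n^2)$. That is, you get the bound $\exp\big(-\frac{\beta}{4}n^2\ln(n/\lambda)+\frac{\beta}{8}n^2+o(n^2)\big)$, which does not imply the claimed bound for any choice of $C$.

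Your proposed repair, a leading constant $\frac{\beta}{4}(1+\delta)$, has no source in your sketch. To reach the true rate $\frac{\beta}{2}n^2\ln(n/\lambda)$ by summation, the $j$-th winding must cost about $\beta j\ln(j/\lambda)$, twice what you wrote. The fallback with an unspecified $c_\beta$ would suffice for the moment bounds used later in the paper, but it proves a weaker statement than the one claimed.

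Finally, the one-winding estimate and the ``drift budget'' argument are left unspecified. So even with the right constant, the uniformity in $\lambda$ would be asserted rather than shown.
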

\begin{proof}[Proof sketch]
The authors show in \cite{holcomb_overcrowding_2015} that there exists a positive constant $c=c(\beta,\gl_0)$ such that for all $0<\lambda \leq \gl_0$ and for $n\geq 1$, one has
	$$\mathbb{P}\big(\mathrm{Sine}_\beta[0,\gl]\geq n\big)\leq e^{-(\beta/2) n^2\ln(n/\lambda)+c n\ln(n+1)\ln(n/\lambda)+c n^2}.$$
	One has to track how the constant $c$ in their formula depends on $\gl_0$. Looking at their proof of the upper bound, we see that their argument remains valid for arbitrarily large $\gl \geq 1$ and for $n$ large enough with respect to $\gl$, since the constants involved in their recursion depend only on $\beta$. Moreover, when $n \geq C(\beta) \gl$, the inequality 
$$c n \ln (n+1) \ln (n/\gl) + c n^2 -(\beta/4) n^2 \ln (n/\gl) \leq 0$$ holds as long as the constant $C(\beta)$ is chosen large enough. This implies the result.
	\end{proof}

			\begin{figure}[h!]
		\includegraphics[width=8cm]{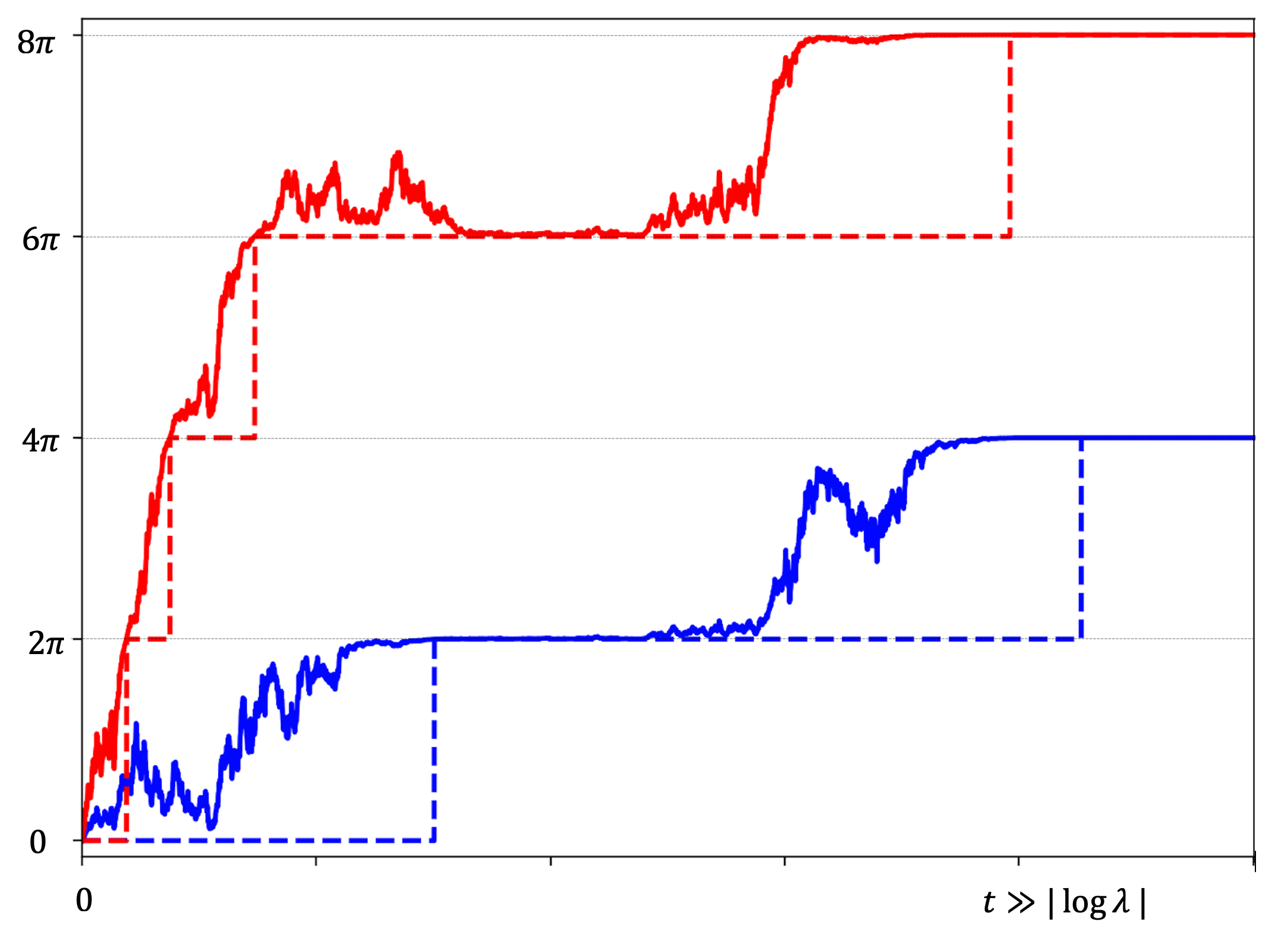}
		\caption{\small{(color online). Trajectories of the diffusions $\alpha_6$ (blue) and $\alpha_{20}$ (red) for $\beta=1$. This realization of $\text{Sine}_1$ has two points in $[0,6]$ and 4 points in $[0,20]$. Remark that the $\alpha_\lambda$ can not go below a level $2k\pi$ once it has been crossed, and that the dynamics freeze after times much larger than $|\log\lambda|$.}}\label{fig:carousel}
	\end{figure}
	
	\vspace{0,3cm}
	Let us now rephrase Theorem \ref{theo:main2} with the stochastic sine equations. Let us fix the parameters $\beta$, $\gl_0$, $k$. Define $J_1 := \{1,\cdots,k_0\}$ and $J_2 := \{k_0+1,\cdots,k\}$ and the two products:
	\begin{align}\label{defclusters}
		\mathcal{I}_1 := \prod_{i \in J_1} I_i, \quad\mbox{and}\quad  \mathcal{I}_2 := \prod_{i \in J_2} I_i,\qquad \mbox{where }I_i :=[x_i, x_i + \gl_i]\,.
	\end{align}
Moreover, the interval lengths satisfy $\gl_i\leq \lambda_0$. In the first cluster $1 \leq i \leq k_0$, we choose $x_i$ so that the intervals $I_i$ are pairwise disjoint and included in $\R^-$. In the second cluster $k_0 +1 \leq i \leq k$, we choose $x_i$ such that the intervals are all disjoint and included in $\R^+$.
	
	\medskip
	
	We define
	\begin{align*}
		\alpha^{(i)}(t) \defeq \alpha_{x_i +\lambda_i}(t)-\alpha_{x_i}(t)\,, \qquad    i \in J_1\,,
	\end{align*}
	and 
	\begin{align*}
		\alpha^{(i)}(t) \defeq \alpha_{r +x_i + \gl_i}(t)-\alpha_{r+x_i}(t)\qquad i \in J_2\,.
	\end{align*}	
	By definition, we have
	\begin{equation}
		\Big(\,\text{Sine}_\beta(I_{i_1}),\; i_1\in J_1, \quad \text{Sine}_\beta(r + I_{i_2}),\; i_2\in J_2\Big) \overset{(d)}{=}  \Big(\, \frac{\alpha^{(i)}(+\infty)}{2\pi},\; i \in \{1,\cdots,k\}\Big)\,.
	\end{equation}
	Correlations are encoded in the coupling between the two families of diffusions $i \in J_1$ and $i \in J_2$. As we will observe later, this coupling strongly depends on the diffusion $\alpha_r$
	\begin{align}
		\label{eqalphar}    \mathrm{d}\alpha_r(t) \ \ &=r\,f(t) \mathrm{d}t+ \text{Re}\left((e^{-i\alpha_r(t)}-1)\,\mathrm{d}W(t) \right)\,,
	\end{align}
	and the complex Brownian motion
	\begin{align*}
		W_r(t) := \int_0^t e^{-i\alpha_r(s)} \mathrm{d}W(s)\,.
	\end{align*}
	The diffusions $\alpha^{(i)}$ for $i \in J_1$ are given by
	\begin{align*}
		\mathrm{d}\alpha^{(i)}(t) = \gl_i f(t) \mathrm{d}t + \mbox{Re}\Big((e^{-i \alpha^{(i)}(t) }- 1) \mathrm{d}W^{(i)}(t)\Big),\qquad \mathrm{d}W^{(i)}(t) = e^{- i \alpha_{x_i}(t)} \mathrm{d}W\,.
	\end{align*}
	Similarly, the diffusions $\alpha^{(i)}$ for $i \in J_2$ are given by
	\begin{align*}
		\mathrm{d}\alpha^{(i)}(t) = \gl_i f(t) \mathrm{d}t + \mbox{Re}\Big((e^{-i \alpha^{(i)}(t) }- 1) \mathrm{d}W^{(i)}(t)\Big),\quad \ \mathrm{d}W^{(i)}(t) = e^{- i (\alpha_{x_i+r}(t) - \alpha_r(t))} \mathrm{d}W_r(t)\,.
	\end{align*}
	We denote in the following
	$$\mathbf{W}_1\defeq \big(W^{(1)},\cdots, W^{(k_0)}\big), \qquad \mathbf{W}_2\defeq \big(W^{(k_0+1)},\cdots, W^{(k)}\big).$$

	\subsection{Main steps of the proof of Theorems \ref{theo:main} and \ref{theo:main2}}
	In this section, we explain the main steps of the proof of Theorems \ref{theo:main} and \ref{theo:main2}. We fully state the key lemmas, whose proofs are deferred to the subsequent sections, and we then provide the proof of the Theorems.
	
	Define the time $T_r\defeq(4/\beta)\ln r$. This is the typical time at which our system of diffusions goes from an independent active dynamics to a dependent frozen dynamics. Namely,
	\begin{itemize}
		\item \emph{For times smaller than $T_r$} and for large values of the parameter $r$, the diffusion $\alpha_r$ defined in \eqref{eqalphar} has a strong drift and behaves almost deterministically. It follows the deterministic path given by $ t \mapsto r \int_0^t f(s) ds = r(1- e^{-\beta t/4})$. As a consequence, the complex Brownian motion $W$ is close to being independent of $W_r$. We obtain a good control of this phenomenon thanks to a discretization of the time interval.
		
		\item \emph{For times greater than $T_r$}, the diffusion $\alpha_r$ becomes more unpredictable, which effectively creates correlations. However, its impact on the correlations is controlled using that the diffusions $(\alpha^{(i)}, \; i=1,\cdots, k)$ have a slow dynamic after such time since their drift term is small.
	\end{itemize}
	Figure \ref{fig:oscillations} represents a plot of the real part of $t\mapsto e^{-i\alpha_r(t)}$ and illustrates this transition.
	\begin{figure}
		\includegraphics[width=11cm]{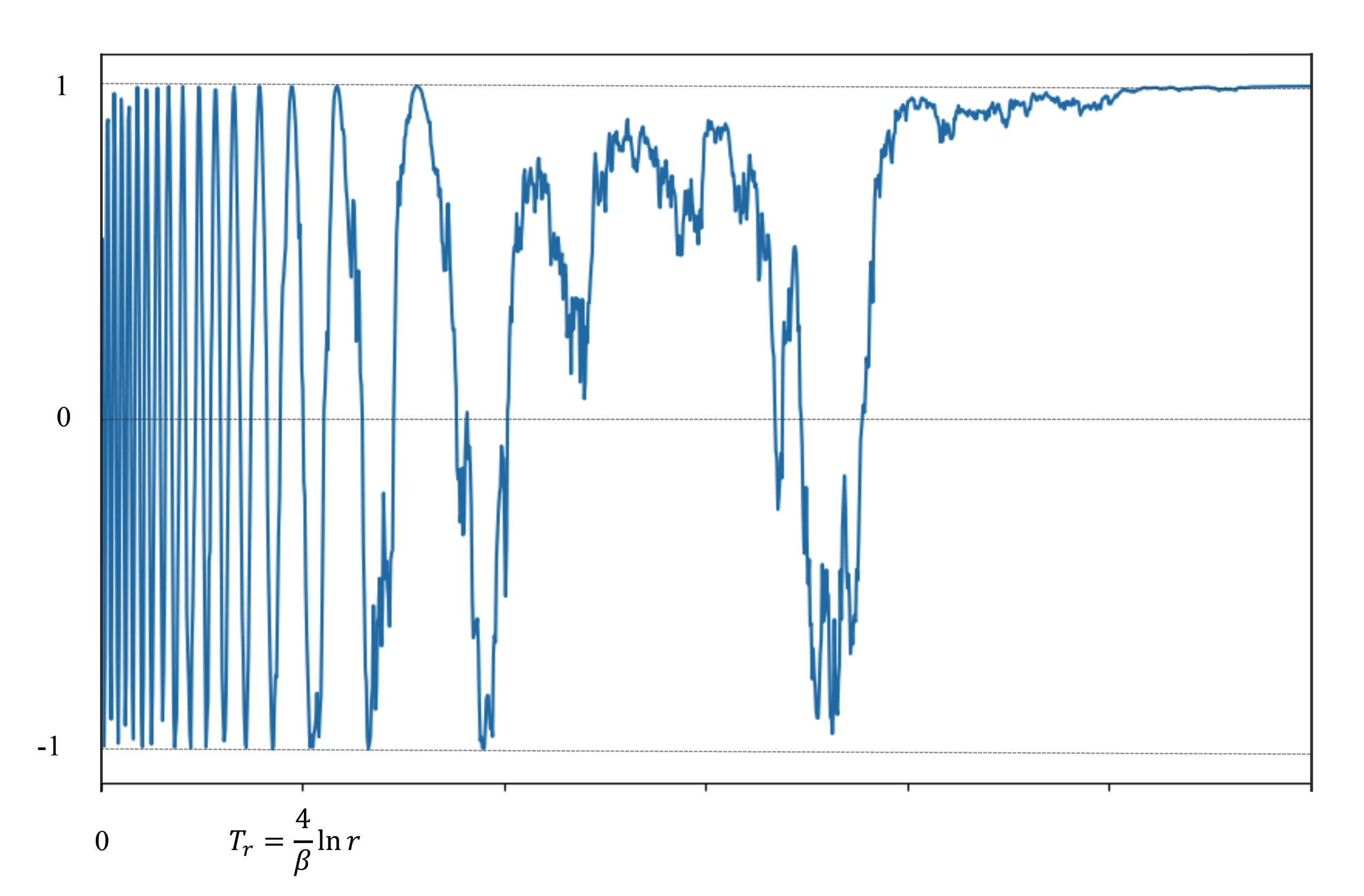}
		\caption{Simulation of the oscillations of $t\mapsto \cos(\alpha_r(t))$, $r=100, \beta=4$. At first, the dynamic is deterministic. After times of order $T_r$, randomness appears, until the system gets frozen and converges towards $1$.}\label{fig:oscillations}
	\end{figure}

	We first state a lemma showing that, after large times, the diffusion $\alpha_\gl$ is very close to its limiting value with high probability.
	\begin{Lemma}[Finite time approximation]\label{lem:finitetime}
		For all $\beta,\lambda_0>0$, there exists $C=C(\beta,\lambda_0)>0$ such that for all $\lambda \leq \lambda_0$, for all $T\geq 1$, we have 
		\begin{align*}
			\P\Big(\big|\alpha_\gl (T)  - \alpha_\gl(+\infty)\big| > \pi/2\Big) \leq C \exp\big(-(1\wedge\beta) T/128\big)\, .
		\end{align*}
	\end{Lemma}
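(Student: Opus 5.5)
We work directly with the stochastic sine equation \eqref{sine eq} and use the known structure of $\alpha_\lambda$. The plan is to show that, after time $T$, the process $\alpha_\lambda$ sits very close to a multiple of $2\pi$ and stays trapped there. Since $\lambda\le\lambda_0$ and $\lfloor\alpha_\lambda\rfloor_{2\pi}$ is non-decreasing (property (3)), the only way $|\alpha_\lambda(T)-\alpha_\lambda(\infty)|>\pi/2$ can occur is if, at time $T$, the process is either not within a small distance of $\lfloor\alpha_\lambda(T)\rfloor_{2\pi}$, or is close but later climbs past $2\pi$ more. (For $\lambda<0$ we use the symmetry $\alpha_{-\lambda}\overset{(d)}{=}-\alpha_\lambda$.)

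\textbf{Step 1 (change of variables).} Write $\alpha_\lambda(t)=2\pi m+\theta(t)$ with $\theta\in[0,2\pi)$ between upcrossings. Use the standard substitution $y=\ln\tan(\theta/4)$, as in \cite{valko_continuum_2009}. This maps $(0,2\pi)$ to $\mathbb{R}$, and without drift the process becomes a Brownian motion with constant drift pushing towards $-\infty$ (i.e.\ towards $\theta=0$). The rate of this drift is a multiple of $\beta$ after the time change built into \eqref{sine eq}; the original drift $\lambda f(t)$ contributes a term of size $\lambda f(t)\cosh(y)$-type. One should then check that, as long as $\lambda f(t)e^{|y|}$ is small, the drift of $y$ is at most $-c(1\wedge\beta)$. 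The $1\wedge\beta$ comes from comparing the noise strength, of order $1$ in these coordinates, with the restoring drift, of order $\beta$.

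\textbf{Step 2 (exponential hitting of the neighborhood of $2\pi\mathbb{Z}$).} Using a supermartingale $e^{\gamma y}$ or a direct comparison with a drifted Brownian motion, show two things.
\begin{enumerate}
\item By time $T/2$, $y$ has gone below $-cT$ except with probability $Ce^{-c(1\wedge\beta)T}$.
\item Starting from $y\le -cT$ at time $T/2$, $y$ stays below $\ln\tan(\pi/8)$ for all later times except with probability $e^{-c'T}$.
\end{enumerate}
For the second point we use that the drift perturbation $\lambda_0 f(t)\le\lambda_0\beta e^{-\beta T/8}/4$ is negligible for $t\ge T/2$. A drifted Brownian motion starting at $-cT$ reaches level $O(1)$ with probability $e^{-c''T}$ by the classical exit formula for drift $-\mu$: $P(\sup\ge a)=e^{-2\mu a}$.

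\textbf{Step 3 (a clean finish).} An alternative that avoids Step 2(1) is to bound $\P(\alpha_\lambda(\infty)\ne\lfloor\alpha_\lambda(T)\rfloor_{2\pi})$, together with the event that $\theta(T)$ is in a middle region. Since $\alpha(\infty)\in2\pi\mathbb{Z}$ and $\alpha(T)$ is within $\pi/2$ of it otherwise, the two failure events are then:
\begin{enumerate}
\item $\theta(T)\in[\pi/2,3\pi/2]$;
\item another $2\pi$ is crossed after time $T$.
\end{enumerate}
Both are controlled by Step 2.

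The main obstacle is Step 2(1): controlling the number of $2\pi$-crossings before $T/2$, because after each crossing $y$ restarts from $-\infty$. This is in fact favorable, since each restart starts low. Crossings are rare once $\lambda f(t)$ is small, and property (4) bounds the total number of crossings by a geometric tail. The uniformity in $\lambda\le\lambda_0$ and the constant $1/128$ will come from crude choices of $c$ in these comparisons.
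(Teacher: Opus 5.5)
There is a genuine gap, and it starts in Step 1. The drift of $y=\ln\tan(\theta/4)$ is not a constant push toward $-\infty$ of order $\beta$. It\^o's formula gives $\mathrm{d}y=\frac{\lambda}{2}f(t)\cosh y\,\mathrm{d}t+\frac12\tanh y\,\mathrm{d}t+\mathrm{d}B$. Without the $\lambda$-term the drift is $\frac12\tanh y$: it has size at most $1/2$ (no $\beta$), it vanishes at $y=0$, so $\theta=\pi$ is an \emph{unstable} equilibrium, and for $y>0$ it pushes toward $+\infty$, i.e.\ $\theta\to2\pi$.

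This has two consequences.
\begin{itemize}
\item The main difficulty of the lemma is missing from your plan. You must show that the process does not linger near $\theta=\pi$, so that $|y|$ reaches a level $r_0\asymp(1\wedge\beta)T$ during $[T/2,T]$. The paper does this with a Girsanov change of measure that reverses the $\tanh$ drift; the resulting density penalizes time spent in $\{|y|\le1\}$. It then compares with a drifted Brownian motion elsewhere.
\item The factor $1\wedge\beta$ does not come from a restoring force of order $\beta$. It comes from the fact that the region where the drift is close to $\pm1/2$ only has width about $\beta t/4$.
\end{itemize}

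Approaching the next multiple of $2\pi$ from below ($y\to+\infty$) is neither a failure nor rare. For $\beta>2$ the diffusion converges from below with positive probability (Remark on the $\beta=2$ transition). Hence Step 2(1), which asks for $y(T/2)\le -cT$ with overwhelming probability, is false. Likewise the event $\{\alpha_\lambda(\infty)\neq\lfloor\alpha_\lambda(T)\rfloor_{2\pi}\}$ in Step 3 has probability bounded away from $0$ as $T\to\infty$. Your Step 3 also omits a genuine failure event: $\theta(T)\in(3\pi/2,2\pi)$ followed by convergence to $\lfloor\alpha_\lambda(T)\rfloor_{2\pi}$. In that case $|\alpha_\lambda(T)-\alpha_\lambda(\infty)|>3\pi/2$, so your claim that $\alpha(T)$ is otherwise within $\pi/2$ of the limit is wrong. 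The correct criterion is symmetric: $|y|\ge r_0$ at some time in $[T/2,T]$, and then no later return of $|y|$ to $r_0/2$. This keeps $\alpha_\lambda$ near one fixed multiple of $2\pi$ from either side.

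Step 2(2) also fails as written. The perturbation is $\frac{\lambda}{2}f(t)\cosh y$, not $\lambda f(t)$, and it is of order one exactly at the bottom of the well, $y\approx-\beta t/4$. For $\beta<2$ the process, moving down at speed about $1/2$, catches up with this well bottom. So you cannot compare it for all $t\ge T/2$ with a Brownian motion of constant drift $-\mu$ started at $-cT$; the $\cosh y$ term pushes it back up there. The paper treats the two sides differently:
\begin{itemize}
\item On the upper side it compares directly with drift $1/2-e^{-r_0/2}$.
\item On the lower side it iterates over levels $r_k=(1+k\beta/16)r_0$ and times $T_k=T/2+kr_0$, using that the well bottom moves down linearly in time.
\end{itemize}
Finally, property (4) controls the number of points, not the times of $2\pi$-crossings, so it cannot give a bound exponential in $T$.
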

	
	This key lemma is one of the main technical imput of the present paper. It relies on two facts: the first one is that the regions around $2\pi \N+\pi$ are unstable equilibria for the diffusion $\alpha_\gl$. Using Girsanov's theorem, we are able to prove that it escapes rapidly from these. Then, as long as the diffusion escaped from neighborhood of $2\pi \N+\pi$, the multiples of $2\pi$ are attractive.

	\medskip
	
	We then turn to the time interval where we expect that the two clusters behave in an almost independent way.

	Let us first state the lemmas for $k=2$, as they need some refinement for general $k$-point correlation functions. Take\footnote{Note that to be in the exact setting of Theorem \ref{theo:main2}, we would need to take $x_1 := - \gl$ but it does not change the bounds.} $x_1 := 0$ and $x_2 := 0$. We simply have two diffusions $\alpha^{(1)}$ and $\alpha^{(2)}$ driven by $W^{(1)} = W$ and $W^{(2)} = W_r$.

	Fix $n\geq 1$ and let $(t_j := j T/n,\; j =1,\cdots,n)$ be a discretization of the interval $[0,T]$.

	The following lemma shows that, for large times $T$ and large integer $n$, the diffusion $\alpha^{(i)}$ and a discretized version of it, measurable w.r.t.  $(W^{(i)}(t_j),\; j = 1,\cdots,n)$ are close to each other with high probability.
	
	\begin{Lemma}[Discretization]\label{lem:discrete}
		Let $\beta,\lambda_0>0$. There exists $C=C(\beta,\lambda_0)>0$ such that for all $\lambda_1,\lambda_2\leq \lambda_0$, for all $T>0$, for all integer $n$, and for $i=1,2$, there exists a piecewise-constant process $\alpha_{\mathrm{p.c.}}^{(i)}$, measurable with respect to the random vectors $(W^{(i)}(t_j),\; j = 1,\cdots,n)$, such that
		\begin{align*}
			\P\Big(\big|\alpha^{(i)}_{\mathrm{p.c.}}\big(T\big) - \alpha^{(i)}\big(T\big)\big| \geq \pi/3\Big) \leq C\frac{ e^{4T}}{\, n}\,.
		\end{align*}
	\end{Lemma}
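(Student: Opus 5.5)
The plan is an Euler-type scheme for the SDE
\[
\mathrm{d}\alpha^{(i)}=\lambda_i f(t)\,\mathrm{d}t+\mathrm{Re}\big((e^{-i\alpha^{(i)}}-1)\,\mathrm{d}W^{(i)}\big),
\]
built only from the increments $\Delta_j W^{(i)} := W^{(i)}(t_{j+1})-W^{(i)}(t_j)$. Set $h := T/n$, $\hat\alpha(0)=0$ and
\[
\hat\alpha(t_{j+1})=\hat\alpha(t_j)+\lambda_i\int_{t_j}^{t_{j+1}} f(s)\,\mathrm{d}s+\mathrm{Re}\big((e^{-i\hat\alpha(t_j)}-1)\,\Delta_j W^{(i)}\big),
\]
and let $\alpha^{(i)}_{\mathrm{p.c.}}(t):=\hat\alpha(t_j)$ for $t\in[t_j,t_{j+1})$, with $\alpha^{(i)}_{\mathrm{p.c.}}(T)=\hat\alpha(t_n)$. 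This process is a deterministic function of $(W^{(i)}(t_j))_{j\le n}$, so the measurability requirement holds by construction. It then suffices to prove an $L^2$ bound
\[
\E\big[|\hat\alpha(t_n)-\alpha^{(i)}(T)|^2\big]\le C e^{4T}/n
\]
and apply Chebyshev with threshold $\pi/3$.

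For the $L^2$ bound, write $e_j:=\alpha^{(i)}(t_j)-\hat\alpha(t_j)$. The one-step decomposition is
\[
e_{j+1}=e_j+\mathrm{Re}\int_{t_j}^{t_{j+1}}\big(e^{-i\alpha^{(i)}(s)}-e^{-i\hat\alpha(t_j)}\big)\,\mathrm{d}W^{(i)}(s),
\]
and the drift cancels exactly because the scheme integrates $f$ exactly. Split the integrand as
\[
\big(e^{-i\alpha^{(i)}(s)}-e^{-i\alpha^{(i)}(t_j)}\big)+\big(e^{-i\alpha^{(i)}(t_j)}-e^{-i\hat\alpha(t_j)}\big).
\]
Each stochastic integral has mean zero conditionally on $\mathcal{F}_{t_j}$, and the integrand is $1$-Lipschitz in the angle. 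Using $|e^{ia}-e^{ib}|\le|a-b|$, Itô's isometry (with a factor from the real part of a complex Brownian motion) and $(a+b)^2\le 2a^2+2b^2$ gives
\[
\E[e_{j+1}^2]\le \E[e_j^2](1+Ch)+C\,h\sup_{s\in[t_j,t_{j+1}]}\E\big[|\alpha^{(i)}(s)-\alpha^{(i)}(t_j)|^2\big].
\]
The cross term $2\E[e_j\cdot\text{martingale increment}]$ vanishes, which is why the factor is $1+Ch$ rather than worse. The local increment satisfies $\E|\alpha^{(i)}(s)-\alpha^{(i)}(t_j)|^2\le 2\lambda_0^2\beta^2h^2/16+8h\le Ch$, since the drift is bounded by $\lambda_0\beta/4$ and the diffusion coefficient by $2$. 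Hence $\E[e_{j+1}^2]\le(1+Ch)\E[e_j^2]+Ch^2$. Discrete Grönwall then yields
\[
\E[e_n^2]\le Ch^2\, n\, e^{CT}=C\,T\,e^{CT}/n.
\]

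The main obstacle is purely the bookkeeping of constants needed to land on the stated rate $e^{4T}/n$. The Lipschitz constant of $x\mapsto e^{-ix}$ is $1$ and the complex Brownian motion contributes a factor of order one per unit time, so the Grönwall factor is $e^{c'T}$ with an explicit $c'$. One must check that $c'$, together with the factor $T$ absorbed via $T\le e^{T}$, stays within $e^{4T}$; if not, the exponent simply gets adjusted, since only its finiteness matters later. No uniformity issues arise: the coefficients are globally Lipschitz and bounded and $\lambda_i\le\lambda_0$, so $C$ depends only on $(\beta,\lambda_0)$. The construction is the same for $i=1$ (driven by $W$) and $i=2$ (driven by $W_r$), because both are standard complex Brownian motions.
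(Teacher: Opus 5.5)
Your proof is correct and follows essentially the paper's route: an Euler scheme built from the grid values of $W^{(i)}$, an $L^2$ error bound via It\^o's isometry, the Lipschitz bound on $x\mapsto e^{-ix}$ and Gr\"onwall, then Chebyshev. The differences are that the paper interpolates the scheme continuously as $\alpha_{\mathcal{C}}$ and applies Gr\"onwall in continuous time, whereas you freeze the exact solution on each step and run a discrete recursion; and your hedging on constants is unnecessary, since the recursion factor is at most $1+2h$ and summing the geometric series (note $Ch^2 n=CT^2/n$, not $CT/n$) gives, for $h\le 1$ (the case $n<T$ is trivial), $\E[e_n^2]\le C\,T e^{2T}/n\le Ce^{4T}/n$ uniformly in $T$ --- slightly cleaner than the paper's Proposition, whose bound carries an extra factor $T\delta=T^2/n$ in front of $e^{4T}$.
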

	We then control the total variation distance between the discretizations of $(W^{(1)},W^{(2)})$ and of the independent coupling $W^{(1)}\otimes W^{(2)}$.
	\begin{Lemma}[Upper bound for total variation]\label{lem:totalvariation}
		Let $\beta>0$. There exists $C=C(\beta)>0$ such that for all $T,r>0$ and for all $n\in\mathbb{N}$, we have 
		\begin{align*}
			\mathrm{d}_{TV}\bigg(\Big(W^{(1)}(t_j),W^{(2)}(t_j)\Big)_{j=1}^n,\Big(W^{(1)}(t_j)\otimes W^{(2)}(t_j)\Big)_{j=1}^n\bigg)\leq C \frac{n^{3/2} e^{\frac{\beta}{4}T}}{r}\,.
		\end{align*}
	\end{Lemma}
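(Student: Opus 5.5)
The statement to prove is Lemma \ref{lem:totalvariation}. The plan is to compare the Gaussian-type increment vector $(\Delta_j W, \Delta_j W_r)_{j\le n}$, with $\Delta_j W := W(t_j)-W(t_{j-1})$, to independent Gaussians via a conditional covariance computation and Pinsker's inequality.

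Since $\mathrm{d}W_r = e^{-i\alpha_r}\mathrm{d}W$, the increment $\Delta_j W_r = \int_{t_{j-1}}^{t_j} e^{-i\alpha_r(s)}\mathrm{d}W(s)$. Conditionally on $\mathcal F_{t_{j-1}}$, the pair $(\Delta_j W,\Delta_j W_r)$ is not exactly Gaussian, because $\alpha_r$ is random. However, $\alpha_r$ is dominated by its drift $r f(s)$, which on $[0,T]$ is at least $r(\beta/4)e^{-\beta T/4}$. Hence $e^{-i\alpha_r}$ oscillates with frequency of order $r e^{-\beta T/4}$ over an interval of length $T/n$.

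The covariance between $\Delta_j W$ and $\Delta_j W_r$ is $\E\big[\int_{t_{j-1}}^{t_j} e^{\mp i\alpha_r(s)}\,\mathrm{d}s\,\big|\,\mathcal F_{t_{j-1}}\big]$ (up to constants for real and imaginary parts). I will show this is $O(e^{\beta T/4}/r)$ by writing $e^{-i\alpha_r}\mathrm{d}s = \frac{\mathrm{d}(e^{-i\alpha_r}) - (\text{martingale} + \text{Itô correction})}{-i\, r f(s)}$ and integrating by parts. The boundary terms and the Itô correction each contribute $O(1/(r f))$. The martingale part has zero conditional mean, though its fluctuations are $O(\sqrt{T/n}/(rf))$ in $L^2$.

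I will then couple these increments with an independent Gaussian pair. I will first replace $(W,W_r)$ by a Gaussian vector with the same conditional covariances, so that each step is, given the past, close to a standard Gaussian pair perturbed by a covariance of size $\varepsilon_j$. The relative entropy between such a step and the independent Gaussian pair is $O(\varepsilon_j^2 n^2/T^2)$, after normalizing by the variance $T/n$. Since the per-step normalized correlation is of order $\frac{e^{\beta T/4}}{r}\cdot\frac{n}{T}$, its square is of order $n^2 e^{\beta T/2}/(r^2T^2)$.

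Summing over the $n$ steps with the chain rule for KL divergence gives a total of order $n^3 e^{\beta T/2}/r^2$. Pinsker's inequality then yields the bound $n^{3/2}e^{\beta T/4}/r$, which matches the claimed exponent exactly. This confirms that the intended scheme is per-step correlation $\sim n e^{\beta T/4}/r$, summed in entropy.

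The main obstacle is rigor in the non-Gaussian conditional law, since the KL divergence of a non-Gaussian step is not controlled by its covariance alone. I would first try a Girsanov approach. Write $W_r = \rho W + \sqrt{1-|\rho|^2}\,B$-type decompositions in small steps. Alternatively, directly construct on each $[t_{j-1},t_j]$ a process $\tilde W_r$ that is independent of $W$ and close to $W_r$, by conditioning on $W(t_j)$ only: replace the $\mathrm{d}W$ appearing in $W_r$ by the Brownian bridge part orthogonal to $\Delta_j W$, which has the same law. The resulting error in $\Delta_j W_r$ is the projection $\frac{n}{T}\Delta_j W\int e^{-i\alpha_r}\mathrm{d}s$, which has size $\sqrt{n/T}\cdot e^{\beta T/4}/r$.

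From there, a Gaussian-shift entropy bound on each step gives the same sum. The remaining work is bookkeeping: the constant depends on $\beta$ only, and if $n^{3/2}e^{\beta T/4}/r \ge 1$ the bound is trivial.
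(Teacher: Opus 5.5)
Your outline is the paper's own argument. The covariance bound is Lemma~\ref{lem:covarianceBrowniens} (Itô integration by parts against $1/(rf)$). The paper then regards $X^n=(\Delta_jW,\Delta_jW_r)_{j\le n}$ as a circular complex Gaussian vector with block-diagonal covariance, and computes its Hellinger distance to $\mathcal N_{\mathbb C}(0,2\delta I_{2n})$ from determinants. That computation is the closed-form counterpart of your KL chain rule plus Pinsker, and both give $n^{3/2}e^{\beta T/4}/(rT)$. Your conditioning on $\mathcal F_{t_{j-1}}$ is, if anything, more careful, since the blocks of $X^n$ are uncorrelated but not independent.

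The paper never addresses the point you raise: it simply asserts that $X^n$ is Gaussian. So the step you call the main obstacle is a genuine gap, both in your proposal and in the written proof, and neither of your proposed repairs closes it.

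\emph{The gap.} The pair $(W,W_r)$ is not jointly Gaussian, because the phase is driven by $W$ itself: $\E[W_r(t)\overline{W(t)}^{2}]=4\int_0^t\E[e^{-i\alpha_r(s)}\overline{W(s)}]\,\mathrm ds\sim-r\beta t^3/6\neq0$ as $t\to0$. Hence covariances control neither the KL divergence nor the total variation distance. Replacing $(W,W_r)$ ``by a Gaussian vector with the same conditional covariances'' is exactly what has to be proved.

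\emph{The bridge construction.} It does not help. The claim that $\int e^{-i\alpha_r}\,\mathrm d\mathcal B$ is (nearly) an independent Gaussian copy presupposes that the phase on $[t_{j-1},t_j]$ does not depend on $W|_{[t_{j-1},t_j]}$. But $\alpha_r(s)-\alpha_r(t_{j-1})-r\big(F(s)-F(t_{j-1})\big)=\int_{t_{j-1}}^s2\sin(\alpha_r/2)\,\mathrm dB$ is a functional of the very increment you are decoupling, and it has size $\sqrt\delta$ uniformly in $r$.

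Freezing the phase at $\phi_j(s)=\alpha_r(t_{j-1})+r\big(F(s)-F(t_{j-1})\big)$ makes each step exactly Gaussian. However, $\E\big|\int_{t_{j-1}}^{t_j}(e^{-i\alpha_r}-e^{-i\phi_j})\,\mathrm dW\big|^2$ is of order $\delta^2$, which is a relative error of order $\sqrt\delta$ per step with no decay in $r$.

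You could make the bridge step rigorous by enlarging the filtration with $W(t_j)$, i.e.\ writing $\mathrm dW(s)=\mathrm d\widehat W(s)+\frac{W(t_j)-W(s)}{t_j-s}\,\mathrm ds$. Lévy's characterization then makes $\int_{t_{j-1}}^{t_j}e^{-i\alpha_r}\,\mathrm d\widehat W$ exactly $\mathcal N_{\mathbb C}(0,2\delta)$ and independent of $(\mathcal F_{t_{j-1}},\Delta_jW)$. But the remainder is then $\int_{t_{j-1}}^{t_j}e^{-i\alpha_r(s)}\frac{W(t_j)-W(s)}{t_j-s}\,\mathrm ds$, not your projection term. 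When $rf\delta\gg1$ it is of order $(rf)^{-1/2}$ even for a frozen phase, and it is correlated with the Gaussian part, so no Gaussian-shift entropy bound applies.

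What is missing is a quantitative argument that the non-Gaussian part of the dependence between $\Delta_jW$ and $\Delta_jW_r$ is also suppressed by the oscillation of $e^{-i\alpha_r}$. Your Girsanov remark does not supply it.

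\emph{A smaller issue.} Your per-step estimate yields a KL bound of order $n^3e^{\beta T/2}/(r^2T^2)$, and you drop the factor $T^{-2}$; the paper likewise drops a factor $1/T$. This is harmless for $T\ge1$, which is the regime actually used ($T\asymp\log r$). But the lemma as stated for all $T>0$ is false. Take fixed large $r$, $n=1$ and $T\to0$: then $\alpha_r=O(rT)$ on $[0,T]$, so $W_r(T)-W(T)=O(rT^{3/2})$ in $L^2$. The total variation distance therefore tends to its maximal value, while the right-hand side tends to $C/r$.
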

	Note that the total variation distance is well-controlled for times $T$ smaller than $T_r$, which is expected since correlations appear after $T_r$. 
	
	\medskip
	
	When extending these results to general $k$-point correlation functions, we need the following refinement of Lemmas \ref{lem:discrete} and \ref{lem:totalvariation}.
	\begin{Lemma}\label{lem:kpoints}
		Let $\beta, \lambda_0>0$. There exists a constant $C=C(\beta,\lambda_0)>0$ such that the following holds. Let $k, n \geq 1$ be integers, $1 \leq k_0 < k$ and recall that $J_1 = \{1,\cdots,k_0\}$ and $J_2 = \{k_0+1,\cdots,k\}$.
		For all $T$, $r >0$ and all $\gl_i \leq \gl_0$ and $x_i$ chosen as in \eqref{defclusters}, one can construct random variables $\boldsymbol{\tilde{W}}_1\in \mathbb{C}^{k_0 \times n}$ and $\boldsymbol{\tilde{W}}_2 \in \mathbb{C}^{(k-k_0) \times n}$ on the same probability space as the original diffusions $(\alpha^{(i)})_{1 \leq i \leq k}$ as well as piecewise constant processes $(\alpha^{(i)}_{\mathrm{p.c.}})_{1 \leq i \leq k}$ such that
		\begin{itemize}
			\item For $i \in J_1$, $(\alpha^{(i)}_{\mathrm{p.c.}})_{i \in J_1}$ is measurable w.r.t. $\boldsymbol{\tilde{W}}_1$ and for $i \in J_2$, $(\alpha^{(i)}_{\mathrm{p.c.}})_{i \in J_2}$ is measurable w.r.t. $\boldsymbol{\tilde{W}}_2$ ;
			\item The total variation distance satisfies 
				\begin{align*}
				\mathrm{d}_{TV}\Big(\big(\boldsymbol{\tilde{W}}_1,\boldsymbol{\tilde{W}}_2\big),\big(\tilde{\boldsymbol{W}}_1\otimes \tilde{\boldsymbol{W}}_2\big)\Big)&\leq C\,k^{3} \, \frac{n^{5/2} \; e^{\frac{\beta}{4}T}}{r}\,;
			\end{align*}
		\item The discretization error is bounded by
			\begin{align*}
			\sup_{1\leq i\leq k}\P\Big(\big|\alpha^{(i)}_{\mathrm{p.c.}}\big(T\big) - \alpha^{(i)}\big(T\big)\big| > \pi/2\Big) &\leq \ C\frac{e^{4T}}{ n}\, .
	\end{align*}
		\end{itemize}
	\end{Lemma}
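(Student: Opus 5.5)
We build everything from the two ``master'' Brownian motions $W$ and $W_r$ together with the rotation processes $\alpha_{x_i}$ and $\alpha_{x_i+r}-\alpha_r$. The key observation is the structure of the drivers. For $i\in J_1$, $W^{(i)}=\int e^{-i\alpha_{x_i}}\mathrm{d}W$ is a functional of $W$ only. For $i\in J_2$, $W^{(i)}=\int e^{-i(\alpha_{x_i+r}-\alpha_r)}\mathrm{d}W_r$, and $\alpha_{x_i+r}-\alpha_r$ solves a sine equation driven by $W_r$, so it is a functional of $W_r$ only. Hence all of $\mathbf{W}_1$ is a (strong) functional of $W$ and all of $\mathbf{W}_2$ a functional of $W_r$. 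The natural plan is:
\begin{itemize}
\item set $\boldsymbol{\tilde W}_1:=(W(t_j))_{j\le n}$ and $\boldsymbol{\tilde W}_2:=(W_r(t_j))_{j\le n}$, possibly augmented by a few more discrete coordinates;
\item construct every $\alpha^{(i)}_{\mathrm{p.c.}}$ by an Euler-type scheme;
\item apply Lemma~\ref{lem:totalvariation} to the pair $(W(t_j),W_r(t_j))_j$, i.e.\ the case $x_1=x_2=0$.
\end{itemize}
This already gives total variation $Cn^{3/2}e^{\beta T/4}/r$, better than claimed. The stated factor $k^3 n^{5/2}$ suggests the authors instead discretize each of the $k$ drivers and pay for the extra rotations, but the cleaner route should suffice if the discretization step works.

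\textbf{Discretization (step 2).} For $i\in J_1$, define recursively on the grid a piecewise-constant approximation of the pair $(\alpha_{x_i},\alpha_{x_i+\lambda_i})$:
\[
\hat\alpha_\mu(t_{j+1})=\hat\alpha_\mu(t_j)+\mu\,(F(t_{j+1})-F(t_j))+\mathrm{Re}\big((e^{-i\hat\alpha_\mu(t_j)}-1)(W(t_{j+1})-W(t_j))\big),
\]
with $F(t)=1-e^{-\beta t/4}$ and $\mu\in\{x_i,x_i+\lambda_i\}$. Then take $\alpha^{(i)}_{\mathrm{p.c.}}=\hat\alpha_{x_i+\lambda_i}-\hat\alpha_{x_i}$. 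This is measurable with respect to $\boldsymbol{\tilde W}_1$. The same construction works for $J_2$ using increments of $W_r$ and parameters $x_i$, $x_i+\lambda_i$, since $\alpha_{x+r}-\alpha_r$ solves the sine equation with parameter $x$ driven by $W_r$.

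The error estimate then follows the proof of Lemma~\ref{lem:discrete}. An $L^2$ Euler-scheme bound for SDEs with Lipschitz coefficients gives $\E|\hat\alpha-\alpha|^2(T)\le C e^{cT}(1+\mu^2)/n$, and Markov's inequality converts this into the probability bound.

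\textbf{Main obstacle.} The difficulty is that $|x_i|$ is not bounded by $\lambda_0$; it can be as large as the cluster diameter $L$. The drift $\mu f$ is then large, and the Euler error constant blows up in $\mu$, whereas the lemma requires $C$ independent of $x_i$ and $L$.

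To handle this, I would not approximate $\alpha_{x_i}$ directly. Instead I would use the difference representation. Write $\alpha^{(i)}$ as the solution of its own sine equation with parameter $\lambda_i\le\lambda_0$, driven by $W^{(i)}$. Then discretize in two stages:
\begin{itemize}
\item approximate $W^{(i)}(t_j)$ by Riemann sums $\sum_{l<j} e^{-i\alpha_{x_i}(t_l)}\Delta W_l$;
\item then run the bounded-parameter scheme of Lemma~\ref{lem:discrete} driven by these approximations.
\end{itemize}
The rotation $e^{-i\alpha_{x_i}}$ still needs a discrete proxy. This is where the extra coordinates enter: I would add to $\boldsymbol{\tilde W}_1$ the finer increments of $W$ on each cell (a further $n$-point subdivision), which gives $n^2$ total points. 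This is consistent with the $n^{5/2}$ in the total variation bound once the estimate of Lemma~\ref{lem:totalvariation} is redone for the refined grid.

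Inside a cell, the large deterministic drift $x_i f(t)\,\mathrm{d}t$ of $\alpha_{x_i}$ is explicit. The error then comes only from the martingale part, which has Lipschitz coefficients, and this should give the $e^{4T}/n$ bound uniformly in $x_i$. Controlling this uniformity in $x_i$ is the step I expect to be delicate.

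The $k^3$ factor would arise from a union or summed total-variation bound over the $k$ rotated drivers and their pairwise covariance corrections in the Gaussian comparison. I would obtain it by bounding the relative entropy between the joint Gaussian-like discrete vectors, summing the $O(k^2)$ cross-covariances, each of size $e^{\beta T/4}/r$ per increment, and then applying Pinsker's inequality.
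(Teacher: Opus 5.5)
\textbf{There is a genuine gap.} It sits exactly at the step you call delicate. Your observation that $\mathbf{W}_1$ is a functional of $W$ and $\mathbf{W}_2$ a functional of $W_r$ is correct, but it cannot be used through a grid whose size is independent of $x_i$. No $\alpha^{(i)}_{\mathrm{p.c.}}$ measurable with respect to such samples can satisfy the third bullet uniformly in $x_i$, and $|x_i|$ ranges up to the cluster diameter $L$.

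The obstruction is the decorrelation mechanism the lemma exploits between clusters. The driver $W^{(i)}=\int e^{-i\alpha_{x_i}}\,\mathrm{d}W$ decorrelates from $W$ exactly as $W_r$ does, with $r$ replaced by $|x_i|$ in Lemma~\ref{lem:covarianceBrowniens}. Concretely, suppose the phase were the deterministic $\theta=x_iF$. Conditionally on the grid values of $W$, the increment $\int_{\mathrm{cell}}e^{-i\theta}\,\mathrm{d}W$ still has variance $2\delta-2\big|\int_{\mathrm{cell}}e^{-i\theta}\,\mathrm{d}s\big|^2/\delta\approx 2\delta$ once $\theta$ turns many times per cell. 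So the unobserved part of $W^{(i)}(T)$ has variance of order $T$. As $|x_i|\to\infty$ with $T$ and the grid fixed, $\alpha^{(i)}(T)$ becomes independent of your data. Its law, that of $\alpha_{\lambda_i}(T)$, does not depend on $x_i$ and is non-degenerate, so the error probability stays bounded below.

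Knowing the drift explicitly does not help. Refining to $n^2$ points only moves the threshold to $|x_i|\gg n^2/T$, and it would give $(n^2)^{3/2}=n^3$ in Lemma~\ref{lem:totalvariation}, not $n^{5/2}$. At best your reduction gives a version with roughly $TL^2$ grid points, i.e.\ $L$-dependent constants. That is incompatible with the statement and with the $L$-uniform branch $C^kk^{k/2}$ of Theorem~\ref{theo:main2}.

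\textbf{The missing idea} is to take as data the increments of the $k$ rotated drivers $W^{(i)}$ themselves. The Euler scheme driven by $\Delta_jW^{(i)}$ has drift parameter $\lambda_i\le\lambda_0$, so Proposition~\ref{discrete} gives an error uniform in $x_i$. The cross-cluster covariances are $O(e^{\beta T/4}/r)$ uniformly, since $\alpha_{x_{i_2}+r}-\alpha_{x_{i_1}}$ has parameter at least $r$.

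The price is what your last paragraph overlooks. Within a cluster, nearby intervals give almost collinear increments, because the phase difference $\alpha_{x_{i'}}-\alpha_{x_i}$ starts at $0$. So the covariance $M^j_q$ can be arbitrarily ill-conditioned, and any Gaussian relative-entropy or Hellinger bound involving $(M^j_q)^{-1}$ blows up. Summing the $O(k^2)$ cross-covariances and applying Pinsker does not suffice.

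The paper fixes this by spectral regularization:
\begin{enumerate}
\item Keep the eigenmodes of $M^j_q$ above $\varepsilon=1/(2kn^2)$ and replace the others by independent $\mathcal N_{\mathbb C}(0,\varepsilon)$ noise.
\item Check via Lemma~\ref{lem:gammaclose} that this $L^2$ perturbation, of size $1/n^2$ per step, keeps the scheme within $\pi/6$ with probability at least $1-Ce^{4T}/n$.
\item Bound the Hellinger distance through the Schur complement, using $\mathrm{Tr}(R)\le k^2\|\boldsymbol{\kappa}_j\|_\infty^2/\varepsilon^2$.
\end{enumerate}
The threshold $\varepsilon$ is what produces the factor $k^3n^{5/2}$.
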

	 Recall that the diffusions in the cluster $J_1$ are driven by $\boldsymbol{W}_1$ and in the cluster $J_2$ by $\boldsymbol{W}_2$. The main technical issue for large $k$ is that strong correlations within each cluster can cause the covariance matrices of the random vectors $(W^{(i)}(t_j),\; j = 1,\cdots,n,\;i \in J_\ell)$ for $\ell =1$ or $2$ to become nearly singular. Because bounds on the total variation distance typically depend on the inverse of these covariance matrices, such near-degeneracies cause standard decoupling bounds to blow up, counterbalancing the polynomial decay induced by the oscillations of $\alpha_r$. To prevent this, we use another approximation of the continuous processes
	$\boldsymbol{W}_1$ and $\boldsymbol{W}_2$ using a spectral regularization technique. This regularization ensures that the total variation distance remains controlled while still providing a precise discretization of the diffusions.
	
	\medskip
	
	\subsection{Proof of Theorems \ref{theo:main} and \ref{theo:main2}}
	
	We focus on the proof of Theorem  \ref{theo:main2}. Indeed, applying it with $k=2$ taking the intervals $[0,1]$ and $[r, r+\gl]$ and using translation invariance we immediately obtain the result of Theorem \ref{theo:main}.
	
	\medskip
	
	Recall that $\mathcal{I}_1 := \prod_{i=1}^{k_0} I_i$ and $\mathcal{I}_2 := \prod_{i=k_0 +1}^{k} I_i$ and that $J_1=\{1,\cdots,k_0\}$ and $J_2=\{k_0+1,\cdots,k\}$, and introduce the products
	\begin{align*}
		\cP_1 := \prod_{i=1}^{k_0} \mbox{Sine}_\beta(I_i),\qquad \cP_2 := \prod_{i=k_0+1}^{k} \mbox{Sine}_\beta(r+I_i)\,.
	\end{align*}
	We need to bound from above
	\begin{align*}
		\Big|	\E\big[\cP_1 \times \cP_2\big] - \E\big[\cP_1\big]\,\E\big[\cP_2\big]\Big|\,.
	\end{align*}
	The first step is to introduce a cutoff on the products in the expectations and decompose it into probabilities as 
	\begin{align}
		&\Big|\E\big[(\cP_1 \times \cP_2) \mathbf{1}_{\{\cP_1 \leq m,\; \cP_2 \leq m\}}\big] - \E\big[\cP_1 \mathbf{1}_{\{\cP_1 \leq m\}}\big]\,\E\big[\cP_2 \mathbf{1}_{\{\cP_2 \leq m\}}\big]\Big|\,, \notag \\
		\leq &\  m^4 \sup_{p_1 \leq m, \; p_2 \leq m} \Big| \P\big[\cP_1 = p_1, \cP_2 = p_2\big] - \P\big[\cP_1 = p_1\big] \P\big[ \cP_2 = p_2\big]\Big|\,. \label{ineqP1P2small}
	\end{align}
	
	Recall that we have $ \mbox{Sine}_\beta(I_i) := \alpha^{(i)}(+\infty)/2\pi$ for $i \in J_1$, and $ \mbox{Sine}_\beta(I_i+r) := \alpha^{(i)}(+\infty)/2\pi$ for $i \in J_2$.
	
	In the products $\cP_1$ and $\cP_2$, up to a small error that we control, one can replace the values $\alpha^{(i)}(+\infty)$ by their approximations $\alpha^{(i)}_{\text{p.c.}}(T)$. More precisely, if $\lfloor x \rceil$ denotes the smallest closest integer of $x$, we use $\lfloor \alpha^{(i)}_{\text{p.c.}}(T)/(2\pi) \rceil$. As soon as $|\alpha^{(i)}_{\text{p.c.}}(T)- \alpha^{(i)}(+\infty)| < \pi$, the two values coincide since  $\alpha^{(i)}(+\infty)/(2\pi)$ is an integer.
	Let us denote the new products for $q=1,2$ 
	\begin{align*}
		\hat{\cP}_q := \prod_{i \in J_q} \big\lfloor \alpha^{(i)}_{\text{p.c.}}(T) \big\rceil\,.
	\end{align*}
	For any events $\mathcal{A}$, $Z$, $X$, $Y$, we have the inequalities
	\begin{align*}
		\big|\P[Z] -\P[X] \P[Y] \big| \leq	\big|\P[Z,\; \mathcal{A}] -\P[X,\; \mathcal{A}] \P[Y,\; \mathcal{A}] \big| + 2\, \P[\mathcal{A}^c]\,,
	\end{align*}
	and
	\begin{align*}
		\big|\P[Z,\;\mathcal{A}] -\P[X,\; \mathcal{A}] \P[Y,\; \mathcal{A}] \big| \leq	\big|\P[Z] -\P[X] \P[Y] \big| + 2\, \P[\mathcal{A}^c]\,.
	\end{align*}
	Applied to the probabilities in \eqref{ineqP1P2small}, it gives
	\begin{gather*}
		\Big| \P\big[\cP_1 = p_1, \cP_2 = p_2\big] - \P\big[\cP_1 = p_1\big] \P\big[ \cP_2 = p_2\big]\Big|\\
		\leq \Big| \P\big[\hat{\cP}_1 = p_1, \hat{\cP}_2 = p_2\big] - \P\big[\hat{\cP}_1 = p_1\big] \P\big[ \hat{\cP_2} = p_2\big]\Big|  + 4 \; \P\big[\cup_{i=1}^k\{|\alpha^{(i)}_{\text{p.c.}}(T)- \alpha^{(i)}(\infty)| \geq \pi\}\big].
	\end{gather*}
	We bound the first term by the total variation distance between the $n \times k$ dimensional complex vectors $(\tilde{\textbf{W}}_1, \tilde{\textbf{W}}_2)$ and $(\tilde{\textbf{W}}_1\otimes\tilde{\textbf{W}}_2)$ of Lemma \ref{lem:kpoints}
	\begin{align*}
		\Big| \P\big[\hat{\cP}_1 =p_1,\; \hat{\cP}_2 = p_2\big] - \P\big[\hat{\cP}_1 = p_1\big] \P\big[ \hat{\cP_2} = p_2\big]\Big| \leq \mbox{d}_{TV}\Big(\big(\tilde{\textbf{W}}_1, \tilde{\textbf{W}}_2\big),\big(\tilde{\textbf{W}}_1\otimes\tilde{\textbf{W}}_2\big)\Big)\,.
	\end{align*}
	We use a union bound on the second term and split each error term into two parts
	\begin{align*}
		\P\Big[\big|\alpha^{(i)}_{\text{p.c.}}(T)- \alpha^{(i)}(\infty)\big| \geq \pi\Big] \leq \P\Big[\big|\alpha^{(i)}_{\text{p.c.}}(T)- \alpha^{(i)}(T)\big| \geq \frac{\pi}{2}\Big] + \P\Big[\big|\alpha^{(i)}(T)- \alpha^{(i)}(\infty)\big| \geq \frac{\pi}{2}\Big],
	\end{align*}
	which are controlled thanks to Lemma \ref{lem:kpoints} and Lemma \ref{lem:finitetime}.
	By choosing the parameters as $T=\frac{4}{41+\beta}\log r$ and $n=e^{(16-\beta) T/14} r^{2/7}$ (a slightly suboptimal, yet asymptotically tight choice), we obtain the bound
	\begin{align}
		&\Big|\,\E\Big[(\cP_1 \times \cP_2) 1_{\{\cP_1 \leq m,\; \cP_2 \leq m\}}\Big] - \E\Big[\cP_1 1_{\{\cP_1 \leq m\}}\Big]\E\Big[\cP_2 1_{\{\cP_1 \leq m\}}\Big]\Big|\, \leq C\, m^4\, k^{3} r^{-c_\beta}\,, \label{ineqP1P2petit}
	\end{align}
	for $c_\beta=\frac{1\wedge\beta}{32(41+\beta)}$ and some $C := C(\beta,\gl_0)$. In the following, we will use again $C$ for some constant depending only on $\beta$ and $\gl_0$ that may change from line to line.
	
	\medskip
	
	It remains to bound:
	\vspace{0,1cm}
	\begin{align*}
		\E\Big[(\cP_1 \times \cP_2) \mathbf{1}_{\{\cP_1 \geq m\}}\Big],\qquad 	\E\Big[\cP_1\,\mathbf{1}_{\{\cP_1 \geq m\}}\Big]\E\big[\cP_2\big]\,,
	\end{align*}
	and the symmetric terms obtained when exchanging the index $1$ and $2$. We have two options to bound these expectations and we use both of them to deal with the first term, as the same calculations are valid for the other terms.
	
	\medskip
	
	\emph{In the first approach,} we use the fact that the intervals composing $\mathcal{I}_1$ and $\mathcal{I}_2$ are non-overlapping, and therefore, we have a good control on the total number of points in the union over all intervals in terms of the diameter of this union:
	\begin{align*}
		\cP_1 &\leq \exp \Big( \sum_{i=1}^{k_0} \mbox{Sine}_\beta(I_i) - k_0\Big) \leq \exp \Big(\mbox{Sine}_\beta\big([-L,0]\big)\Big)\,.
	\end{align*}
	It gives
	\begin{align*}
		\P\big[\cP_1 \geq m\big] &\leq \P\Big[\mbox{Sine}_\beta\big([-L,0]\big) \geq \ln m\Big]\\
		&\leq C \exp\Big(- (\beta/4) \big(\ln m \big)^2 \ln \Big(\frac{\ln m}{L}\Big)\Big)\,,\\
		&\leq C \exp\big(- (\beta/4) (\ln m)^2 \big)\,,
	\end{align*}
	valid when $m \geq \exp(C L)$ for $C:= C(\beta,\gl)$, chosen greater than $e$, using the overcrowding estimate of Theorem \ref{overcrowdingTh}. Moreover,
	\begin{align*}
		\E\big[(\cP_1)^4\big] &\leq  \E\big[\mathbf{1}_{\{\mathcal{P}_1\leq e^{C(L+1)}\}}(\cP_1)^4 \big]+\sum_{p = \lfloor e^{ C(L+1)} \rfloor}^{\infty} \P\big[\cP_1 \geq p^{1/4}\big] \\
		&\leq  e^{4C (L+1)} + \sum_{p = \lfloor e^{C(L+1)} \rfloor}^{\infty} C \exp\big(- \frac{\beta}{64} \big(\ln p \big)^2 \big)\,.
	\end{align*}
By increasing $C$ if necessary, one may suppose that each integer $p$ in the series satisfies $\beta\ln p/64\geq 2$, which implies:
	\begin{align*}
		C\exp(C L)+C\sum_{p=\lfloor e^{CL}\rfloor}^\infty p^{-2} \leq\ C \exp(C L)\,.
	\end{align*}
	In the same way, $	\E\big[(\cP_2)^4\big] \leq  C \exp(C L)$.
	
	\medskip
	
	In total we obtain thanks to H\"older inequality:
	\begin{align}
		\E\Big[(\cP_1 \times \cP_2) 1_{\{\cP_1 \geq m\}}\Big] &\leq 	\E\big[(\cP_1)^4\big]^{1/4} \times \E\big[(\cP_2)^4\big]^{1/4}\times \P\big[\cP_1 \geq m\big]^{1/2} \notag \\
		&\leq C \exp(C L)  \exp\big(- (\beta/8) (\ln m)^2\big)\,. \label{ineqP1P2grands}
	\end{align}
	We take $\ln m = c_\beta\ln r/8 +C(L+1)$. Up to increasing $C$, we may suppose $C\beta \geq 16$, so that $\beta(\ln m)^2/8\geq c_\beta \log r$. The last term \eqref{ineqP1P2grands} is smaller than
	$$C \exp(C L)  r^{-c_\beta }.$$
	Replacing $m$ with its value in \eqref{ineqP1P2petit} yields the bound with the pre-factor $K_1(L,k):= C e^{CL}k^3$ multiplied by $1/r$ at a power slightly smaller $c_\beta$, say $c_\beta/2$.
	
	\medskip
	
	\emph{In the second approach,} we use H\"older's inequality to write
	\begin{align*}
		\E\big[(\cP_1)^2\big] \leq \E\Big[\mbox{Sine}_\beta\big([0,\gl_0]\big)^{2k_0}\Big] \leq C^{2k_0} (2k_0)^{k_0} \,.
	\end{align*}
	Therefore, using $\E\big[(\cP_1)^2 \; 1_{\{\cP_1 \geq m\}}\big] \leq \E\big[(\cP_1)^2\big]/m^2$ and the fact that $k_0^{k_0}(k-k_0)^{k-k_0}\leq k^k$, we obtain
	\begin{align*}
		\E\big[(\cP_1\times \; \cP_2) \; 1_{\{\cP_1 \geq m\}}\big] &\leq \frac{C^{k} k^{k/2}}{m}\,.
	\end{align*}
	Choosing $m = r^{c_\beta/5}$, we obtain the result with $K(\infty,k)$ and $r^{-c_\beta/5}$. 
	\begin{rem}[On the absolute power constant]
		Remark that the constant $c$ of Theorem \ref{theo:main2} can be taken equal to $2^{-13}$. If one want to find the optimal value when $\beta$ is large, we  can get $1/32$ with the first approach.
	\end{rem}

	\subsection{Correlation functions of point processes and proof of Theorem \ref{cor:truncated}.}\label{subsec:correlations}

	Let us recall some basic facts about real-valued point processes that one can find for example in Lenard \cite{lenard_correlation_1973}. A \textit{point configuration} $\gamma$ is a integer-valued Radon measure (which can be identified with a locally finite multiset of points in $\mathbb{R}$).
	We denote by $\mathrm{Conf}(\mathbb{R})$ the space of point configurations. A point configuration $\gamma$ is said to be \textit{simple} if $\gamma(\{x\}) \le 1$ for any $x \in \mathbb{R}$. The space $\mathrm{Conf}(\mathbb{R})$ is Polish when equipped with the topology of \textit{vague convergence} (see e.g.  \cite[Section 15.7]{kallenberg1983}). A sequence $(\gamma_n)_{n\geq1}$ converges vaguely to $\gamma$ if and only if for all continuous compactly supported functions $f:\mathbb{R}\to\mathbb{R}$,
	$$
	\sum_{u\in\gamma_n}f(u) \underset{n\to\infty}{\longrightarrow} \sum_{u\in\gamma}f(u)\,.
	$$
	We endow $\mathrm{Conf}(\mathbb{R})$ with its Borel $\sigma$-algebra. A simple point process $\Gamma$ is  by definition a random variable taking values in $\mathrm{Conf}(\mathbb{R})$ such that $\Gamma$ is simple almost surely. The law of $\Gamma$ is uniquely determined by the collection of Laplace functionals
	$$ \mathbb{E}\Big[\exp\Big(-\sum_{u \in \Gamma} f(u)\Big)\Big]$$
	evaluated on all non-negative continuous compactly supported functions $f$.
	
	The \emph{joint intensity measures} (also called factorial moment measures) of a simple point process $\Gamma$, when they exist, are measures $\rho^{(k)}$ on $\mathbb{R}^k$ defined for any Borel set $B \subset \mathbb{R}^k$ by
	\begin{align*}
		\rho^{(k)}(B) = \mathbb{E}\Big[ \sum_{(x_1, \dots, x_k) \in \Gamma^{\wedge k}} \mathbf{1}_B(x_1, \dots, x_k) \Big] \,,
	\end{align*}
	where $\Gamma^{\wedge k} := \{(x_1, \dots, x_k)\in \Gamma^k,\; x_i \neq x_j \}$ denotes the set of $k$-tuples of \emph{distinct} points of $\Gamma$.
	
	When $B = D_1 \times \cdots \times D_k$ with $D_1, \dots, D_k$ being mutually disjoint Borel subsets of $\mathbb{R}$, this simplifies to
	\begin{align*}
		\rho^{(k)}(D_1 \times \cdots \times D_k) = \mathbb{E}\Big[\prod_{i=1}^k \Gamma(D_i)\Big]\,,
	\end{align*}
	where $\Gamma(D_i)$ denotes the number of points of $\Gamma$ in $D_i$. When one allows for overlaps between the sets $D_j$, the situation is more delicate. For example, taking $B = D^k$ yields the $k$-th factorial moment
	\begin{align*}
		\rho^{(k)}(D^k) = \mathbb{E}\Big[\frac{\Gamma(D)!}{(\Gamma(D) - k)!}\Big] \,.
	\end{align*}
	If $\Gamma(D)$ has exponential tails for all bounded Borel sets $D$, then the measures $\rho^{(k)}$ exist for all $k$. Moreover, under this condition, the family $(\rho^{(k)})_{k \geq 1}$ uniquely determines the law of $\Gamma$: the moments $m_p\defeq \mathbb{E}[\Gamma(D)^p]$  characterize uniquely the law of $\Gamma(D)$ as they grow slower than $(C\,p)^p$, therefore satisfy Carleman's condition 
	$$\sum_{p\geq 1}(m_{2p})^{1/(2p)}=+\infty.$$
	
	If the measures $\rho^{(k)}$ are absolutely continuous with respect to the Lebesgue measure on $\mathbb{R}^k$, we say that the point process admits \emph{correlation functions} (or joint intensities). With an abuse of notation, we denote their densities again by $\rho^{(k)}(x_1,\cdots,x_k)$.
	
	The function $\rho^{(1)}$ is called the \textit{density of states} (or simply intensity). When the point process is translation-invariant, $\rho^{(1)}$ is constant. More generally, translation-invariance implies that $\rho^{(k)}(x_1 + r, \dots, x_k +r)= \rho^{(k)}(x_1, \dots, x_k)$ for all $r \in \R$.
	
	Correlation functions are crucial for computing several interesting observables. For instance, the discrepancy of a bounded Borel set $B$ is given by:
	\begin{align*}
		\text{Disc}(B) &\coloneqq \text{Var}(\Gamma(B)) \\
		&= \int_{B \times B} \big(\rho^{(2)}(x,y)-\rho^{(1)}(x)\rho^{(1)}(y)\big)\mathrm{d}x\mathrm{d}y + \int_B \rho^{(1)}(x)\mathrm{d}x.
	\end{align*}
	
	The existence of correlation functions (as densities, opposed to measures) is a non-trivial result. For the Sine$_\beta$ point process, the existence of the measures $\rho^{(k)}$ is guaranteed by the exponential tails of Sine$_\beta(B)$ for any bounded Borel set $B$ (see Point 4 in Subsection \ref{subsec:carousel}). Furthermore, one can prove that they admit densities for example using the DLR equations obtained in \cite{dereudre_dlr_2021}, we refer to \cite{Malvy_these} for more details.

	\medskip

	We have defined in this paper two quantities to control the behavior of correlation functions when some points are widely separated. The first is the (two clusters) \emph{partially truncated correlation function}:
		$$\rho^{(k)}(x_1,\cdots,x_k)-\rho^{(k_0)}(x_1,\cdots,x_{k_0})\rho^{(k-k_0)}(x_{k_0+1},\cdots, x_k)\,.$$
	The second is the \emph{$k$-point truncated correlation functions}. As mentioned above, they are defined recursively as follows: $\rho_T^{(1)}(x_1) = \rho^{(1)}(x_1)$ and
	\begin{align*}
		\rho_T^{(k)}(x_1,\cdots,x_k)=\rho^{(k)}(x_1,\cdots,x_k)-\sum_{\pi = (J_1, \cdots, J_j),\; \pi \neq \{1,\cdots,k\}}\rho^{(|J_1|)}_T(X_{J_1})\cdots\rho_T^{(|J_j|)}(X_{J_j})\,,
	\end{align*}
	where the sum runs over all the partitions $\pi = \{J_1, \cdots, J_j\}$ of $\{1,\cdots,k\}$ except the trivial one $\{1,\cdots,k\}$ and $X_{J}:=(x_{i})_{i \in J}$. 
	
	Using the Möbius inversion formula on the partially ordered set of partitions of $\{1,\cdots,k\}$, one can express $\rho^{(k)}_T$ in terms of the $\rho^{(i)}$'s as
	\begin{equation}
		\rho_T^{(k)}(x_1,\cdots,x_k)=\sum_{j=1}^k(-1)^{j-1}(j-1)!\sum_{\pi = (J_1, \cdots, J_j)}\rho^{(|J_1|)}(X_{J_1})\cdots \rho^{(|J_j|)}(X_{J_j})\,,\label{RhoTasrho}
	\end{equation}
	where the second sum runs over all partitions $\pi$ with exactly $j$ blocks. 

	Such combinatorial identities are closely related to the Stirling numbers of the second kind $\stirling{k}{j}$, which counts the number of partitions of a $k$-element set into $j$ non-empty blocks, and the ordered Bell numbers $\text{Bell}(k)$ which count the number of ordered partitions of a $k$-element set. They are defined as
	$$\text{Bell}(k)=\sum_{j=0}^k\stirling{k}{j}j!\,.$$
	As $k\to+\infty$, the ordered Bell numbers admit the asymptotic $\text{Bell}(k)\sim \frac{k!}{2(\ln 2)^{k+1}}$, see e.g. \cite{zhang2025explicit} for a detailed overview over cumulants and combinatorics on partitions. 
	
	\medskip
	
	A uniform bound on the partially truncated correlation functions yields control over fully truncated correlation functions, up to a good control on the integrated correlation functions and the bound
	\begin{equation}\label{eq:combinatorics}\sum_{\pi=(J_1,\cdots, J_j)}(j-1)!= \sum_{j=0}^{k}\stirling{k}{j}(j-1)!\leq \text{Bell}(k),\end{equation}
	where the sum runs over all the partitions of $\{1,\cdots,k\}$ (including the trivial partition equal to $\{1,\cdots,k\}$). One can show that the sum on the left hand side is equal to $2\text{Bell}(k-1)$.
	We provide in the rest of the section a proof of the decay of truncated correlation functions. More precisely, we show that Theorem \ref{cor:truncated} is a consequence of Theorem \ref{theo:main2}. 
	
	\begin{proof}
		Consider the intervals $I_i$, $i=1,\cdots,k$. Let $S_1 \sqcup S_2 = \{1,\cdots,k\}$ be a partition of the indices such that the distance between the clusters $A := \cup_{i \in S_1} I_i$ and $B = \cup_{i \in S_2} I_i$ is maximized.
		Recall that we have a super-multiplicative bound on the (integrated) $k$-point correlation function of the form
		\begin{align*}
			\int_{\prod_{j=1}^k I_j}\rho^{(k)}(\textbf{x}) \mathrm{d}\textbf{x} \leq G_k\,,\qquad G_k := C^k k^{k/2}\,,
		\end{align*}
		satisfying $G_{k}G_{k'} \leq G_{k+k'}$. Up to changing the constant $C=C(\beta,\lambda_0)>0$,
		we have the rough \emph{a priori} bound
		\begin{align}\label{eq:rough bound}
			u_k := \int_{\prod_{i=1}^k I_i}	|\rho^{(k)}_T(\textbf{x})|\mathrm{d}\textbf{x} \leq k!\; G_k\,.
		\end{align}
		Indeed, using \eqref{RhoTasrho} and \eqref{eq:combinatorics}, we have
		\begin{align*}
			u_k &\leq  \sum_{\pi=(J_1,\cdots, J_j)}(j-1)!\prod_{l=1}^j\Big(\int_{J_l}\rho^{(|J_l|)}(\mathbf{x}_{J_l})\mathrm{d}\mathbf{x}_{J_l}\Big)\,\leq \ \text{Bell}(k)\,G_k\sim \frac{k!}{(\ln 2)^k}\,G_k.
		\end{align*}
		Let us denote by 
		\begin{align*}
			\mathcal{E}_k := \int_{\textbf{x}_1\in\prod_{i\in S_1}I_i} \int_{\textbf{x}_2\in\prod_{i\in S_2}I_i} 	\big|	\rho^{(k)}(\textbf{x}_1,\textbf{x}_2) - \rho^{(k_0)}(\textbf{x}_1) \rho^{k-k_0}(\textbf{x}_2)\big|\mathrm{d}\textbf{x}
		\end{align*}
		the decoupling of clusters associated to $S_1$ and $S_2$, and thus to $A$ and $B$.
		We write the difference as
		\begin{align*}
			\rho^{(k)}(\textbf{x}_1,\textbf{x}_2) - \rho^{(k_0)}(\textbf{x}_1) \rho^{(k-k_0)}(\textbf{x}_2) = \sum_{\pi \in \Pi(A,B)} \prod_{S \in \pi} \rho_T^{(|S|)}(\mathbf{x}_S)\,,
		\end{align*}
		where $\Pi(A,B)$ denotes the set of partitions $\pi$ connecting $A$ and $B$ that is, partitions containing at least one block $S$ such that the vector $\boldsymbol{x}_S\defeq (x_i)_{i\in S}$ has coordinates in both $A$ and $B$. Isolating the trivial partition $\{1,\cdots,k\}$, we obtain
		\begin{align*}
			\rho_T^{(k)}(\textbf{x}) = \rho^{(k)}(\textbf{x}_1,\textbf{x}_2) - \rho^{(k_0)}(\textbf{x}_1) \rho^{(k-k_0)}(\textbf{x}_2) - \sum_{\pi \in \Pi(A,B),\; \pi \neq \{1,\cdots,k\}} \prod_{S \in \pi} \rho_T^{(|S|)}(\boldsymbol{x}_S)\,.
		\end{align*}
		For each $\pi \in \Pi(A,B)$, $\pi \neq \{1,\cdots,k\}$, we identify a block $S(A,B) \in \pi$ connecting $A$ and $B$ which is of length $s \in \{2,\cdots,k-1\}$. We will first sum on all the valid blocks $S(A,B)$ of length $s$ connecting $A$ and $B$. We have $ \mathcal{S}(k_0, k-k_0, s) := \binom{k}{s} - \binom{k_0}{s} - \binom{k-k_0}{s}$ possible choices for this set. Then for any fixed $S(A,B)$, we bound the remaining product over the other (integrated) terms of the partition $\prod_{S \in (\pi \backslash S(A,B))} \rho^{(|S|)}_T(\boldsymbol{x}_S)$, using \eqref{eq:rough bound}. This gives
		%
		\begin{align}\label{recuk}
			u_k \leq \mathcal{E}_k  + \sum_{s=2}^{k-1} \mathcal{S}(k_0,k-k_0,s)\, u_s\, G_{k-s}\,.
		\end{align}
		Thanks to Theorem \ref{theo:main2}, we have the upper bound
		\begin{align*}
			\mathcal{E}_k \leq C^k k^{k/2} d(A,B)^{-c \frac{\beta}{1+ \beta^2}}\,.
		\end{align*}
		One can check by induction that together with \eqref{recuk}, we obtain 
		\begin{align*}
			u_k \leq C^k \; k! \; k^{k/2} d(A,B)^{-c \frac{\beta}{1+ \beta^2}}\,,
		\end{align*}
		for a different $C$ (depending on $\gl_0$ and $\beta$) which gives our result. Indeed, if we suppose that the inequality holds for all $s  <k$, then 
		\begin{align*}\label{recuk}
			u_k &\leq \Big(C^k k^{k/2} + \sum_{s=2}^{k-1}  C^k\,\binom{k}{s} s! \,s^{s/2} \, (k-s)^{(k-s)/2} \Big) d(A,B)^{-c \frac{\beta}{1+ \beta^2}}\\
			&\leq C^{k} \Big(k^{k/2} + k! \sum_{s=2}^{k-1} s^{s/2} (k-s)^{(k-s)/2}\Big) d(A,B)^{-c \frac{\beta}{1+ \beta^2}}\,.
		\end{align*}
		The maximum of $s^{s/2} (k-s)^{(k-s)/2}$ is at one of the boundaries $s=2$ or $s=k-1$, for which we get respectively $2 (k-2)^{(k-2)/2}$ or $(k-1)^{(k-1)/2}$. We obtain the desired bound. 
			\end{proof}
		\begin{rem}
			Note that we used the bound $K(k,\infty)$ for the proof instead of the optimal one. Indeed, the sum over all partitions against the super-multiplicative bound $G_{k-s}$ dominates the asymptotics, leading to $k!\; G_k$ behavior in any case.
		\end{rem}

	\section{Behavior after large times, Proof of Lemma  \ref{lem:finitetime}}
\subsection{Strategy of proof} The goal of this section is to prove Lemma \ref{lem:finitetime} which states that for large time $T$, the diffusion $\alpha_\gl$ is close to its limiting value with high probability. 

For a fixed $\gl$, the diffusion $\alpha_\gl$ satisfies the following Stochastic Differential Equation (SDE)
\begin{align*}
	d \alpha_\gl(t) = \gl f(t) dt + 2 \sin(\alpha_\gl(t)/2) dB(t)\,,
\end{align*}
where $B$ is a real standard Brownian motion.


Suppose that for some large time $T$, the point $\alpha_\gl(T)$ is at some distance from its limiting value. We break this event into two possibilities about the time interval $[T/2,T]$:
\begin{itemize}
	\item  Either the diffusion $\alpha_\gl$ did not get very close to any multiple of $2\pi$ during the time interval $[T/2,T]$ ; 
	\item Or the diffusion $\alpha_\gl$ reached a value very close to some multiple of $2\pi$ during the interval $[T/2,T]$. In this case, it has to move away from this multiple afterwards. Indeed either the multiple corresponds to $\alpha_\gl(+\infty)$ and it has to be at some distance of this point at time $T$, or it does not correspond to $\alpha_\gl(+\infty)$ and $\alpha_\gl$ then has to reach another multiple of $2\pi$.
\end{itemize}
We will prove that both events have small probability. On these two events, one can hint the behavior of the diffusion $\alpha_\lambda$ using an appropriate Girsanov's change of measure, see \eqref{eq:girsanov}. In Figure \ref{fig:girsanov}, we sampled in blue the trajectories of $\alpha_\lambda$ under this change of measure, as compared to their initial trajectories in red.

		\begin{figure}[h]\centering
		\subfloat{\includegraphics[scale=0.3]{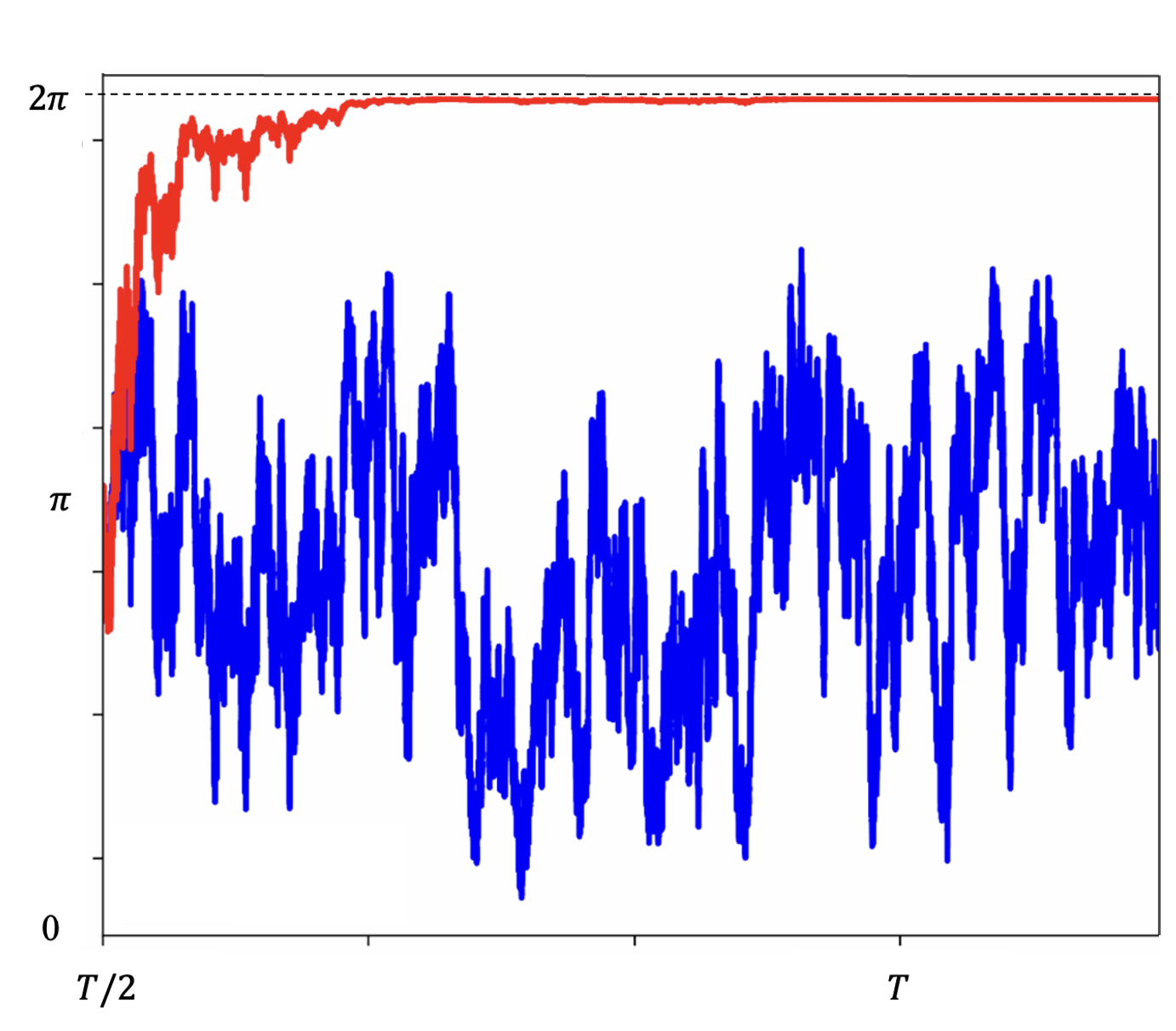}}
		\hspace{0,2cm}
		\subfloat{ \includegraphics[scale=0.3]{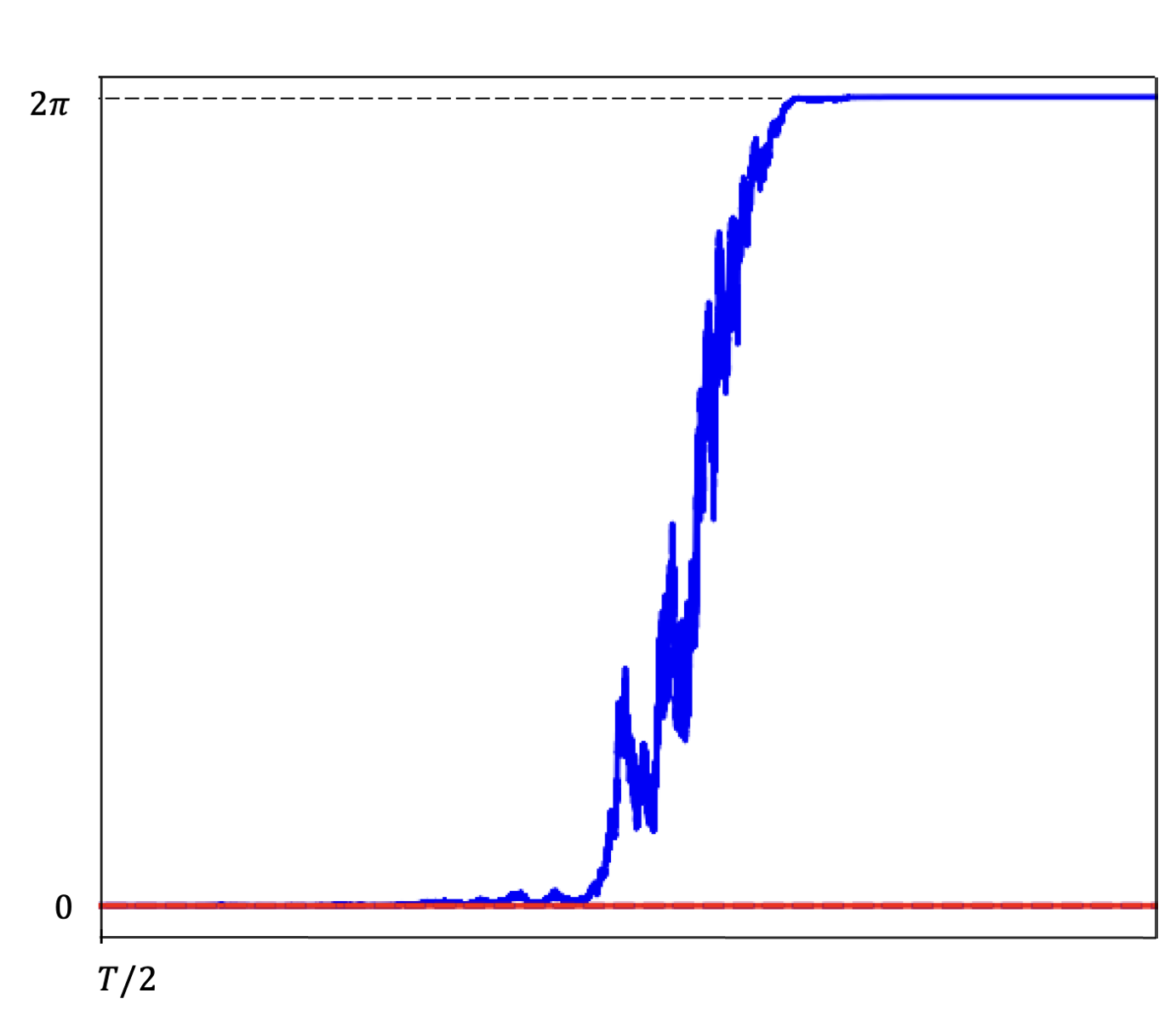}}
		\caption{\small{(color online). On the left figure, the blue path is a sample of the trajectory of $\alpha_\lambda$ started at $\pi$ under the Girsanov change of measure \eqref{eq:girsanov} with parameters $\lambda=1$, $\beta=200$. On the right figure, the blue path is a sample of the trajectory of $\alpha_\lambda$, started in $0$ at time $T/2$ ``conditioned to attain $2\pi$'' (that is with the Girsanov change of measure of  \eqref{eq:girsanov} until it reaches $\pi$ and then back to the initial measure), with $\lambda=1$, $\beta=4$. In both figures, the red path is a sample of the trajectory of $\alpha_\lambda$ without any change of measure and with the same parameters and driving Brownian motion.}}\label{fig:girsanov}
	\end{figure}
	\medskip
	To analyze the behavior of $\alpha_\gl$, it is convenient to introduce the process 
	\begin{align*}
		R := \ln(\tan(\{\alpha_\gl\}_{2\pi}/4))
	\end{align*} 
	where $\{x\}_{2\pi} = x - (2\pi)\,\lfloor x/(2\pi) \rfloor$ denotes the fractional part of $x$ modulo $2\pi$. It evolves according to the following SDE:
	\begin{align}
		\mathrm{d}R(t)=\frac{\lambda}{2} f(t)\cosh R(t) \mathrm{d}t +\frac{1}{2}\tanh R(t)\mathrm{d}t+\mathrm{d}B(t)\,.
	\end{align}

	The process $R$ starts at $-\infty$ at time $0$ and may explode to $+\infty$ at some finite time (it does so each time $\alpha_\gl$ reaches a multiple of $2\pi$). Once this happens, it immediately restarts at $-
	\infty$, which ensures that $R$ is well defined for all time $t \geq 0$. The advantage of studying $R$ instead of $\alpha_\gl$ is that it is a diffusion with a constant noise, evolving in a (non-stationary) potential. This potential has a well located around $- \beta t/4$ at time $t$ (which corresponds to the values $(2\pi \N)^+$ for $\alpha_\gl$ for large times) and an unstable equilibrium around $0$ (corresponding to $\pi$ for the diffusion $\{\alpha_\gl\}_{2\pi}$). Moreover, on the segment well included in $(-\beta t/4,0)$ (resp. $(0,\beta t/4)$), the drift is almost constant equal to $-1/2$ (resp. $+1/2$). This explains why the diffusion may converge to a multiple of $2\pi$ from below for $\beta >2$, which corresponds to a trajectory of $R$ going growing to $+\infty$ thanks to its $+1/2$ drift without reaching the slope in $(0,\beta t/4)$ (which would lead to an explosion in finite time). See Figure \ref{fig:potential} for a plot of the potential at some large time $t$.
	
		\begin{figure}[!h]
		\includegraphics[width = 7cm]{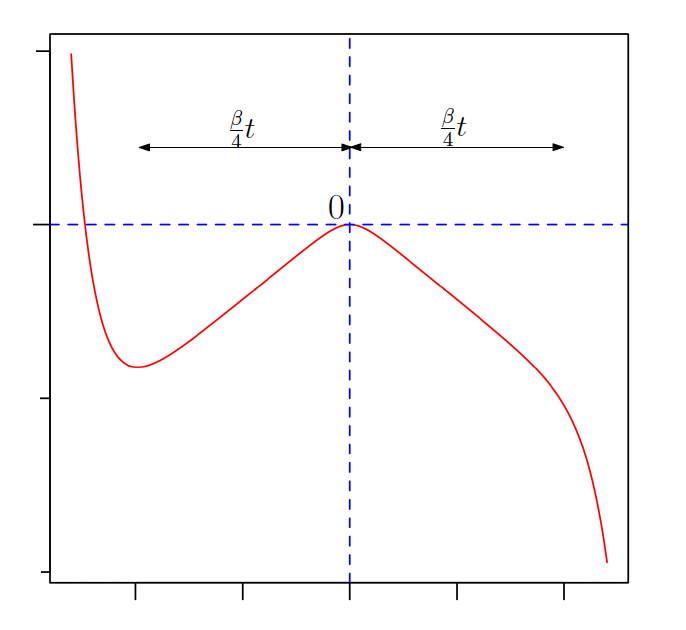}
		\caption{Plot of the potential of the diffusion $R$ at a large time $t$, for $\lambda=1$.}\label{fig:potential}
	\end{figure}
	
	\medskip
	
	Let $r_0 := (1\wedge\beta)T/64$ and define the following stopping time for $R$
	\begin{align*}
		&\sigma :=\inf\{s \geq T/2,\; |R(s)| \geq r_0\}\,,\\
		&\tau := \inf\{s \geq \sigma,\; |R(s)| \leq r_0 /2\}\,.
	\end{align*}
	We will focus on the proof in the case $\lambda\leq 1$. For the more general case $1 \leq \lambda \leq  \lambda_0$, it is sufficient to wait an extra time $\log \lambda$ to return to the case $\lambda\leq 1$.
	
	We state in the following lemma that it is unlikely for $R$ to stay away from $\pm \infty$ during a long time interval $[T/2,T]$:
	\begin{Lemma}[Repulsion of the region near $0$]\label{lem:repulsionaway0} For all $\beta,\lambda_0>0$, there exists $C=C(\beta,\lambda_0)$ such that for all $\lambda\leq \lambda_0$, $ T\geq 1$,
		\begin{align*}
			\P\big(\sigma \geq T\big) \leq C \exp\big(-(1\wedge\beta) T/128\big)\,.
		\end{align*}
	\end{Lemma}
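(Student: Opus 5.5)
The plan is to write $\{\sigma\ge T\}$ as the event that the diffusion $R$ stays inside $[-r_0,r_0]$ during the whole window $[T/2,T]$. Any explosion of $R$ to $+\infty$ (followed by a restart at $-\infty$) forces $|R|\ge r_0$ at some time, so this reformulation is exact. By the Markov property at time $T/2$, it is enough to bound, uniformly over $x\in[-r_0,r_0]$, the probability $p(x)$ that $R$ started from $x$ at time $T/2$ remains in $[-r_0,r_0]$ up to time $T$. As in the text, I assume $\lambda\le 1$; the case $1\le\lambda\le\lambda_0$ follows by shifting time by $\log\lambda_0$ and enlarging $C$.

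\textbf{Step 1: the drift term is negligible inside the strip.} For $s\ge T/2$ and $|R(s)|\le r_0$ we have
$$0\le \tfrac{\lambda}{2}f(s)\cosh R(s)\le \tfrac{\beta}{8}e^{-\beta T/8}e^{r_0}.$$
Since $r_0=(1\wedge\beta)T/64\le \beta T/64$, this is at most $\tfrac{\beta}{8}e^{-7\beta T/64}$, which is exponentially small.

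\textbf{Step 2: change of measure to Brownian motion.} Let $b(s,y)=\tfrac{\lambda}{2}f(s)\cosh y+\tfrac12\tanh y$ be the full drift. On the event that $R$ stays in the strip, $b$ is bounded, so Girsanov's theorem applies to $R$ stopped at its exit from the strip. It gives
$$p(x)=\mathbb{E}_x\Big[\mathbf 1_{\{|B|\le r_0 \text{ on }[T/2,T]\}}\exp\Big(\int b\,\mathrm{d}B-\tfrac12\int b^2\,\mathrm{d}s\Big)\Big],$$
where $B$ is a Brownian motion started at $x$. I will not estimate the stochastic integral directly. Instead I use the Doob $h$-transform identity with $h(y)=\cosh^{1/2}y$. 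The point is that $h'/h=\tfrac12\tanh$ is exactly the main part of the drift, and by Itô's formula applied to $\ln h(B)$,
$$\int\tfrac12\tanh B\,\mathrm{d}B-\tfrac12\int\tfrac14\tanh^2B\,\mathrm{d}s=\ln\frac{h(B_T)}{h(B_{T/2})}-\int\tfrac12\frac{h''}{h}(B_s)\,\mathrm{d}s.$$
A direct computation gives $\tfrac12 h''/h=\tfrac18+\tfrac18\,\mathrm{sech}^2\ge \tfrac18$. On the strip, $h(B_T)/h(B_{T/2})\le e^{r_0/2}$. So the main part of the density is at most $e^{r_0/2-T/16}$.

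\textbf{Step 3: the perturbative part.} The remaining part of the Girsanov exponent is
$$\int g\,\mathrm{d}B-\int g\cdot\tfrac12\tanh B\,\mathrm{d}s-\tfrac12\int g^2\,\mathrm{d}s,\qquad g=\tfrac{\lambda}{2}f\cosh B.$$
By Step 1, the $\mathrm{d}s$-integrals are $O(T\beta e^{-7\beta T/64})=O_\beta(1)$. To handle the martingale $\int g\,\mathrm{d}B$, I apply Cauchy--Schwarz and split the expectation into two factors. The first is the exponential martingale $\exp(2\int g\,\mathrm{d}B-2\int g^2\,\mathrm{d}s)$, whose expectation is at most $1$, with the cost of the extra $\int g^2$ again $O_\beta(1)$. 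The second factor is the square of the $h$-transform weight from Step 2, which is bounded on the strip by $e^{r_0-T/8}$ by the same computation. Altogether this yields
$$\P(\sigma\ge T)\le C(\beta)\,e^{r_0/2-T/16}\le C\,e^{-T/16+T/128}\le C\,e^{-(1\wedge\beta)T/128}.$$
This bound is in fact uniform in $\beta$, since the $\tanh$ drift does not depend on $\beta$. The only $\beta$-dependence enters through the requirement $r_0\le\beta T/64$ used in Step 1.

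\textbf{Main obstacle.} The difficulty is that the strip has width of order $T$. A plain Brownian comparison (small-ball estimate) therefore gives no exponential decay at all. The decay must come from the repulsive $\tfrac12\tanh R$ drift away from $0$, and the $h$-transform with $h=\cosh^{1/2}$ is what turns this repulsion into the killing rate $\tfrac18$. I expect the delicate point to be balancing the boundary factor $e^{r_0/2}$ and the $\lambda f\cosh R$ perturbation against this rate. This is exactly why $r_0$ has to be a small multiple of $(1\wedge\beta)T$.
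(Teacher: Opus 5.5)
Your argument is correct, and it takes a genuinely different route from the paper. The paper uses the stopping times $U_n,V_n$ to split $[T/2,T]$ into the set $I_0$ where $|R|\le 1$ and the set $I_1$ where $|R|\ge 1/2$. When $|I_0|\ge T/4$, it applies Girsanov against the process $\tilde R$ with the \emph{reversed} drift $-\frac12\tanh\tilde R$. This produces the factor $\exp\big(-\frac12\int(1-\tanh^2\tilde R)\,\mathrm{d}t\big)$, so decay is gained only from time spent near $0$ (Lemma~\ref{lem:timespentnear0}). When $|I_1|\ge T/4$, it compares $|R|$ on the concatenated excursions with a Brownian motion with constant drift $\tanh(1/2)/2$. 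You instead compare directly with driftless Brownian motion and use the Doob transform with $h=\cosh^{1/2}$. This converts the repulsive drift $\frac12\tanh$ into a killing rate $\frac18+\frac18\,\mathrm{sech}^2\ge\frac18$ valid on the whole strip, at the boundary cost $e^{r_0/2}$, so both regimes are handled at once; the $\lambda f\cosh$ term is absorbed by an exponential-martingale bound. What your route buys is a shorter proof with no excursion bookkeeping or time change. It also gives the stronger estimate $Ce^{-T/16+r_0/2}\le Ce^{-7T/128}$, whose rate does not degenerate as $\beta\to0$: the factor $1\wedge\beta$ enters only through the choice of $r_0$, to keep $\lambda f\cosh R$ negligible. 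The paper's decomposition, in exchange, follows the heuristic picture of an unstable equilibrium at $0$ flanked by two almost constant-drift regions. Two small simplifications are available to you. First, the reduction to $\lambda\le1$ is unnecessary, since your Step~1 holds verbatim for $\lambda\le\lambda_0$ with an extra factor $\lambda_0$ absorbed into $C(\beta,\lambda_0)$. Second, Cauchy--Schwarz is not needed: the $h$-transform weight is bounded deterministically by $e^{r_0/2-T/16}$ on the strip event, so $p(x)$ is at most $e^{r_0/2-T/16}$ times the expectation of the nonnegative perturbative factor, which is $O(1)$.
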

	The proof of Lemma \ref{lem:repulsionaway0} can be found in Subsection \ref{subsec:lemrepulsion}.
	Then we state that when $R$ reaches some value near $\pm \infty$, it is difficult to leave it. 
	
	\begin{Lemma}[Attraction to the region near $\pm \infty$]\label{lem:attractioninfty} For all $\beta,\lambda_0>0$, there exists $C=C(\beta,\lambda_0)>0$ such that for all $\lambda\leq \lambda_0$, $T\geq 1$,
		\begin{align*}
			\P\big(\sigma \leq T,\; \tau < +\infty\big) \leq C \exp\big(-(1\wedge\beta) T/128\big)\,.
		\end{align*}
	\end{Lemma}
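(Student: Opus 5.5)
At time $\sigma$ we have $|R(\sigma)|\ge r_0$, while $\tau<\infty$ means that $R$ returns to $[-r_0/2,r_0/2]$ afterwards. I will bound separately the two cases $R(\sigma)\ge r_0$ and $R(\sigma)\le -r_0$. In both I compare $R$ after time $\sigma\ge T/2$ with a Brownian motion with drift $\pm 1/2$, using the strong Markov property at $\sigma$. The key inputs are read directly off the SDE
\[
\mathrm{d}R=\tfrac{\lambda}{2}f(t)\cosh R\,\mathrm{d}t+\tfrac12\tanh R\,\mathrm{d}t+\mathrm{d}B .
\]
On $\{|R|\ge r_0/2\}$ we have $\tanh R$ within $2e^{-r_0}$ of $\pm1$. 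For $t\ge T/2$ we have $\lambda f(t)\cosh R\le \lambda\beta e^{-\beta t/4+|R|}/4$. This term is negligible as long as $|R|\le \beta t/8$, and it is positive, hence only helps, on the positive side.

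\emph{Case $R(\sigma)\ge r_0$.} On $[\sigma,\tau]$ the drift of $R$ is at least $\tfrac12\tanh(r_0/2)\ge 1/4$, since the $\cosh$ term is nonnegative. By comparison, $R(\sigma+\cdot)$ dominates $r_0+B'(\cdot)+s/4$ until $R$ either explodes to $+\infty$ or reaches $r_0/2$. Reaching $r_0/2$ means a Brownian motion with drift $1/4$ falls by $r_0/2$, which has probability $e^{-2\cdot(1/4)\cdot r_0/2}=e^{-r_0/4}$. There is one subtlety: after an explosion, $R$ restarts at $-\infty$ and could then come back to $[-r_0/2,r_0/2]$. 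I must therefore also rule out an explosion after $T/2$ followed by an ascent from $-\infty$. This is the same as the second case, because immediately after the restart $R\le -r_0$ and the analysis starts afresh.

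\emph{Case $R\le -r_0$ (including restarts).} Here the drift is at most $-\tfrac12\tanh(r_0/2)+\tfrac{\lambda\beta}{8}e^{-\beta t/4}\cosh R$. As $R\to-\infty$ the $\cosh$ term grows, and this is the main obstacle: near $R\approx-\beta t/4$ it balances the $-1/2$ drift and creates the potential well, so the drift is not uniformly negative. To handle this I work in the window $[-\beta t/4+1,\,-r_0/2]$. Because $r_0=(1\wedge\beta)T/64$ is much smaller than $\beta T/8\le\beta t/4$, this window is nonempty, and on it the $\cosh$ term is at most a small constant, say $\lambda\beta e^{-1}/8$, which I need below $1/4$. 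This holds after waiting an extra time $O(\log\lambda_0)$, which is the reduction to $\lambda\le1$ already mentioned. Then $R$ behaves like a Brownian motion with drift at most $-1/4$. To reach $-r_0/2$ starting from at most $-r_0$, it must climb a height $r_0/2$ against this drift, so by the gambler's-ruin formula the probability is at most $e^{-r_0/4}$. Below the window, the well pushes $R$ back up, but only into the window. The chance of ever climbing from the bottom of the window to $-r_0/2$ is also bounded by $e^{-r_0/4}$, since the barrier height is at least $r_0/2$ at every time.

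Each excursion therefore costs $e^{-r_0/4}=e^{-(1\wedge\beta)T/256}$. I then sum over the number of successive explosions, noting that each explosion requires such a climb (from the well, past $-r_0/2$), so the probabilities decay geometrically. A union bound gives a total of $Ce^{-r_0/4}$. Finally I need to match the stated exponent $T/128$. For this I either take the threshold $r_0/2$ as the level crossed with rate $1/2$ drift (whose ruin probability is $e^{-r_0/2}$, using $\tanh\approx1$ up to $O(e^{-r_0})$ corrections) or adjust constants. Beyond the drift control in the window, the only delicate point is making the comparison uniform in time $t\ge T/2$, since the time-dependent bound on the drift is monotone and the comparison can be run from $T/2$ onward.
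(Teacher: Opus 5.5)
Your treatment of the case $R(\sigma)\ge r_0$, including the restart at $-\infty$ after an explosion, is essentially the paper's. The gap is on the negative side. You assert that the chance of \emph{ever} climbing from the bottom of the window to $-r_0/2$ is at most $e^{-r_0/4}$ ``since the barrier height is at least $r_0/2$ at every time''. The gambler's-ruin formula controls only a single excursion, killed when it leaves the window through its lower end. Here the lower end is not absorbing: the well keeps pushing $R$ back into the window. So on the infinite horizon $[\sigma,\infty)$ there are infinitely many fresh attempts, and a barrier that is merely bounded below uniformly in time gives no bound at all. For instance, a diffusion with drift $-1/4$ reflected at a fixed level below $-r_0/2$ reaches $-r_0/2$ almost surely. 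The event is rare only because the well recedes linearly, like $-\beta t/4$, so an attempt made at time $t$ costs about $\exp(-(\beta t/4-r_0/2)/2)$. You must exploit this growth and organize the attempts so their costs can be summed, and your proposal never does this. The final ``sum over explosions'' does not fill the hole. At most one explosion can occur in $[\sigma,\tau)$, since a second one would require crossing $[-r_0/2,r_0/2]$. The difficulty is the unbounded number of returns from the well, not the number of explosions.

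The paper's ladder is built to handle exactly this. Set $r_k=(1+k\beta/16)r_0$, $T_k=T/2+kr_0$, and let $\tau_k$ be the hitting time of $-2r_k$ after $T_k$. Then the window $(-2r_k,-r_0/2)$ stays at distance of order $r_0$ above the well for all $t\ge T_k$, so the drift there is $-1/2$ up to exponentially small errors. Started from $-r_k$ at time $T_k$, one of three things happens:
\begin{enumerate}
\item $R$ climbs to $-r_0/2$ before $-2r_k$, with probability at most $Ce^{-r_k/2}$;
\item it falls to $-2r_k$ before $T_{k+1}$, which is exponentially unlikely in $r_k$, since the drift is at least $-1/2$ and the available time is only $r_0$;
\item it reaches $-2r_k\le -r_{k+1}$ after $T_{k+1}$, and then the strong Markov property and monotonicity restart it from $(T_{k+1},-r_{k+1})$.
\end{enumerate}
The costs form a geometric series in $k$. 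Alternatively, you could repair your own argument with a supermartingale such as $e^{\max(R_t,\ell(t))/2}$, where $\ell(t)$ is the moving window bottom. You would then need to control the local time accumulated at $\ell$ per unit time, whose contribution carries the integrable weight $e^{\ell(t)/2}$.

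Two smaller points. First, the reduction to $\lambda\le 1$ does not give $\lambda\beta e^{-1}/8<1/4$ when $\beta$ is large, so the window bottom must be placed at $-\beta t/4+c(\beta,\lambda_0)$. Second, to obtain the exponent $r_0/2$, i.e.\ $T/128$, rather than $(1-2c)r_0/2$, the $\cosh$ term must be $o(1/r_0)$ on the window. That requires the window to end far above the well, not at distance $1$.
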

	The proof of Lemma \ref{lem:attractioninfty} can be found in Subsection \ref{subsec:prooflemattractioninfty}.
	Using the same proof, we can obtain better estimates for large $\beta$, namely $\exp(-c\beta T)$. the constant polynomial decay for large $\beta$ in Theorem \ref{theo:main2} comes from Lemma \ref{lem:repulsionaway0}.	
	\medskip
	
	Using that when $r \gg 1$, $4 \arctan (e^r) = 2\pi- 4 e^{-r} + O(e^{-3r})$ and $4 \arctan (e^{-r}) = 4 e^{-r} + O(e^{-3r})$, Lemma \ref{lem:repulsionaway0} and Lemma \ref{lem:attractioninfty} about $R$ easily translate to $\alpha_\gl$. Noting that $4e^{-r_0/2}$ is smaller than $\pi/2$ for large times $T$, Lemma \ref{lem:finitetime} is corollary of these lemmas.
	
	In the following,  we will denote by $\mathbb{P}_{x,t}$ the probability measure associated with the process satisfying $R(t)=x$. Note that we have $\mathbb{P}=\mathbb{P}_{-\infty,0}$.

	\subsection{Attraction to the region near $\pm \infty$: proof of Lemma \ref{lem:attractioninfty}.} \label{subsec:prooflemattractioninfty}
	\phantom{blabla.}

	\medskip
	On the probability $\mathbb{P}_{x,t}$, we define $\sigma$ to be the first hitting time of $\pm r_0$ of the diffusion $R$ after time $t$, and $\tau$ the first hitting time of $\pm r_0/2$ after $\sigma$. Let us introduce the first explosion time $\tau_\infty$ of $R$ after time $\sigma$. We also define the stopping times $\sigma_+$ (resp. $\sigma_-$) as the first hitting time of $R$ of $r_0$ (resp. $-r_0$) after time $t$.

	\medskip

	We aim to bound $ \mathbb{P}\big(\sigma\leq T,\; \tau< \infty \big)$  from above. Using the monotonicity properties of the diffusion $R$, it is sufficient to bound
	\begin{align}\label{boundpositif}
		\sup_{r \geq r_0} \sup_{t \in [T/2,T]} \mathbb{P}_{r,t}\big(\tau< \infty, \tau_\infty > \tau\big)
	\end{align}
	and
	\begin{align}\label{boundneg}
		\mathbb{P}_{-r_0,T/2}\big(\tau< \infty \big)\,.
	\end{align}

	To justify this fact, we divide the event $\{\sigma\leq T,\; \tau< \infty\}$ according to the position of the diffusion $R$ at time $T/2$.  
	
	\begin{itemize}
		\item \emph{Case $R(T/2) \leq -r_0$:} Here, $\sigma=T/2$. By monotonicity, we obtain
		\begin{align*}
			\mathbb{P}\big(R(T/2) \leq -r_0,\; \sigma\leq T,\ \tau< \infty \big)
			&\leq \mathbb{P}_{-r_0,T/2}\big(\tau< \infty \big) \,.
		\end{align*}
		\item \emph{Case $R(T/2) \geq r_0$:} Similarly, $\sigma=T/2$ and
		\begin{align*}
			\mathbb{P}\big(R(T/2) \geq r_0,\; \sigma\leq T,\ \tau< \infty \big) &\leq \sup_{r \geq r_0} 	\mathbb{P}_{r,T/2}\big(\tau< \infty \big) \\
			&\leq \sup_{r \geq r_0} 	\mathbb{P}_{r,T/2}\big(\tau< \infty, \tau_\infty < \tau\big) + 	\mathbb{P}_{r,T/2}\big(\tau< \infty, \tau_\infty > \tau\big)\,\\
			&\leq \mathbb{P}_{-r_0,T/2}\big(\tau< \infty \big) +  \sup_{r \geq r_0} 	\mathbb{P}_{r,T/2}\big(\tau< \infty, \tau_\infty > \tau\big)\,.
		\end{align*}
		\item \emph{Case $-r_0 < R(T/2) <r_0$: }
		If $R$ hits $-r_0$ before $r_0$, we have
		\begin{align*}
			\mathbb{P}\big(|R(T/2)|< r_0,\;\sigma_- = \sigma\leq T,\, \tau< \infty\big) 
			&\leq  \P_{-r_0,T/2}(\tau< \infty)\,.
		\end{align*}
		Conversely, if $R$ hits $r_0$ before $-r_0$, we have
		\begin{align*}
			\mathbb{P}\big(|R(T/2)| < r_0,\;\sigma_+ = \sigma\leq T,\, \tau<& \infty \big) \leq \sup_{t \in [T/2,T]} \P_{r_0,t}(\tau< \infty) \\
			\leq &\sup_{t \in [T/2,T]}\mathbb{P}_{r_0,t}\Big(\tau< \infty, \tau_\infty < \tau\Big) + 	\mathbb{P}_{r_0,t}\big(\tau< \infty, \tau_\infty > \tau\big)\,.
		\end{align*}
	\end{itemize}

	Let us bound \eqref{boundpositif} first.
	For $r \geq r_0$ and $t \in [T/2,T]$, on the event $\{\tau< \infty, \tau_\infty > \tau\}$, the diffusion remains in $[r_0/2,+\infty)$ where the drift is bounded below by $1/2 -e^{-r_0/2}$. Hence, this probability can be bounded by the probability that a Brownian motion with constant drift $h_1=1/2-e^{-r_0/2}$ starts at $r_0$ and hits $r_0/2$ in a finite time which is equal to $\exp(- h_1 r_0)$. Since $r\mapsto re^{-r/2}$ remains bounded, we get the upper bound $Ce^{-r_0 /2}$.\\
	

	Let us turn to \eqref{boundneg}. We have to prove that when $R$ starts from $-r_0$ at time $T/2$, it reaches $-r_0/2$ with an exponentially small probability. This would be the case if the diffusion $R$ was a Brownian motion with constant drift $-1/2$. The difficulty here is that when $\beta$ is small (smaller than $2$), $R$ typically reaches the bottom of the well $-\beta t/4$ in a finite time where it cannot be compared anymore to a Brownian motion with constant drift.

	To circumvent this issue, introduce the sequence of positions $r_k := (1+ k (\beta/16)) r_0$ and times $T_k = T/2 + k r_0$ for $k \geq 0$. Define the stopping times $\tau_k := \inf\{s \geq T_k,\; R(t) = -2 r_k\}$. Note that with our choices, we have $2 r_k \leq \beta T_k/4$ and $r_{k+1}\leq 2 r_k$. We have the following upper bound:
	\begin{align*}
		\P_{T_k,-r_k}(\tau < \infty) &\leq  \P_{T_k,-r_k}(\tau < \tau_{k}) + \P_{T_k,-r_k}(T_{k+1}<\tau_k < \tau < \infty) + \P_{T_k,-r_k}( \tau_k < T_{k+1} \wedge \tau), \\
		&\leq \P_{T_k,-r_k}(\tau < \tau_{k}) +\P_{T_{k+1}, -r_{k+1}}( \tau < \infty) + \P_{T_k,-r_k}( \tau_k < T_{k+1}\wedge \tau) \,.
	\end{align*}
	Note that we use the strong Markov property for the second line and the monotonicity of the diffusion to restart at $-r_{k+1}$ instead of $-2r_k$.
	
	On the event $\{T_k\leq t\leq \tau_k< \tau\}$, over the probability $\mathbb{P}_{T-k,-r_k}$, $R$ remains in $(-2r_k,-r_0/2)$: its drift is bounded below by $-1/2$ and above by $h_2\defeq-1/2+e^{-2r_0}$. Comparing with Brownian motions with such drifts, we can easily bound the two terms
	\begin{align*}
		\P_{T_k,-r_k}(\tau < \tau_{k}) &\leq C\exp(- r_k/2), \\
		\P_{T_k,-r_k}( \tau_k < T_{k+1} ,\; \tau_k < \tau) & \leq \exp(-r_k/2)\,.
	\end{align*}
	
	Indeed, the probability in the first inequality is smaller than the probability that a Brownian motion with constant drift $h_2$ starts from $0$ and attains $r_k-r_0/2$ in finite time, which is smaller than $e^{-h_2 r_k}$. We conclude since $r_ke^{-r_k/2}$ remains bounded by $C>0$. For the second inequality, remark that $\tau_k-T_k$ is smaller than the arrival time in $-r_k$ of a Brownian motion with constant drift $-1/2$ started at $0$. We then obtain the upper bound by integrating its probability density function up to times $T_{k+1}-T_k$, given by formula 
	$$ t\mapsto\frac{r_k}{\sqrt{2\pi t^3}}e^{-(r_k-t/2)^2/2t}.$$
	
	It suffices now to show that $\P_{T_{k}, -r_{k}}( \tau < \infty) \to_{k \to \infty} 0$ to conclude. This follows from a comparison with a Brownian with drift $-\beta /8$.
	\qed
	
	\subsection{Proof of Lemma \ref{lem:repulsionaway0}}\label{subsec:lemrepulsion}
	
	We examine in this paragraph the probability that $|R(t)|$ stays below $r_0 = \mu T$, $\mu=(1\wedge\beta)/64$, during the time interval $[T/2,T]$.

	The potential in which $R$ evolves has an unstable equilibrium around $0$, therefore it will not spend too much time there. When $R$ reaches high positive (resp. negative) values, it is very close to a Brownian motion with drift $1/2$ (resp. $-1/2$): it will attain even higher norms using those drifts with high probability. 
	
	To control the time spent near $0$, we introduce the following stopping times. Let $U_0 := T/2$,
	\begin{align*}
		V_1&\defeq\inf\{s\geq U_0,\; |R(s)|\geq 1\}
	\end{align*}
	and, for all integer $n\geq 1$, 
	\begin{align*}
		U_{n}\defeq\inf\{s\geq V_{n},\; |R(s)| = 1/2\}\,,\quad 
		V_{n+1}\defeq\inf\{s\geq U_n,\;|R(s)|= 1\}\,.
	\end{align*}
	Define the time set
	\begin{align*}
		I_0 :=  \bigcup_{n=0}^{+\infty} \Big[U_n,V_{n+1} \Big) \cap [T/2,T]
	\end{align*}
	and its complement over $[T/2,T]$
	\begin{equation}\label{escape intervals}
		I_1:=\bigcup_{n=1}^{+\infty} \Big[V_n,U_n \Big) \cap [T/2,T]\,.
	\end{equation}
	Note that $|R|$ is smaller than $1$ over $I_0$ and greater than $1/2$ over $I_1$.
	
	\medskip
	
	We first control the whole time spent by $|R|$ below $1$ in $[T/2,T]$ on the studied event. This is the content of the next lemma.
	\begin{Lemma}[Time spent near $0$]\label{lem:timespentnear0}
		For all $T \geq 1$, we have
		\begin{align*}
			\P\Big(\int_{T/2}^{T} 1_{|R(t)| \leq 1} dt \geq T/4,\; \sup_{t\in[T/2,T]} |R(t)| \leq \mu T\Big) \leq C\exp(-(1\wedge\beta)T/128)\,.
		\end{align*}
	\end{Lemma}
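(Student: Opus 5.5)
We plan to show that each visit of $R$ to $[-1,1]$ during $[T/2,T]$ is short, and that on the event $\{\sup_{[T/2,T]}|R|\le \mu T\}$ there are few such visits. Then we combine the two facts with exponential tail bounds.

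\emph{Visits are short.} By the strong Markov property at each $U_n$, the excursion $[U_n,V_{n+1})$ is the exit time of $R$ from $(-1,1)$ started at $\pm1/2$. In this region the drift $\frac{\lambda}{2}f(t)\cosh R+\frac12\tanh R$ is bounded (using $\lambda\le\lambda_0$), and the noise is a standard Brownian motion. Hence, uniformly in the starting point and in time, the exit time is dominated by a random variable with an exponential tail: $\mathbb{P}(V_{n+1}-U_n>s\mid\mathcal F_{U_n})\le Ce^{-\kappa s}$ for some $\kappa>0$. Alternatively, Girsanov with the bounded drift gives the comparison with the exit time of Brownian motion from an interval of length $2$. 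It follows that $\E[e^{\theta(V_{n+1}-U_n)}\mid\mathcal F_{U_n}]\le 1+\delta(\theta)$, where $\delta(\theta)\to0$ as $\theta\to0$.

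\emph{Few visits.} On the event considered, each escape interval $[V_n,U_n)$ must end by a return from $|R|=1$ to $|R|=1/2$ before $|R|$ reaches $\mu T$, unless the interval is the last one. For $|R|\ge1/2$ the drift points away from $0$ and has size at least $\frac12\tanh(1/2)>0$: the $\tanh$ term has the right sign, and the $\cosh$ term is positive, which only helps on the positive side. On the negative side we can ignore the $\cosh$ term, since $\lambda f(t)\le \lambda_0\beta e^{-\beta T/8}/4$ is tiny for $t\ge T/2$. It then looks like the bound fails on the negative side near the well bottom $-\beta t/4$. However, since $\mu T\le \beta T/64<\beta t/4$, the constraint $|R|\le\mu T$ keeps us far above the well, where the drift is indeed about $-1/2$. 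So by comparison with a Brownian motion with drift $h=\frac12\tanh(1/2)$, each escape returns with probability $p<1$ before going to infinity. More precisely, the probability of returning to $1/2$ starting from $1$ is at most $e^{-2h\cdot(1/2)}=:p<1$, uniformly. Hence the number $N$ of completed visits in $[T/2,T]$ satisfies $\mathbb{P}(N\ge m)\le p^{m}$.

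\emph{Combination.} The occupation time in $[-1,1]$ on $[T/2,T]$ is at most $\sum_{n=0}^{N}(V_{n+1}-U_n)$. Hence
\[
\P\Big(\textstyle\int 1_{|R|\le1}\ge T/4,\ \sup|R|\le\mu T\Big)\le \P(N\ge \eta T)+\P\Big(\sum_{n\le \eta T}(V_{n+1}-U_n)\ge T/4\Big).
\]
The first term is at most $p^{\eta T}$. By iterated conditioning and the exponential moment bound, the second is at most $(1+\delta(\theta))^{\eta T+1}e^{-\theta T/4}$. Choosing $\theta$ small and then $\eta$ small gives $e^{-cT}$ with $c$ depending only on $\lambda_0$ and not on $\beta$.

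\emph{Main obstacle.} The difficulty is to get the rate $(1\wedge\beta)/128$ uniformly in $\beta$. For small $\beta$ the only relevant constraint is $\mu T$, and the well bottom sits at depth of order $\beta T$, so we must check carefully that the negative-side comparison remains valid near $-\mu T$. Since $\mu\le\beta/64$, we stay at distance at least $\beta T/16$ from the well, where the $\cosh$ correction is $O(e^{-\beta T/16})$. The constant obtained is absolute, so up to enlarging $C$ it is at least $(1\wedge\beta)/128$; if not, one would trade $\eta$ and $\theta$ against $\mu$.
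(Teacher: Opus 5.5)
Your renewal-type route (short visits near $0$, geometrically many returns, exponential moments) is different from the paper's and can be made to work. As written, however, one step is false and the stated rate is not established.

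The false step is ``the occupation time in $[-1,1]$ on $[T/2,T]$ is at most $\sum_{n=0}^{N}(V_{n+1}-U_n)$''. With your stopping times, a visit $[U_n,V_{n+1})$ runs from $|R|=1/2$ to $|R|=1$. During an escape interval $[V_n,U_n)$, however, you only know $|R|>1/2$. The process may linger in $(1/2,1]$ for a long time without returning to $1/2$, and that time counts in $\int_{T/2}^T 1_{|R(t)|\le 1}\,dt$ but not in your sum. The paper only uses the reverse inclusion $I_0\subset\{|R|<1\}$, to deduce the bound on $|I_0|$ from this lemma. You need thresholds nested around the level $1$, so that the visits cover $\{|R|\le 1\}$. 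For example, take visits from $|R|=1$ to $|R|=2$ and escapes from $|R|=2$ back to $|R|=1$. The return probability is then at most about $e^{-\tanh(1)}$, and the visit lengths are exit times from $(-2,2)$. Alternatively, control separately the time spent in $(1/2,1]$ during each escape, by comparison with a Brownian motion with drift $\frac12\tanh(1/2)$ started at $1$.

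The second issue is the exponent. Your argument yields $e^{-cT}$ with an absolute $c$ built from $p,\theta,\eta$. You then conclude that ``up to enlarging $C$'' it is at least $(1\wedge\beta)/128$. Enlarging $C$ cannot improve an exponential rate. For $\beta\ge 1$ you genuinely need $c\ge 1/128$. That means explicitly bounding $\sup_x\E_x[e^{\theta\tau}]$ for the exit time $\tau$, and optimizing $c\approx \eta\ln(1/p)$ subject to $\theta/4>\eta\ln\sup_x\E_x[e^{\theta\tau}]$. This does appear to work: with thresholds $1,2$ and Brownian exit-time bounds one finds a rate of order $10^{-2}$. But it has to be carried out. ``Trading $\eta,\theta$ against $\mu$'' has no content, since $\mu$ does not enter your rate.

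The paper sidesteps all this bookkeeping with a single Girsanov change of measure to the process $\tilde R$ with the $\tanh$ drift reversed. It\^o's formula for $\ln\cosh\tilde R$ turns the density into $\frac{\cosh\tilde R(\sigma\wedge T)}{\cosh\tilde R(T/2)}\exp\big(-\frac12\int_{T/2}^{\sigma\wedge T}(1-\tanh^2\tilde R)\,dt\big)$, times a uniformly bounded $\lambda$-dependent factor. On the event, the boundary ratio is at most $e^{\mu T}$. The occupation constraint makes the exponential at most $e^{-(1-\tanh^2(1))T/8}$. So the rate $(1-\tanh^2(1))/8-\mu>1/128$ comes out explicitly. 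Once repaired, your approach is more robust, since it only uses outward drift away from $0$ and bounded drift near $0$, but it pays for this in constants.
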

	An immediate corollary of Lemma \ref{lem:timespentnear0} is that 
	\begin{align*}
		\P\Big(|I_0| \geq T/4, \; \sup_{t\in[T/2,T]} |R(t)| \leq \mu T\Big)\leq C\exp(-(1\wedge\beta)T/128)\,.
	\end{align*}

	\begin{proof}[Proof of Lemma \ref{lem:timespentnear0}]
		We use the Girsanov-Cameron theorem with
		\begin{align}\label{eq:girsanov}
			\mathrm{d}\tilde R(t)=\frac{\lambda}{2} f(t)\cosh \tilde R(t) \mathrm{d}t - \frac{1}{2}\tanh \tilde R(t)\mathrm{d}t+\mathrm{d}B(t)\,.
		\end{align}
		One can suppose that $|R(T/2)| \leq \mu T$ otherwise the probability is equal to $0$. Then, both $R$ and $\tilde{R}$ are bounded on $[T/2,\sigma\wedge T]$, which ensures that Novikov's condition is satisfied.
		We obtain for all functional $\varphi$ over the path in the time-interval $[T/2,\sigma\wedge T]$
		\begin{align*}
			\E[\varphi(R)] = \E\Big[\varphi(\tilde R) \exp\Big(	\int_{T/2}^{\sigma\wedge T} \tanh(\tilde R(t)) d\tilde R(t)
			- \frac{1}{2} \int_{T/2}^{\sigma\wedge T} \gl f(t) \cosh(\tilde R(t)) \tanh(\tilde R(t)) dt \Big)\Big]\,.
		\end{align*}
		It\^o formula gives:
		\begin{align*}
			\ln \cosh(\tilde R(\sigma \wedge T)) - 	\ln \cosh(\tilde R(T/2)) = \int_{T/2}^{\sigma\wedge T} \tanh \tilde R(t) d\tilde R(t) + \frac{1}{2} \int_{T/2}^{\sigma\wedge T} (1- \tanh^2 \tilde R(t)) dt \,.
		\end{align*}
		Therefore we get
		\begin{align*}
			\E[\varphi(R)] &= 
			\E\Big[\varphi(\tilde R) \exp\Big(	\ln \Big(\frac{\cosh(\tilde R(\sigma\wedge T))}{\cosh(\tilde R(T/2))}\Big) - \frac{1}{2} \int_{T/2}^{\sigma\wedge T} (1- \tanh^2 \tilde R(t)) dt \\
			&\qquad \qquad \qquad   \qquad - \frac{1}{2} \int_{T/2}^{\sigma\wedge T} \gl f(t) \cosh(\tilde R(t)) \tanh(\tilde R(t)) dt\Big)\Big] \,.
		\end{align*}
		Define
		\begin{align*}
			A := \Big\{\forall t \in [T/2,\sigma\wedge T], \; |R(t)| \leq \mu T,
			\; \int_{T/2}^{\sigma\wedge T} 1_{|R(t)| \leq 1} dt \geq T/4\Big\}\,.
		\end{align*}
		On $A$, we have $T\leq \sigma$ almost surely. The choice $\varphi = 1_{A}$ gives
		\begin{align*}
			\P(A) \leq \exp\Big(-\frac{1}{8} (1-\tanh(1)^2) T + \mu T + \gl \frac{\beta}{4} T \exp((\mu - \beta/8)T\Big)\,.
		\end{align*}
		The term which depends on $\lambda$ is uniformly bounded in $\beta,T$. Since $\mu=(1\wedge\beta)/64$ is strictly smaller than $\frac{1}{16}( (1-\tanh(1)^2) \wedge \beta)$, we obtain the desired exponential decay.
	\end{proof}

	We can therefore suppose that $|I_1| \geq T/4$. Then we notice that in the time set $I_1$, the process $|R|$ stays above $1/2$. Therefore, as long as it stays below $\mu T$, it is stochastically bounded from below by a Brownian motion with \emph{constant drift} equal to $h_3 := \tanh(1/2)/2$. The probability that it stays below $\mu T$ is therefore bounded from above by
	\begin{align*}
		\P\Big(\sup_{t\in[0,T/4]} (B(t) + h_3 t) \leq \mu T \Big)\,.
	\end{align*}
	One can use the rough bound
	\begin{align*}
		\P\Big(\sup_{t\in[0,T/4]} (B(t) + h_3 t) \leq \mu T \Big) &\leq \P\Big(B(1) \leq (2\mu - h_3/2) \sqrt{T} \Big)\,,
	\end{align*}
	which decreases at exponential rate since $\mu =(1\wedge\beta)/64< h_3/4$.
	
	\qed
	\vspace{0,3cm}

	\section{Approximating diffusions with a discrete scheme}\label{discrete section}

In this section, we would like to find a random variable measurable with respect to the discrete steps of the driving Brownian motion which is close to the value of the diffusion $\alpha^{(i)}$ at time $T$. To this end, we follow the classical Euler-Maruyama method, used to simulate stochastic diffusions.

We will state and prove a more general lemma. Let us fix a real bounded continuous function $f$ and a complex Lipschitz function $g$ and let $W$ be a standard complex Brownian motion (the choices $W=W^{(i)}$ will lead to the approximation of each $\alpha^{(i)}$). With a slight abuse of notation, let us define by $\alpha$ the solution of the following general SDE and starting at $\alpha(0) = 0$:
\begin{align*}
	\text{d}\alpha(t) = f(t)\text{d}t + \Re\Big[g\big(\alpha(t)\big)\;\mathrm{d}W(t)\Big]\,.
\end{align*}
Let us fix $n \geq 1$ and let $\delta := T/n$ be the length of the time-step. The \textit{piecewise constant approximation} $\alpha_{\text{p.c.}}$ is the solution of the following recursive equation: for all $j \geq 0$,
\begin{align*}\label{piecewisecontinuous}
	\alpha_{\text{p.c.}}\big((j+1)\delta\big)=\alpha_{\text{p.c.}}(j\delta)+\int_{j \delta}^{(j+1)\delta}f(t)\text{d}t +\Re\Big[g\big( \alpha_{\text{p.c.}}(j \delta)\big)\Big( W\big((j+1)\delta\big)-W\big(j \delta\big)\Big)\Big]\,,
\end{align*}
with initial condition $\alpha_{\text{p.c.}}(0)=0$ and for all $s \in \big[j \delta,(j+1) \delta\big)$, $\alpha_{\text{p.s.}}(s) = \alpha_{\text{p.c.}}(j \delta)$.

\begin{Prop}\label{discrete} 
	For all $T>0$, we have the following inequality
	$$\mathbb{E}\left[\big|\alpha_{\mathrm{p.c.}}(T)-\alpha(T)\big|^2 \right] \leq 8 c_g^2\Big(\|g\|_\infty^2+\delta\|f\|_\infty^2\Big)T \delta\, e^{4 c_g^2 T}\,,$$
	where $c_g$ denotes the Lipschitz constant of the function $g$.
\end{Prop}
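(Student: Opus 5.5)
We follow the classical strong-error analysis of the Euler--Maruyama scheme: write both processes in integral form, compare the integrands, and close the estimate with Gronwall's lemma. Since $\alpha_{\mathrm{p.c.}}$ is piecewise constant and the drift is integrated exactly, at the grid times we have
\begin{align*}
\alpha_{\mathrm{p.c.}}(t_j)=\int_0^{t_j} f(s)\,\mathrm{d}s+\Re\int_0^{t_j} g\big(\alpha_{\mathrm{p.c.}}(s)\big)\,\mathrm{d}W(s),
\end{align*}
the integrand being the adapted step process $g(\alpha_{\mathrm{p.c.}}(\lfloor s/\delta\rfloor\delta))$. The drift contributions therefore cancel exactly in $e(t):=\alpha_{\mathrm{p.c.}}(t)-\alpha(t)$. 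The only drift error comes from the continuous process $\alpha$ itself moving within a time step.

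\emph{Step 1: decomposition.} For $t=t_j$ we write
\begin{align*}
e(t)=\Re\int_0^t\big[g(\alpha_{\mathrm{p.c.}}(s))-g(\alpha(\underline{s}))\big]\mathrm{d}W(s)+\Re\int_0^t\big[g(\alpha(\underline{s}))-g(\alpha(s))\big]\mathrm{d}W(s),
\end{align*}
where $\underline{s}=\lfloor s/\delta\rfloor\delta$. In the first integrand, $\alpha_{\mathrm{p.c.}}(s)=\alpha_{\mathrm{p.c.}}(\underline s)$, so it is bounded by $c_g|e(\underline s)|$. Using $(a+b)^2\le 2a^2+2b^2$, Itô's isometry (the real part of a complex stochastic integral has second moment at most $\int|h|^2$) and the Lipschitz bound, we get
\begin{align*}
\E|e(t)|^2\le 2c_g^2\int_0^t \E|e(\underline s)|^2\,\mathrm{d}s+2c_g^2\int_0^t\E|\alpha(\underline s)-\alpha(s)|^2\,\mathrm{d}s .
\end{align*}

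\emph{Step 2: one-step increment of $\alpha$.} For $s\in[\underline s,\underline s+\delta)$,
\begin{align*}
\alpha(s)-\alpha(\underline s)=\int_{\underline s}^s f\,\mathrm{d}u+\Re\int_{\underline s}^s g(\alpha)\,\mathrm{d}W .
\end{align*}
This gives
\begin{align*}
\E|\alpha(s)-\alpha(\underline s)|^2\le 2\delta^2\|f\|_\infty^2+2\delta\|g\|_\infty^2=2\delta(\|g\|_\infty^2+\delta\|f\|_\infty^2).
\end{align*}
Here we need $g$ bounded; this holds in our application, where $g(x)=e^{-ix}-1$. The second term in Step 1 is therefore at most $4c_g^2(\|g\|_\infty^2+\delta\|f\|_\infty^2)\,T\delta$.

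\emph{Step 3: Gronwall.} Set $u(t)=\sup_{j:\,t_j\le t}\E|e(t_j)|^2$. Since $\underline s$ is a grid point, Step 1 gives
\begin{align*}
u(t)\le K+2c_g^2\int_0^t u(s)\,\mathrm{d}s,\qquad K:=4c_g^2(\|g\|_\infty^2+\delta\|f\|_\infty^2)T\delta .
\end{align*}
A discrete Gronwall argument then yields $u(T)\le K e^{2c_g^2T}$. Taking $T$ a multiple of $\delta$, as in the construction, this is within the claimed bound $8c_g^2(\cdots)T\delta\,e^{4c_g^2T}$; the factors $8$ and $4c_g^2$ leave room for cruder steps, such as treating off-grid times or an extra factor $2$ if one uses the cruder bound for the real part of the complex integral.

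The main point requiring care is to handle the error at grid times only, so that the piecewise-constant freezing produces no extra term. One must also make sure the Gronwall inequality is applied to a measurable, finite function. Finiteness holds because $\alpha_{\mathrm{p.c.}}$ has Gaussian-type moments and $|\alpha(t)|\le \int f+|\text{martingale}|$ has finite second moment.
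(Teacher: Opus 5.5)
Your proof is correct, but it splits the error differently from the paper.

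The paper introduces the continuous interpolation $\alpha_{\mathcal C}(s)=\int_0^s f+\Re\int_0^s g(\alpha_{\mathrm{p.c.}})\,\mathrm{d}W$ of the scheme, which coincides with $\alpha_{\mathrm{p.c.}}$ at the grid times. It then runs a continuous-time Gronwall argument on $\E|\alpha_{\mathcal C}(t)-\alpha(t)|^2$, after splitting $\alpha_{\mathrm{p.c.}}(s)-\alpha(s)=(\alpha_{\mathrm{p.c.}}(s)-\alpha_{\mathcal C}(s))+(\alpha_{\mathcal C}(s)-\alpha(s))$. Its local error $K_T$ is therefore the within-step fluctuation of the scheme itself, controlled explicitly by $\|f\|_\infty\delta+\|g\|_\infty|W(s)-W(\underline s)|$.

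You instead freeze the exact solution, splitting through $g(\alpha(\underline s))$. Your local error is then the one-step increment of $\alpha$, controlled through the SDE, and you only need a recursion on the grid values $e(t_j)$. This is essentially the mechanism the paper itself uses later in Lemma \ref{lem:gammaclose}.

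What each route buys:
\begin{itemize}
\item Your version needs no interpolant. Using the exact isometry $\E|\Re\int h\,\mathrm{d}W|^2=\E\int|h|^2$, where the paper uses a cruder factor $2$, it gives the sharper bound $4c_g^2(\|g\|_\infty^2+\delta\|f\|_\infty^2)T\delta\,e^{2c_g^2T}$.
\item The paper's formulation gives an inequality for $\alpha_{\mathcal C}-\alpha$ at every $t\le T$. This is what Remark \ref{Remarque} builds on via Doob's inequality.
\end{itemize}

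The two proofs estimate the same object. The martingale $\alpha_{\mathcal C}(t)-\alpha(t)$ is exactly your $\Re\int_0^t[g(\alpha_{\mathrm{p.c.}}(\underline s))-g(\alpha(s))]\,\mathrm{d}W$; the arguments differ only in how the integrand is split before applying the Lipschitz bound.
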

\begin{Rem}\label{Remarque}
	Using Doob's inequality, we can obtain an $L^2$ bound for the whole trajectory until time $T$, namely
	$$\mathbb{E}\left[\sup_{0\leq t \leq T}\big|\alpha_{\mathrm{p.c.}}(t)-\alpha(t)\big|^2\right]\leq 32 c_g^2\Big(\|g\|_\infty^2+\delta\|f\|_\infty^2\Big)T \delta e^{4 c_g^2 T}.$$
\end{Rem}

Lemma \ref{lem:discrete} is a direct consequence of the proposition, with the choices $f(t) = \frac{\beta}{4} \gl  e^{-(\beta/4) t}$, $g(x) = e^{-ix}-1$ and $W=W^{(i)}$.

\begin{proof}[Proof of Proposition \ref{discrete}] 
	Let us introduce the continuous process $\alpha_\mathcal{C}$ by
	\begin{equation}\label{lin}
		\alpha_{\mathcal{C}}(s)=\alpha_{\text{p.c.}}(s)+ \int_{j \delta}^s f(t)\mathrm{d}t + \Re (g(\alpha_{\text{p.c.}}(s))\left(W(s)-W(j \delta)\right)),\quad s\in[j\delta,(j+1)\delta)\,.
	\end{equation}
	Note that we have $\alpha_\mathcal{C}(T)=\alpha_{\text{p.c.}}(T)$.
	Moreover,
	\begin{equation}\label{eds}
		\alpha_{\mathcal{C}}(s)=\int_0^sf(t)\mathrm{d}t+\Re\Big[\int_0^sg(\alpha_{\text{p.c.}}(t))\mathrm{d}W(t)\Big]\,.
	\end{equation}
	
	\noindent Therefore,
	\begin{align*}
		\mathbb{E}\left[ \big| \alpha_{\mathcal{C}}(T)-\alpha(T)\big|^2\right] &
		\leq \E \bigg[\Big|\Re\int_0^Tg(\alpha_{\text{p.c.}}(s))-g(\alpha(s))\ \mathrm{d}W(s)\Big|^2\bigg] \\
		&\leq 2\E\bigg[\int_0^T  \Big|g(\alpha_{\text{p.c.}}(s))-g(\alpha(s))\Big|^2 ds\bigg]\\
		& \leq 2\,c_g^2\,\mathbb{E}\left[ \int_0^T \big| \alpha_{\text{p.c.}}(s)-\alpha(s)\big|^2\mathrm{d}s \right] \\
		&\leq 4\,c_g^2\,K_T+4\,c_g^2\,\int_0^T\mathbb{E}\left[\big|\alpha_{\mathcal{C}}(s)-\alpha(s)\big|^2\right]\mathrm{d}s,
	\end{align*}
	where 
	$$K_T:=\mathbb{E}\left[ \int_0^T \big| \alpha_{\mathcal{C}}(s)-\alpha_{\text{p.c.}}(s)\big|^2\mathrm{d}s \right].$$
	Using Gronwall's lemma, one gets
	\begin{equation}\label{lipschitz}\mathbb{E}\left[\big|\alpha_{\mathcal{C}}(T)-\alpha(T)\big|^2 \right]\leq 4\,c_g^2\,K_T\times e^{4c_g^2 T}.\end{equation}
	Let us bound $K_T$. For every $j\delta\leq s < (j+1)\delta$, we have
	\begin{align*}
		\big| \alpha_{\mathcal{C}}(s)-\alpha_{\text{p.c.}}(s)\big| &= \bigg| \int_{j \delta}^sf(t)\mathrm{d}t\ +\ \Re\big( g(\alpha_{\text{p.c.}}(s))( W(s)-W(j\delta))\big)\bigg|\\
		&\leq \|f\|_\infty \delta + \|g\|_\infty |W(s) - W(j \delta)|\,.
	\end{align*}
	Therefore, we have
	\begin{align*}
		K_T 
		&\leq 2\,\big(\| f \|_\infty^2  \delta+2\|g\|_\infty^2\big)T \delta\,.
	\end{align*}
	Together with \eqref{lipschitz}, this concludes the proof.
\end{proof}
	
	\medskip
	
	\section{Asymptotic independence between two driving Brownian motions}\label{fidi section}
	
	In this section, we focus on the case $k=2$ and the proof of Lemma \ref{lem:totalvariation} which quantifies how the dependence between the two driving Brownian motions $\Wn$ and $\Wd$ decreases as the distance $r$ tends to infinity. Recall that the diffusion $\alpha_r$ defined in \eqref{eqalphar} encodes their correlation via the expression
	$$\Wd(t)= \int_0^t e^{-i\alpha_r(s)}\mathrm{d}\Wn(s)\,.$$
	For large values of $r$ and times smaller than $T_r$, the process $\exp (-i\alpha_r)$ oscillates very fast. 
	As a result, the two Brownian motions become asymptotically independent. 
	Note that this behavior changes for times close to $T_r$, where the two Brownian motions start to show correlation.
	We therefore restrict the time interval to $[0,T]$ where $T \leq T_r$.

	Let us introduce a partition 
	\begin{align*}
		t_j=j \frac{T}{n}, \quad 0\leq j\leq  n \,, 
	\end{align*}
	of the time interval $[0,T]$ and denote the step size $\delta := T/n$.

	One can compare the finite dimensional vector of the time increments of the two driving Brownian motions  $\Wn$ and $\Wd$ at those discrete times with those of two independent Brownian motions $W^{(1)}\otimes W^{(2)}$. We set for $j = 1,\cdots,n$,
	\begin{align*}
		X^n_{2j -1} :=  \Wn(t_{j}) - \Wn(t_{j-1}),\quad X^n_{2j} :=  \Wd(t_{j}) - \Wd(t_{j-1})\,.
	\end{align*}
	The covariance matrix of the (circular) complex Gaussian vector $X^n$ denoted by $\Sigma_X$ is a $2 \times 2$-block matrix of the form
	\begin{align*}
		\Sigma_X =\begin{pmatrix} K_1 & & \\
			& \ddots & \\
			& & K_n
		\end{pmatrix},\qquad K_j \defeq \begin{pmatrix} 2 \delta & \kappa_j \\
			\overline{\kappa_j}& 2 \delta
		\end{pmatrix},
	\end{align*}
	where for all $j=1,\cdots,n$,
	\begin{align*}
		\kappa_j := \E\Big[\;\big(\Wn(t_{j}) - \Wn(t_{j-1})\big) \overline{\big(\Wd(t_{j}) - \Wd(t_{j-1})\big)}\;\Big]\,.
	\end{align*}

	We first bound from above the correlations $\kappa_j$ in the vector $X^n$ thanks to the strong oscillations of $\alpha_r$.
	
	\begin{Lemma}[Covariance of the increments of $\Wn$ and $\Wd$]\label{lem:covarianceBrowniens}
		There exists an absolute constant $C$ such that for all $\beta >0$ and $t \geq s \geq 0$, we have
		\begin{align*}
			&\bigg|\,\E\Big[\;\big(\Wn(t) - \Wn(s)\big) \overline{\big(\Wd(t) - \Wd(s)\big)}\;\Big]\bigg| \leq C\left(1+\frac{1}{\beta}\right) \,\frac{e^{\beta t/4}}{ r}\,.	
		\end{align*}
	\end{Lemma}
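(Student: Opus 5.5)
The plan is to compute the covariance through the Itô isometry and then exploit the fast rotation of $\alpha_r$. Since $\mathrm{d}\Wd = e^{-i\alpha_r}\,\mathrm{d}\Wn$ with $\Wn=W$ a standard complex Brownian motion, $\E[\mathrm{d}W\,\overline{\mathrm{d}W}]=2\,\mathrm{d}u$. This gives
\begin{align*}
\E\Big[\big(\Wn(t)-\Wn(s)\big)\overline{\big(\Wd(t)-\Wd(s)\big)}\Big] = 2\int_s^t \E\big[e^{i\alpha_r(u)}\big]\,\mathrm{d}u .
\end{align*}
So it suffices to bound $\big|\int_s^t \E[e^{i\alpha_r(u)}]\,\mathrm{d}u\big|$ by $C(1+1/\beta)e^{\beta t/4}/r$. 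The integrand has modulus one, but the phase $\alpha_r$ moves with speed roughly $r f(u) = r(\beta/4)e^{-\beta u/4}$, which is large for $u\le t$ as long as $e^{\beta t/4}\ll r$. When $e^{\beta t/4}\gtrsim r$ the bound is trivially at least $2(t-s)$ up to a constant, using $t\le C(1+1/\beta)e^{\beta t/4}$. More precisely, $\beta t/4 \le e^{\beta t/4}$ gives $t\le (4/\beta) e^{\beta t/4}$. So only the oscillatory regime matters.

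In that regime I would apply Itô's formula to $e^{i\alpha_r}$. With $\mathrm{d}\alpha_r = r f\,\mathrm{d}u + \mathrm{Re}\big((e^{-i\alpha_r}-1)\mathrm{d}W\big)$, the quadratic variation is $|e^{-i\alpha_r}-1|^2\mathrm{d}u = 4\sin^2(\alpha_r/2)\,\mathrm{d}u$. Taking expectations removes the martingale part (its integrand is bounded), and we obtain
\begin{align*}
\frac{\mathrm{d}}{\mathrm{d}u}\E\big[e^{i\alpha_r(u)}\big] = i r f(u)\,\E\big[e^{i\alpha_r(u)}\big] - 2\,\E\big[\sin^2(\alpha_r(u)/2)\,e^{i\alpha_r(u)}\big].
\end{align*}
Solving for $\E[e^{i\alpha_r}]$, dividing by $i r f(u)$ and integrating over $[s,t]$, then integrating by parts in the first term:
\begin{align*}
\int_s^t \E[e^{i\alpha_r(u)}]\,\mathrm{d}u = \Big[\frac{\E[e^{i\alpha_r(u)}]}{i r f(u)}\Big]_s^t + \int_s^t \E[e^{i\alpha_r(u)}]\,\frac{f'(u)}{i r f(u)^2}\,\mathrm{d}u + \int_s^t \frac{2\,\E[\sin^2(\alpha_r/2)e^{i\alpha_r}]}{i r f(u)}\,\mathrm{d}u .
\end{align*}

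Each term is then bounded using $1/f(u) = (4/\beta)e^{\beta u/4}$ and $|f'/f^2| = e^{\beta u/4}$. The boundary term is at most $(8/\beta)e^{\beta t/4}/r$. The second term is at most $\int_s^t e^{\beta u/4}\,\mathrm{d}u/r \le (4/\beta)e^{\beta t/4}/r$. The third term is at most $(8/\beta)\int_s^t e^{\beta u/4}\,\mathrm{d}u/r \le (32/\beta^2)e^{\beta t/4}/r$.

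That last $1/\beta^2$ factor is the one obstacle: it is worse than the claimed $1+1/\beta$. To fix it I would not discard the oscillation in the third term. Instead I would repeat the same averaging argument on $\E[\sin^2(\alpha_r/2)e^{i\alpha_r}]$, which is a trigonometric polynomial in $e^{i\alpha_r}$ with frequencies $1$ and $2$. This gains another factor $1/(rf)$ at the price of terms of the type $e^{\beta u/2}/r^2$. In the regime $e^{\beta t/4}\le r$, those are bounded by $(1/\beta)e^{\beta t/4}/r$.

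If that iteration is messy, a simpler fallback is to split according to whether $t\le 1$ or not. A crude estimate is $\int_s^t e^{\beta u/4}\,\mathrm{d}u \le (t-s)e^{\beta t/4}$, which is fine when $t$ is bounded. For large $t$, I would absorb the factor using $1/\beta \le$ a constant times $e^{\beta t/8}$-type slack, combined with the trivial bound in the regime $e^{\beta t/4}\ge r$. Only the polynomial rate $e^{\beta t/4}/r$ with some $\beta$-dependent constant is actually used downstream, and I would settle for that if the sharp constant resists.
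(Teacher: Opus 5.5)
Your core computation is the paper's proof. You use the Itô isometry to reduce to $2\int_s^t\E[e^{i\alpha_r(u)}]\,\mathrm{d}u$, then Itô's formula for $e^{i\alpha_r}$ and division by $irf$. This produces the same boundary term, $f'/f^2$ term and $\sin^2$ term; the paper works pathwise and drops the martingale at the end, while you take expectations first. Your diagnosis is also correct: bounding these terms gives $C(\beta^{-1}+\beta^{-2})e^{\beta t/4}/r$, and the paper's sentence that the three terms ``give the desired upper bound'' passes over the $\beta^{-2}$.

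The gap is in your repair. It is not true that $\sin^2(\alpha_r/2)e^{i\alpha_r}$ has only frequencies $1$ and $2$:
\[
\sin^2(\alpha_r/2)\,e^{i\alpha_r}=\tfrac12 e^{i\alpha_r}-\tfrac14 e^{2i\alpha_r}-\tfrac14 .
\]
The constant $-\tfrac14$ does not oscillate, so it contributes exactly
\[
\int_s^t\frac{2\cdot(-\frac14)}{irf(u)}\,\mathrm{d}u=\frac{i}{2r}\int_s^t\frac{\mathrm{d}u}{f(u)}=\frac{8i}{\beta^2r}\big(e^{\beta t/4}-e^{\beta s/4}\big)
\]
to your third term, with nothing left to average. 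This is sharp, not an artefact. Run your own iteration on the two oscillating pieces, i.e. integrate $\int_s^t \E[e^{ik\alpha_r}]f^{-1}\,\mathrm{d}u$, $k=1,2$, by parts using
\[
\frac{\mathrm{d}}{\mathrm{d}u}\E\big[e^{ik\alpha_r}\big]=(ikrf-k^2)\,\E\big[e^{ik\alpha_r}\big]+\frac{k^2}{2}\Big(\E\big[e^{i(k+1)\alpha_r}\big]+\E\big[e^{i(k-1)\alpha_r}\big]\Big).
\]
They contribute only $O\big((1+\beta^{-1})e^{\beta t/2}/(\beta^2r^2)\big)$. Hence for $\beta\le1$, $s=0$, $t=4/\beta$,
\[
\E\big[\Wn(t)\,\overline{\Wd(t)}\big]=\frac{16i(e-1)}{\beta^2 r}\Big(1+O(\beta)+O\big(\tfrac{1}{\beta r}\big)\Big).
\]
Its modulus exceeds $C(1+\beta^{-1})e/r$ once $\beta\to0$ with $\beta r\to\infty$, for any absolute $C$. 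So the inequality as stated is false for small $\beta$, and neither your iteration nor your $t\le1$ split can reach it. What is true, and what both you and the paper actually prove, is the bound with $C(\beta^{-1}+\beta^{-2})$; this has the form $C(1+\beta^{-1})$ only for $\beta$ bounded below. Your fallback of a $\beta$-dependent constant is the right resolution, since Lemmas~\ref{lem:totalvariation} and~\ref{lem:kpoints proof} only use a constant $C(\beta)$.

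Separately, your regime split is both wrong and unnecessary. If $e^{\beta t/4}=r$, the target bound is $C(1+\beta^{-1})$, while the trivial bound $2(t-s)$ can equal $\frac{8}{\beta}\ln r$. So $t\le\frac4\beta e^{\beta t/4}$ does not make that regime trivial: you lost the factor $1/r$. Fortunately the integration-by-parts identity is exact for every $t$, and your three term bounds never use $e^{\beta t/4}\le r$. You should simply drop the split, which also removes the step ``in the regime $e^{\beta t/4}\le r$'' from your iteration.
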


	\begin{proof} 
		Let us fix $s \leq t$. The correlation between the two complex increments is given by
		\begin{align*}
			\E\big[\;\big(\Wn(t)-\Wn(s)\big) \overline{\big(\Wd(t) - \Wd(s)\big)}\;\big]  &= \E\big[\big\langle \Wn(t)-\Wn(s), \overline{\Wd(t) - \Wd(s)}\big\rangle\big] \\
			&= 2\,\E\Big[ \int_s^t e^{i \alpha_r(u)} \mathrm{d}u \Big] \,.	
		\end{align*}
		Let us denote the martingale part of $\alpha_r$ by $\tilde \alpha_r(t)=\int_0^t 2\sin(\alpha_r(s)/2) \mathrm{d}B(s)$. Recall that
		\begin{align*}
			\alpha_r(t) = rF(t) + \tilde \alpha_r(t)\,,
		\end{align*}
		where $F(t) = 1-e^{- (\beta/4) t}$.
		
	\noindent	By It\^o formula, we have:
		\begin{align*}
			\int_s^t e^{i \alpha_r(u)} \mathrm{d}u &= \Big[ e^{i \tilde \alpha_r(u)} \frac{e^{i rF(u)}}{ i r f(u)}\Big]_s^t - \int_s^t \frac{e^{ i r F(u)}}{ i r f(u)} \mathrm{d}(e^{ i \tilde \alpha_r(u)}) +  \int_0^t e^{ i \tilde \alpha_r(u) +  i r F(u)} \frac{f'(u)}{ i r (f(u))^2} \mathrm{d}u\,,\\
			&=\Big(\frac{e^{i \alpha_r(t)}}{ i r f(t)}- \frac{e^{i \alpha_r(s)}}{ i r f(s)}\Big)  + \int_s^t \frac{2 e^{i \alpha_r(u)} }{ i r f(u)} \sin(\alpha_r(u)/2)^2 \mathrm{d}u +  \int_s^t e^{ i \alpha_r(u)} \frac{f'(u)}{i r (f(u))^2} \mathrm{d}u\\
			&\quad  - \int_s^t \frac{2 e^{i \alpha_r(u)}}{r f(u)}  \sin(\alpha_r(u)/2) \mathrm{d}B(u)\,.
		\end{align*}
		The first three terms gives the desired upper bound. When taking the expectation, the last term equals zero.
		
	\end{proof}
	
	Consider a complex Gaussian vector $Y^n$ of covariance matrix $\Sigma_{Y} = 2\delta I_{2n}$.
	One can bound the total variation distance between $X^n$ and $Y^n$ using Hellinger distance which has a simple expression for two complex Gaussian vectors. 
	More precisely, the squared Hellinger distance is equal to 
	\begin{align*}
		H^2(X^n,Y^n) =  \frac{1}{2} \int \Big(\sqrt{f_{X^n}(z)} - \sqrt{f_{Y^n}(z)}\big)^2 dz
	\end{align*}
	where $f_{X^n}$ (resp. $f_{Y^n}$) denotes the density of $X^n$ (resp. $Y^n$) with respect to the complex Lebesgue measure on $\mathcal{C}^n$.	Using Cauchy-Schwarz inequality, we obtain
	\begin{equation}\label{boundTVHellinger}
		d_{TV}(X^n,Y^n)  \leq \sqrt{2} H(X^n,Y^n)\,.
	\end{equation}
	For a complex Gaussian vector of covariance matrix $\Sigma$, the density writes
	\begin{align*}
		\frac{1}{(2\pi)^{2n} \det \Sigma} \exp\big(- \frac{1}{2}z^\ast \Sigma^{-1} z\big)\,.
	\end{align*}
	Therefore, a computation gives
	\begin{align}\label{expressionHellinger}
		H(X^n,Y^n) = \sqrt{1- \frac{\det \Sigma_X^{1/2} \det \Sigma_Y^{1/2}}{\det ((\Sigma_X + \Sigma_Y)/2)}}
	\end{align}
	Computing the determinants,	we obtain
	\begin{align*}
		H(X^n,Y^n) = \sqrt{1- \frac{\sqrt{\prod_{i=1}^{n} (1- |\kappa_i|^2/(4\delta^2))}}{\prod_{i=1}^{n} (1- |\kappa_i|^2/(16\delta^2)}}
	\end{align*}
	If $\sup |\kappa_i|/\delta$ is small enough, we obtain the following rough bound for some absolute constant $C$
	\begin{align*}
		H(X^n,Y^n) \leq C \frac{\sqrt{\sum_{i=1}^{n}|\kappa_i|^2}}{\delta}\,.
	\end{align*}
	Using Lemma \ref{lem:covarianceBrowniens}, we obtain the result. \qed

	\vspace{0,3cm}

		\section{Generalization to any number of intervals}\label{section:kpoints}

		Let us prove Lemma \ref{lem:kpoints}. Recall the discretization $(t_j)_{j=0}^n=(j\delta)_{j=0}^n$, with $\delta=T/n$, and the two clusters $J_1=\{1,\cdots,k_0\}$ and $J_2=\{k_0+1,\cdots,k\}$. As we will run similar procedures on each cluster, we use the index $q=1,2$ to recall which cluster $J_q$ is involved. For all $1\leq j\leq n$, $\ell \in \N$ and any function $\boldsymbol{X}:[0,T]\to\mathbb{C}^{\ell}$, we define $\Delta_j\boldsymbol{X}=\boldsymbol{X}(t_{j})-\boldsymbol{X}(t_{j-1})$ and $\Delta\boldsymbol{X} = (\Delta_j\boldsymbol{X}, \;j \in \{1,\cdots,n\})$. We also denote by $X^{(i)}$ the $i$-th coordinate of $\boldsymbol{X}$, and if $\ell = k$, we let $\boldsymbol{X}_q=(X^{(i)})_{i\in J_q}$. 
		
		We organize the proof as follow. At first, we give sufficient conditions on a family of increments $(\Delta \boldsymbol{Z}_1, \Delta \boldsymbol{Z}_2)$ to ensure that the discrete approximations $\gamma_{\text{p.c.}}^{(1)},\cdots,\gamma_{\text{p.c.}}^{(k)}$ constructed from them satisfy
		$$\sup_{1\leq i\leq k}\mathbb{P}\Big(\big|\alpha^{(i)}(T) - \gamma_{\text{p.c.}}^{(i)}(T)\big|>\pi/2\Big)\leq C\frac{e^{4T}}{ n}.$$
		The conditions are essentially that the increments must be independent and close to those of $(\Delta \boldsymbol{W}_1, \Delta\boldsymbol{W}_2)$. Then, we construct such a family which additionally satisfies the total variation bound
		$$\mathrm{d}_{TV}\bigg(\Big(\Delta_j\boldsymbol{Z}_1,\Delta_j\boldsymbol{Z}_2\big)\Big)_{j=1}^n,\Big(\Delta_j\boldsymbol{Z}_1\otimes\Delta_j\boldsymbol{Z}_2\Big)_{j=1}^n\bigg)\leq C\,k^{3}\, \, \frac{n^{5/2} \; e^{\frac{\beta}{4}T}}{r}.$$
		As we shall explain in more details below, the time discretization of $(\boldsymbol{W}_1,\boldsymbol{W}_2)$ does not necessarily satisfy this inequality: the constant $C$ depends on the eigenvalues of the covariance matrices. When $k$ grows, the small eigenvalues can be arbitrarily small with respect to $k$ or even equal to $0$ which makes the constant $C$ arbitrarily large (the case when one of them is equal to $0$ even gives an infinite constant). Fortunately, tiny eigenvalues do not affect much the dynamics of the discrete approximations. We will therefore construct increments $(\Delta \boldsymbol{Z}_1, \Delta \boldsymbol{Z}_2)$ with better correlation structure by projecting $(\Delta\boldsymbol{W}_1,\Delta\boldsymbol{W}_2)$ on the eigenspaces associated to large enough eigenvalues.
		
		\subsection*{Constructing discrete approximations}
		In the previous section, we constructed the discrete approximations $\alpha^{(i)}_{\mathrm{p.c.}}$, which are measurable with respect to the Brownian increments $\Delta \boldsymbol{W}=(\Delta_j W^{(i)})_{1\leq i\leq k, 1\leq j \leq n}$. These are defined by the recursion
		\begin{align*}
			\alpha^{(i)}_{\mathrm{p.c.}}(0)=0,\quad \Delta_{j}\alpha^{(i)}_{\mathrm{p.c.}}=\int_{t_{j-1}}^{t_j}f(t)\mathrm{d}t+\Re\Big(g\big(\alpha^{(i)}_{\mathrm{p.c.}}(t_{j-1})\big)\Delta_j W^{(i)}\Big),
		\end{align*}
		where $g(x)=\exp(-i\,x)-1$. The function $g$ is Lipschitz continuous with constant $c_g\leq 1$ and bounded by $\|g\|_\infty \leq 2$.
		
		To improve the correlation structure, we introduce a modified sequence of approximations driven by a sequence of random vectors $\Delta \boldsymbol{Z} = (\Delta_j Z^{(i)})_{1\leq i\leq k, 1\leq j \leq n}$ instead of $\Delta \boldsymbol{W}$. We denote these new processes by $\gamma_{\mathrm{p.c.}}^{(i)}$. They are defined recursively by
		\begin{align}\label{eq:gamma approx}
			\gamma_{\mathrm{p.c.}}^{(i)}(0)=0, \quad \Delta_{j}\gamma_{\mathrm{p.c.}}^{(i)}=\int_{t_{j-1}}^{t_j}f(t)\mathrm{d}t+\Re\Big(g\big(\gamma_{\mathrm{p.c.}}^{(i)}(t_{j-1})\big) \Delta_j Z^{(i)}\Big).
		\end{align}
		
		We assume that $(\Delta_j \boldsymbol{W},\Delta_j \boldsymbol{Z})_{1\leq j\leq n}$ is a family of independent random variables, and that for all $i\in\{1,\cdots,k\},\ j\in\{1,\cdots,n\}$, the variables $\Delta_j Z^{(i)}$ are centered and close to $\Delta_j W^{(i)}$:
		\begin{equation}\label{eq:L2cond}
			\mathbb{E}\Big[\Delta_j Z^{(i)}\Big]=0, \qquad \mathbb{E}\left[\big|\Delta_j W^{(i)}-\Delta_j Z^{(i)}\big|^2\right]\leq \frac{1}{n^2}.
		\end{equation}
		
		The following lemma ensures that the new approximations $\gamma_{\mathrm{p.c.}}^{(i)}$ remain close to the reference approximations $\alpha^{(i)}_{\mathrm{p.c.}}$, and consequently to the continuous limit.
		
		\begin{Lemma}\label{lem:gammaclose}
			Under the assumptions above, there exists a constant $C$ such that for any $1\leq i\leq k$,
			\begin{equation}
				\mathbb{P}\Big( \big|\alpha^{(i)}_{\mathrm{p.c.}}(T) - \gamma_{\mathrm{p.c.}}^{(i)}(T)\big|>\pi/6\Big)\leq C\frac{e^{4T}}{n}.
			\end{equation}
		\end{Lemma}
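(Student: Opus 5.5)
We will compare the two recursions step by step in $L^2$ and close with a discrete Gronwall argument, then conclude with Markov's inequality. Fix $i$ and set $e_j := \alpha^{(i)}_{\mathrm{p.c.}}(t_j)-\gamma^{(i)}_{\mathrm{p.c.}}(t_j)$, with $e_0=0$. Subtracting the two recursions, the drift terms cancel, and we get
\begin{align*}
e_{j} = e_{j-1} + \Re\Big[\big(g(\alpha^{(i)}_{\mathrm{p.c.}}(t_{j-1}))-g(\gamma^{(i)}_{\mathrm{p.c.}}(t_{j-1}))\big)\Delta_j W^{(i)}\Big] + \Re\Big[g(\gamma^{(i)}_{\mathrm{p.c.}}(t_{j-1}))\big(\Delta_j W^{(i)}-\Delta_j Z^{(i)}\big)\Big].
\end{align*}
The two random prefactors are measurable with respect to $\mathcal{F}_{j-1}:=\sigma(\Delta_\ell \boldsymbol{W},\Delta_\ell\boldsymbol{Z},\ \ell\leq j-1)$. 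By the assumed independence across $j$, the pair $(\Delta_j W^{(i)},\Delta_j Z^{(i)})$ is independent of $\mathcal{F}_{j-1}$. Both increments are centered: this is automatic for $W$ and assumed in \eqref{eq:L2cond} for $Z$. Hence both added terms are martingale increments with respect to $(\mathcal{F}_j)$.

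We expand $\E[e_j^2]$. The cross terms between $e_{j-1}$ and each increment vanish by conditional centering. For the two increments themselves we use $(a+b)^2\leq (1+\eta)a^2+(1+\eta^{-1})b^2$, or more simply $2a^2+2b^2$. The first increment contributes at most $c_g^2\,\E[e_{j-1}^2]\,\E|\Delta_jW^{(i)}|^2 \le 2\delta\,\E[e_{j-1}^2]$, using $c_g\le 1$ and $\E|\Delta_j W^{(i)}|^2=2\delta$. The second contributes at most $\|g\|_\infty^2\,\E|\Delta_jW^{(i)}-\Delta_jZ^{(i)}|^2\le 4/n^2$ by \eqref{eq:L2cond}. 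This gives the recursion
\begin{align*}
\E[e_j^2]\le (1+4\delta)\,\E[e_{j-1}^2] + \frac{8}{n^2}.
\end{align*}
Iterating, we obtain $\E[e_n^2]\le \frac{8}{n^2}\sum_{j<n}(1+4\delta)^j\le \frac{8}{n}\,e^{4\delta n} = \frac{8}{n}\,e^{4T}$.

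Finally, Markov's inequality gives $\P(|e_n|>\pi/6)\le (36/\pi^2)\,\E[e_n^2]\le C e^{4T}/n$, which is the claim with an absolute constant $C$.

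The only delicate point is the vanishing of the cross terms. For this we need $\Delta_j Z^{(i)}$ to be centered conditionally on the past, and not merely in mean. This is exactly what the independence of the family $(\Delta_j\boldsymbol{W},\Delta_j\boldsymbol{Z})_j$ across $j$ provides, together with the measurability of $\gamma^{(i)}_{\mathrm{p.c.}}(t_{j-1})$ with respect to earlier increments. We will check this explicitly. Note also that we only need independence across time steps, not between $W$ and $Z$ at the same step, since the bound on the second increment uses only the $L^2$ closeness.
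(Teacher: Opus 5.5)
Your proof is correct and matches the paper's argument essentially step by step. It derives the same recursion $\E[e_j^2]\le(1+4\delta)\,\E[e_{j-1}^2]+8/n^2$ by killing the cross term via independence and centering of the step-$j$ increments, then iterates and applies Chebyshev's inequality. Your cruder summation of the geometric series gives $8e^{4T}/n$, which, unlike the paper's closed form $2e^{4T}/(nT)$, holds with an absolute constant uniformly in $T>0$ (the paper's version needs $T$ bounded below, which is harmless in its application).
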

		
		\begin{proof}
			Fix $1\leq i\leq k$ and denote the error between the two discrete schemes by $e_j \defeq \alpha^{(i)}_{\mathrm{p.c.}}(t_j) - \gamma_{\mathrm{p.c.}}^{(i)}(t_j)$. Subtracting the recurrences, the drift term cancels and we obtain
			\begin{align*}
				\big|e_j - e_{j-1}\big| &= \Big|\Re\Big[g\big(\alpha^{(i)}_{\mathrm{p.c.}}(t_{j-1}) \big) \Delta_j W^{(i)}\Big] - \Re\Big[g\big(\gamma_{\mathrm{p.c.}}^{(i)}(t_{j-1}) \big) \Delta_j Z^{(i)}\Big]\Big| \\
				&\leq \Big|\,g\big(\alpha^{(i)}_{\mathrm{p.c.}}(t_{j-1}) \big) \Delta_j W^{(i)} - g\big(\gamma_{\mathrm{p.c.}}^{(i)}(t_{j-1}) \big) \Delta_j Z^{(i)} \Big|.
			\end{align*}
			We decompose the term inside the modulus as
			$$ \Big(g\big(\alpha^{(i)}_{\mathrm{p.c.}}(t_{j-1})\big) - g\big(\gamma_{\mathrm{p.c.}}^{(i)}(t_{j-1})\big)\Big)\Delta_j W^{(i)} + g\big(\gamma_{\mathrm{p.c.}}^{(i)}(t_{j-1})\big) \Big(\Delta_j W^{(i)} - \Delta_j Z^{(i)}\Big). $$
			Squaring the inequality, combined with the Lipschitz property of $g$ and the bound $\|g\|_\infty \le 2$, we get
			\begin{align}\label{boundej}
				\big|e_j - e_{j-1}\big|^2 \leq 2 c_g^2 \big|e_{j-1}\big|^2 \big|\Delta_j W^{(i)}\big|^2 + 8 \big|\Delta_j W^{(i)} - \Delta_j Z^{(i)}\big|^2\, .
			\end{align}
			Let $U_j \defeq \mathbb{E}\big[|e_j|^2\big]$. We have
			\begin{align*}
				U_j  &= U_{j-1} + \E\big[	|e_j - e_{j-1}|^2 \big] + 2\, \E\big[ e_{j-1}	(e_j - e_{j-1})\big]\,.
			\end{align*}
			The cross term $2\,\E\big[ e_{j-1}	(e_j - e_{j-1})\big]$ vanishes
			\begin{gather*}
				\E\bigg[ e_{j-1}	\Big(\Re\Big[g\big(\alpha^{(i)}_{\mathrm{p.c.}}(t_{j-1}) \big) \Delta_j W^{(i)}\Big] - \Re\Big[g\big(\gamma_{\mathrm{p.c.}}^{(i)}(t_{j-1}) \big) \Delta_j Z^{(i)}\Big]\Big)\bigg]\\
				= \	\Re\Big(\E\Big[ e_{j-1}\, g\big(\alpha^{(i)}_{\mathrm{p.c.}}(t_{j-1})\big) \Big] \E\big[ \Delta_j W^{(i)}\big] - \E\Big[ e_{j-1}\, g\big(\gamma_{\mathrm{p.c.}}^{(i)}(t_{j-1})\big) \Big] \E\big[\Delta_j Z^{(i)}\big]\Big)=0\,,
			\end{gather*}
			since $\Delta_j W^{(i)}$ and $\Delta_j Z^{(i)}$ are independent of  the random variables $(\Delta_\ell W^{(i)}, \Delta_\ell Z^{(i)})_{\ell \leq  j-1}$ and all increments are centered.
			
			Taking the expectation of the bound \eqref{boundej}, we obtain
			\begin{align*}
				U_j &\leq U_{j-1} + 2 c_g^2\, \mathbb{E}\Big[\big|e_{j-1}\big|^2 \big|\Delta_j W^{(i)}\big|^2\Big] + 8\, \mathbb{E}\Big[\big|\Delta_j W^{(i)} - \Delta_j Z^{(i)}\big|^2\Big] \\
				&\leq U_{j-1} \left(1 + 2\, \mathbb{E}\Big[\big|\Delta_j W^{(i)}\big|^2\Big]\right) + \frac{8}{n^2}\,,
			\end{align*}
			where we used the independence between $e_{j-1}$ and $\Delta_j W^{(i)}$ and the fact that $c_g \le 1$. Since $\mathbb{E}\big[|\Delta_j W^{(i)}|^2\big] = 2T/n$, we have
			$$ U_j \leq U_{j-1} \left(1 + \frac{4T}{n}\right) + \frac{8}{n^2}\,. $$
			Together with $U_0=0$, we obtain recursively
			\begin{align*}
				U_n \leq \frac{8}{n^2} \sum_{k=0}^{n-1} \Big(1+\frac{4T}{n}\Big)^k = \frac{8}{n^2} \frac{(1+4T/n)^n - 1}{4T/n}.
			\end{align*}
			Using the bound $(1+x/n)^n \leq e^x$, this gives
			\begin{align*}
				U_n \leq \frac{2\, e^{4T}}{nT}.
			\end{align*}
			We conclude using Tchebychev's inequality.
		\end{proof}
		
		\subsection*{Existence of appropriate good candidates}
		We now construct the increments $\Delta_j \boldsymbol{Z}_1$ and $\Delta_j \boldsymbol{Z}_2$ to improve the correlation structure while remaining close to those of our driving Brownian motions. Our strategy is to perform a \emph{spectral regularization}: we preserve the principal components of the Brownian motion but replace the small, unstable modes with independent noise of controlled variance. For simplicity of notation, we do not emphasize the time-dependence anymore in the following lines. Recall that it is present in the variable $j \in\{1,\cdots,n\}$.
		
		\medskip
		
		We aim to prove the following lemma.
		\begin{Lemma}\label{Lem:good candidates}
			There exist sequences of increments $(\Delta_j \boldsymbol{Z}_1)_{j=1}^n$ and $(\Delta_j \boldsymbol{Z}_2)_{j=1}^n$ satisfying the assumptions of Lemma \ref{lem:gammaclose} such that
			\begin{equation}\label{distTVZ}
				\mathrm{d}_{TV}\bigg(\Big(\Delta_j\boldsymbol{Z}_1,\Delta_j\boldsymbol{Z}_2\Big)_{j=1}^n,\Big(\Delta_j\boldsymbol{Z}_1\otimes\Delta_j\boldsymbol{Z}_2\Big)_{j=1}^n\bigg)\leq C\,k^{3}\, \frac{n^{5/2} \; e^{\frac{\beta}{4}T}}{r}.
			\end{equation}
		\end{Lemma}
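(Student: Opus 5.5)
Fix a time step $j$. The vector $\Delta_j \boldsymbol{W}=(\Delta_j\boldsymbol{W}_1,\Delta_j\boldsymbol{W}_2)\in\mathbb{C}^k$ is, conditionally on nothing (the increments over disjoint steps are independent because the integrands $e^{-i\alpha_\cdot}$ have modulus one and the driving noise is a single complex Brownian motion), a centered vector whose covariance we can compute. Within cluster $q$, the covariance block $\Sigma_{q,j}=\E[\Delta_j\boldsymbol{W}_q\Delta_j\boldsymbol{W}_q^\ast]$ has diagonal $2\delta$. The cross block $\kappa_j^{(i,i')}$ for $i\in J_1$, $i'\in J_2$ equals $2\E\int_{t_{j-1}}^{t_j} e^{i(\alpha_{x_{i'}+r}-\alpha_{x_i})}\mathrm{d}u$. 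Since $\alpha_{x_{i'}+r}-\alpha_{x_i}$ is in law a sine diffusion with parameter $r+x_{i'}-x_i\geq r$, the integration by parts argument of Lemma \ref{lem:covarianceBrowniens} gives $|\kappa_j^{(i,i')}|\leq C e^{\beta t_j/4}/r$. Strictly speaking $\Delta_j\boldsymbol{W}$ is not Gaussian. I would handle this as in Section \ref{fidi section}, either by working with the Gaussian vector having the same second moments or by noting that the construction below only uses second-moment information together with a Gaussian replacement. The cleanest way is to \emph{define} $\Delta_j\boldsymbol{Z}$ as a Gaussian vector coupled to $\Delta_j\boldsymbol{W}$.

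The spectral regularization goes as follows. Diagonalize $\Sigma_{q,j}=\sum_\ell \mu_\ell v_\ell v_\ell^\ast$ and fix a threshold $\eta=1/(k n^2)$. Let $P_q$ be the projection onto eigenvectors with $\mu_\ell\geq\eta$. Set
\[\Delta_j\boldsymbol{Z}_q:=P_q\Delta_j\boldsymbol{W}_q+\sqrt{\eta}\,(I-P_q)G_{q,j},\]
with $G_{q,j}$ standard complex Gaussians independent of everything, and independent across $j$ and $q$. Then $\E|\Delta_jW^{(i)}-\Delta_jZ^{(i)}|^2\leq \|(I-P_q)\Delta_j\boldsymbol{W}_q\|^2$ in expectation plus $\eta$, which is at most $2k\eta\leq 1/n^2$ after adjusting constants. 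This gives \eqref{eq:L2cond}, and independence across $j$ holds. The covariance of $\Delta_j\boldsymbol{Z}_q$ now has all eigenvalues at least $\eta$. The cross covariance is $P_1\kappa_jP_2$, whose entries are still $O(e^{\beta T/4}/r)$, so its Frobenius norm is $O(k e^{\beta T/4}/r)$.

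Next I compare the joint law with the product law. For the Gaussian replacement, Hellinger as in \eqref{expressionHellinger} with $\Sigma_X=\begin{pmatrix}A&B\\B^\ast&D\end{pmatrix}$ and $\Sigma_Y=\mathrm{diag}(A,D)$ gives $H^2\lesssim\|A^{-1/2}BD^{-1/2}\|_F^2\leq \eta^{-2}\|B\|_F^2$ when the latter is small. Summing over $j$ yields
\[d_{TV}\lesssim \eta^{-1}\Big(\sum_j\|B_j\|_F^2\Big)^{1/2}\lesssim k n^2\cdot\sqrt n\, k\,e^{\beta T/4}/r=k^2n^{5/2}e^{\beta T/4}/r,\]
which is within the claimed $k^3$. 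The extra $k$ provides slack for the non-Gaussian correction.

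The main obstacle is that $\Delta_j\boldsymbol{W}$ is not Gaussian jointly across clusters, because the cross-correlation is random through $\alpha_r$. Marginally each cluster is Gaussian: $\boldsymbol{W}_q$ is a vector of Brownian motions rotated by phases, but those phases are random too, so even this requires care. I would first try to restrict to an approximation in which the phases $\alpha_{x_i}$ within a cluster are frozen at the step start, so that each cluster's increment is conditionally Gaussian. I would then absorb the resulting error, of order $\delta^{3/2}$ per step, into the $1/n^2$ budget of \eqref{eq:L2cond}, possibly by choosing $n$ larger. If that fails, I would instead bound the total variation through a Girsanov/Pinsker relative-entropy bound on the conditionally Gaussian increments.
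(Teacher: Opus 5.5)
Your construction and estimate coincide with the paper's. The paper also diagonalizes the covariance $M_q^j$ of $\Delta_j\boldsymbol{W}_q$, keeps the eigen-coordinates above the threshold $\varepsilon=1/(2kn^2)$, and replaces the others by independent $\mathcal{N}_{\mathbb{C}}(0,\varepsilon)$ variables. It checks \eqref{eq:L2cond} via $\sum_{\ell>p}(\lambda_\ell+\varepsilon)\le 2k\varepsilon$. It then bounds each Hellinger factor through the Schur complement, $\det K_j/\det L_j=\det(I-Q_j^HQ_j)$ with $Q_j=\tilde{M}_1^{-1/2}\boldsymbol{\kappa}_j\tilde{M}_2^{-1/2}$ and $\mathrm{Tr}(Q_j^HQ_j)\le\varepsilon^{-2}\|\boldsymbol{\kappa}_j\|_F^2$.

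The only difference is bookkeeping. You bound $\|P_1\kappa_jP_2\|_F\le\|\kappa_j\|_F\le\sqrt{k_0(k-k_0)}\,Ce^{\beta T/4}/r$ directly (your $\kappa_j$ is the paper's $C_{1,2}$). The paper instead bounds the entries of the projected matrix by $Ck\,e^{\beta T/4}/r$ and then uses $\mathrm{Tr}(\boldsymbol{\kappa}\boldsymbol{\kappa}^H)\le k^2\|\boldsymbol{\kappa}\|_\infty^2$, which costs a factor $k$. Your $k^2$ is therefore a genuine, if harmless, improvement. Your aside that the entries of $P_1\kappa_jP_2$ remain $O(e^{\beta T/4}/r)$ is not justified, since they are only bounded by $\|\kappa_j\|_{\mathrm{op}}$, but you never use it.

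The difficulty you single out is real, and the paper does not resolve it. It asserts that $\Delta_j\boldsymbol{Z}_q$ is ``by construction'' a centered Gaussian vector and that the increments are independent in time, as you also claim in your first sentence. It then applies the Gaussian formula \eqref{expressionHellinger} to block-diagonal covariances, exactly as you do. Section~\ref{fidi section} treats $k=2$ the same way, so handling it ``as there'' amounts to assuming it.

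Neither claim is true. Each $W^{(i)}$ is a complex Brownian motion, but for $i,i'\in J_1$ the martingale $W^{(i)}-W^{(i')}$ has random bracket $2\int|e^{-i\alpha_{x_i}}-e^{-i\alpha_{x_{i'}}}|^2\,\mathrm{d}s$. Moreover, the conditional law of $\Delta_j\boldsymbol{W}_q$ given $\mathcal{F}_{t_{j-1}}$ depends on the phases at $t_{j-1}$, which earlier increments essentially reveal. This is harmless for Lemma~\ref{lem:gammaclose}: its proof only needs $\Delta_jW^{(i)}$ and $\Delta_jZ^{(i)}$ to be conditionally centered given the past, with $\E[|\Delta_jW^{(i)}|^2\mid\text{past}]=2\delta$. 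The Hellinger step is where joint Gaussianity and independence across $j$ are genuinely used. So at this point your argument stands or falls with the paper's.

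None of your proposed patches closes this gap.
\begin{itemize}
\item \emph{Gaussian coupling.} A Gaussian $\Delta_j\boldsymbol{Z}_q$ cannot satisfy \eqref{eq:L2cond} once $n$ is large. Up to an $L^2$ error $o(\sqrt{\delta})$, $\Delta_j\boldsymbol{W}_q=v_jG_j$, where $G_j\sim\mathcal{N}_{\mathbb{C}}(0,2\delta)$ is independent of the random phase vector $v_j=(e^{-i\alpha_{x_i}(t_{j-1})})_{i\in J_1}$. (For $J_2$, take the phases of $\alpha_{x_i+r}-\alpha_r$ and $G_j=\Delta_jW_r$.) This is a mixture of rank-one Gaussians. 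Its Wasserstein-$2$ distance to every Gaussian law is of order $\sqrt{\delta}$ as soon as the phase differences are non-degenerate, far above the $\sqrt{k}/n$ allowed by \eqref{eq:L2cond}.
\item \emph{Freezing the phases.} This yields exactly the conditionally Gaussian vector above, but with a random rank-one covariance, so the deterministic-covariance Hellinger formula still does not apply. Across clusters you also cannot freeze $\alpha_r$: its rotation inside each step is precisely what decorrelates $\Delta_jW$ from $\Delta_jW_r$.
\item \emph{Girsanov on path space.} This is unavailable, because $\langle\Re W,\Re W_r\rangle_t=\int_0^t\cos\alpha_r(s)\,\mathrm{d}s$ is a.s.\ nonzero for small $t$. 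Hence the law of $(W,W_r)$ is singular with respect to that of an independent pair.
\end{itemize}
What is missing, in your proposal as in the paper's proof, is an argument bounding the total variation distance for this non-Gaussian, non-product sequence of increments; second moments alone cannot provide it.
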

		
		\noindent \textit{Construction via spectral regularization.}
		Fix a threshold $\varepsilon \defeq \frac{1}{2k n^2}$.
		Let $q\in\{1,2\}$ be the cluster index and fix a time step $j$. Let $M_q^j$ be the spatial covariance matrix of the Brownian increments $\Delta_j \boldsymbol{W}_q$. Let $P^j_q$ be the unitary matrix diagonalizing $M_q^j$ with ordered eigenvalues $\lambda_1 \ge \dots \ge \lambda_k \ge 0$ that is $M_q^j = P_q^j \;\mbox{diag}(\lambda_1, \cdots, \lambda_k ) \; (P_q^j)^H$.
		We define the cutoff index $p^j_q$ such that $\lambda_\ell > \varepsilon$ for $\ell \le p^j_q$ and $\lambda_\ell \le \varepsilon$ for $\ell > p^j_q$.
		
		We define the increments $\Delta_j \boldsymbol{Z}_q$ by manipulating the coordinates in the eigenbasis. Let $\boldsymbol{Y}_q \defeq P_q^j (\Delta_j \boldsymbol{W}_q)$ be the vector of eigen-coordinates. We construct the regularized coordinates $\tilde{\boldsymbol{Y}}_q$ as follows:
		\begin{itemize}
			\item For $\ell \leq p^j_q$ (Large modes): we set $\tilde{Y}_q^{(\ell)} = Y_q^{(\ell)}$.
			\item For $\ell > p^j_q$ (Small modes): we set $\tilde{Y}_q^{(\ell)} = \xi^{(\ell)}_q$, where $\xi^{(\ell)}_q \sim \mathcal{N}_{\mathbb{C}}(0, \varepsilon)$ are independent complex Gaussian variables of variance $\eps$, independent of $\Delta \boldsymbol{W}$. We also ask that the families $\big(\tilde{Y}^{(\ell)}_q\big)_{\ell}$ are independent for distinct $j$'s.
		\end{itemize}
		Finally, we rotate back to the original basis: $\Delta_j \boldsymbol{Z}_q \defeq (P_q)^H \tilde{\boldsymbol{Y}}_q$. Note that the covariance matrix of $\Delta_j \boldsymbol{Z}_q$ is given by
		\begin{align*}
			\tilde M_q^j := P_q^j \;\mbox{diag}(\lambda_1, \cdots, \lambda_{p_q^j}, \eps,\cdots,\eps) \; \big(P_q^j\big)^H.
		\end{align*}
		
		\medskip
		\noindent \textit{Verification of the assumptions.}
		By construction, $\Delta_j \boldsymbol{Z}_q$ is a centered Gaussian vector. Since we defined independent $\tilde{\boldsymbol{Y}}_q$ at each step $j$, the joint increments $(\Delta_j \boldsymbol{Z}_q, \Delta_j \boldsymbol{W}_q)$ are independent in time. For the $L^2$-bound, we sum the errors in the eigenbasis. For the small modes $\ell > p^j_q$, the error variance is $\mathbb{E}\big[\big|Y^{(\ell)}_q - \xi^{(\ell)}_q\big|^2\big] = \text{Var}\big(Y_q^{(\ell)}\big) + \text{Var}\big(\xi_q^{(\ell)}\big) = \lambda_{\ell} + \varepsilon \leq 2\varepsilon$. Thus:
		$$ \mathbb{E}\big[\|\Delta_j \boldsymbol{W}_q - \Delta_j \boldsymbol{Z}_q \|^2\big] = \sum_{\ell=p_q+1}^k (\lambda_\ell + \eps) \leq 2 k\eps  = \frac{1}{n^2}. $$
		This implies the coordinate-wise condition \eqref{eq:L2cond} required by Lemma \ref{lem:gammaclose}.
		
		\medskip

		\textit{Structure of the covariance matrices.}
		Before computing \eqref{distTVZ}, we write the form of the covariance matrices $\Sigma_{a}$ and $\Sigma_{b}$ of $(\Delta \boldsymbol{Z}_1, \Delta \boldsymbol{Z}_2)$ versus $(\Delta \boldsymbol{Z}_1 \otimes \Delta \boldsymbol{Z}_2)$. Recall that the diffusions $(\alpha_{x_{i_1}})_{i_1\in J_1},\ (\alpha_{x_{i_2}+r})_{i_2\in J_2}$ defined in \eqref{eqalphar} encode the correlations in between $\Delta \boldsymbol{W}_1$ and $\Delta \boldsymbol{W}_2$ via the expressions
		\begin{equation}\label{eq:code} W^{(i_1)}(t)=\int_0^t \exp
			\Big(i\big(\alpha_{x_{i_2}+r}(s)-\alpha_{x_{i_1}}(s)\big)\Big)\mathrm{d}W^{(i_2)}(s),\qquad  \big(i_1\in J_1,\ i_2\in J_2\big),\end{equation}
		where $(\boldsymbol{W}_1,\boldsymbol{W}_2)=(W^{(i)})_{1\leq i\leq k}$ and $x_{i_1}\leq 0 \leq x_{i_2}$.

		The covariance matrices $\Sigma_{a}$ and $\Sigma_{b}$ are block matrices of the form
		\begin{align*}
			\Sigma_{a} =\begin{pmatrix} K_1 & & \\
				& \ddots & \\
				& & K_n
			\end{pmatrix}, \qquad \Sigma_{b} =\begin{pmatrix} L_1 & & \\
				& \ddots & \\
				& & L_n
			\end{pmatrix}.
		\end{align*}
		For $1\leq j\leq n$, we define the $|J_1| \times |J_2|$ matrices $\boldsymbol{\kappa}_j$ by
		\begin{align*}
			\boldsymbol{\kappa}_j := \E\Big[ \Big(\Delta_j\boldsymbol{Z}_1\,\Big)\cdot \Big(\,\Delta_j\boldsymbol{Z}_2\Big)^H\Big]\,.
		\end{align*}
		Then, the blocks (of size $k \times k$)  $K_j$ and $L_j$ are given by
		\begin{align*}
			K_j\defeq\begin{pmatrix} 
				\tilde	 M^j_1& \boldsymbol{\kappa}_j
				\\ \boldsymbol{\kappa}_j^H &\tilde  M^j_2
			\end{pmatrix}\,, \qquad L_j\defeq\begin{pmatrix} 
				\tilde M^j_1 &\boldsymbol{0}^{H}
				\\\boldsymbol{0} & \tilde M^j_2
			\end{pmatrix}\,.
		\end{align*}
		We show in the next lemma that the two matrices $K_j$ and $L_j$ are close in infinite norm.
		\begin{Lemma}\label{lem:kpoints proof}
			For all $\beta>0$, there exists a constant $C=C(\beta)$ such that for all $k\geq 2$, $j\geq 1$, we have
			\begin{align*}
				\big\|\boldsymbol{\kappa}_j\big\|_\infty=\Big\|\,\E\Big[\big(\,\Delta_j\boldsymbol{Z}_1\,\big) \cdot \big(\,\Delta_j\boldsymbol{Z}_2\big)^H\Big]\Big\|_{\infty} \leq C \,k\,\frac{e^{\beta T/4}}{r}\,.	
			\end{align*}
		\end{Lemma}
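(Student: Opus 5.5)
The plan is to reduce the bound on $\boldsymbol{\kappa}_j$ to the scalar estimate of Lemma \ref{lem:covarianceBrowniens}, applied entrywise to the original Brownian increments. The spectral regularization then only costs a factor $k$, which comes from an $\ell^1$/$\ell^\infty$ comparison of unitary conjugations.

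\emph{Step 1: cross-covariance of the $W$'s.} For $i_1\in J_1$, $i_2\in J_2$ the representation \eqref{eq:code} gives
\begin{align*}
\E\big[\Delta_j W^{(i_1)}\,\overline{\Delta_j W^{(i_2)}}\big]=2\,\E\Big[\int_{t_{j-1}}^{t_j} e^{i(\alpha_{x_{i_2}+r}(u)-\alpha_{x_{i_1}}(u))}\,\mathrm{d}u\Big].
\end{align*}
By Property (1) of the carousel, $\alpha_{x_{i_2}+r}-\alpha_{x_{i_1}}$ solves the sine equation with parameter $r+x_{i_2}-x_{i_1}\geq r$, driven by some complex Brownian motion. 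The integration by parts of Lemma \ref{lem:covarianceBrowniens} applies verbatim with $r$ replaced by $r':=r+x_{i_2}-x_{i_1}$. It gives the entrywise bound
\begin{align*}
\big|\E\big[\Delta_j W^{(i_1)}\overline{\Delta_j W^{(i_2)}}\big]\big|\leq C(1+1/\beta)\,\frac{e^{\beta T/4}}{r'}\leq C\,\frac{e^{\beta T/4}}{r}.
\end{align*}
Call this matrix $\boldsymbol{\kappa}^W_j$.

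\emph{Step 2: effect of the regularization.} The small-mode replacements $\xi_q^{(\ell)}$ are independent of everything else, including the other cluster. Hence only the projections onto the large modes contribute to the cross-covariance. Write $\Pi_q$ for the orthogonal projector onto the span of the top $p_q^j$ eigenvectors of $M_q^j$. Then $\Delta_j\boldsymbol{Z}_q=\Pi_q\Delta_j\boldsymbol{W}_q+(\text{independent noise})$, and therefore
\begin{align*}
\boldsymbol{\kappa}_j=\Pi_1\,\boldsymbol{\kappa}^W_j\,\Pi_2.
\end{align*}
This step also requires checking that the $\xi$'s at different clusters are chosen independent; they are by construction.

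\emph{Step 3: norm bound.} The main obstacle is controlling $\|\Pi_1\boldsymbol{\kappa}^W_j\Pi_2\|_\infty$ in terms of $\|\boldsymbol{\kappa}^W_j\|_\infty$, since projectors are not contractions for the entrywise max-norm. I would pass through the operator norm. Orthogonal projectors contract the operator norm, so
\begin{align*}
\max_{a,b}|(\boldsymbol{\kappa}_j)_{ab}|\leq\|\Pi_1\boldsymbol{\kappa}^W_j\Pi_2\|_{op}\leq\|\boldsymbol{\kappa}^W_j\|_{op}\leq\sqrt{k_0(k-k_0)}\,\max|(\boldsymbol{\kappa}^W_j)_{ab}|\leq k\,\max|(\boldsymbol{\kappa}^W_j)_{ab}|.
\end{align*}
If $\|\cdot\|_\infty$ means the row-sum (induced $\infty$-) norm instead, the same chain still works. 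The row-sum norm is bounded by $\sqrt{k}$ times the operator norm, and the operator norm is bounded by $\sqrt{k}$ times the entrywise max. This yields the factor $k$ again. Combining this with Step 1 gives $\|\boldsymbol{\kappa}_j\|_\infty\leq C\,k\,e^{\beta T/4}/r$, which is the claim.
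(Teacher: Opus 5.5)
Your argument is correct and is essentially the paper's. The independent small-mode noise kills the cross terms, so $\boldsymbol{\kappa}_j=\Pi_1 C_{1,2}\Pi_2$. The projectors then cost a factor $\sqrt{k_0(k-k_0)}\le k$: you get this through $\|\cdot\|_{\mathrm{op}}$ and the Frobenius norm, which is a clean version of the paper's somewhat loose entrywise display. Finally, Lemma \ref{lem:covarianceBrowniens} bounds the entries of $C_{1,2}$, and you usefully make explicit, via Property (1), that the pair $(i_1,i_2)$ involves the parameter $r+x_{i_2}-x_{i_1}\ge r$.

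Drop the row-sum aside, though. The inequality $\|A\|_{\mathrm{op}}\le\sqrt{k}\max_{a,b}|A_{ab}|$ is false for a $k_0\times(k-k_0)$ matrix; the correct factor is $\sqrt{k_0(k-k_0)}$, so that chain only gives a factor of order $k^{3/2}$. In any case the paper's $\|\cdot\|_\infty$ is the entrywise maximum, as its later use $\mathrm{Tr}(\boldsymbol{\kappa}\boldsymbol{\kappa}^H)\le k^2\|\boldsymbol{\kappa}\|_\infty^2$ shows.
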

		
		\begin{proof} Fix $j\in \{1,\cdots,n\}$. We omit the dependence in $j$ of certain quantities in this proof to alleviate the notations. 
			
			Recall the definition of the increments in the eigenbasis: $\Delta_j \boldsymbol{W}_q = (P_{q})^H \boldsymbol{Y}_{q}$ and $\Delta_j \boldsymbol{Z}_q = (P_{q})^H \tilde{\boldsymbol{Y}}_q$. Let us decompose $\tilde{\boldsymbol{Y}}_{q} = \hat{\boldsymbol{Y}}_{q} + \boldsymbol{N}_q$ where $\hat Y^{(\ell)}_q =  Y^{(\ell)}_q 1_{\ell \leq p_q} $ and $N_q^{(\ell)} = \xi^{(\ell)}_q 1_{\ell > p_q}$. 
			Substituting this into the definition of the cross-covariance $\boldsymbol{\kappa}_j$
			\begin{align*}
				\boldsymbol{\kappa}_j &= \mathbb{E}\Big[ \Delta_j \boldsymbol{Z}_1\cdot (\Delta_j \boldsymbol{Z}_2)^H \Big] \\
				&= (P_1)^H \, \mathbb{E}\Big[ \big(\hat{\boldsymbol{Y}}_1 + \boldsymbol{N}_1\big)\cdot \big(\hat{\boldsymbol{Y}}_2 + \boldsymbol{N}_2\big) \Big] \, P_2.
			\end{align*}
			As the noise terms $\boldsymbol{N}_1$ and $\boldsymbol{N}_2$ are constructed from independent centered variables, which are also independent of $\boldsymbol{W}_q$ (and thus of $\hat{\boldsymbol{Y}}_q$), all cross-terms vanish and we obtain
			$$ \boldsymbol{\kappa}_j = (P_1)^H \, \mathbb{E}\Big[\big( \hat{\boldsymbol{Y}}_1\big)\cdot \big(\hat{\boldsymbol{Y}}_2\big)^H \Big] \, P_2. $$
			Recall that $\hat{\boldsymbol{Y}}_q = D_q \boldsymbol{Y}_q$, where $D_q$ is the diagonal matrix with entries $(D_q)_{\ell\ell} = \mathbf{1}_{\ell \leq p_q}$. Thus
			$$ \boldsymbol{\kappa}_j = (P_1^H D_1 P_1) \; \mathbb{E}\Big[ \big(\Delta_j \boldsymbol{W}_1\big)\cdot  \big(\Delta_j \boldsymbol{W}_2 \big)^H \Big] \, (P_2^H D_2 P_2). $$
			
			Let $C_{1,2} := \mathbb{E} \Big[ \big(\Delta_j \boldsymbol{W}_1\big)\cdot  \big(\Delta_j \boldsymbol{W}_2 \big)^H \Big]$. Let $(e_\ell)_{\ell\in\{1,\cdots,k\}}$ be the canonical base in $\mathbb{C}^k$. We bound the coefficients of $\boldsymbol{\kappa}_j$ element-wise
			\begin{align*}
				\big|(\boldsymbol{\kappa}_j)_{m,\ell}\big| &= \big|\transp{e_m} \;(P_1^H D_1 P_1) \; C_{1,2}\; (P_2^H D_2 P_2)\; e_\ell\,\big| \\
				&\leq \big\| (P_1^H D_1 P_1) \; e_m\big\|\; \big\|(P_2^H D_2 P_2)\; e_\ell\,\big\|\; \big\|C_{1,2}\big\|_\infty \,.
			\end{align*}
			Since $P_q^H D_q P_q$ are orthogonal projections, we have the upper bound
			$$ \big|(\boldsymbol{\kappa}_j)_{m,\ell}\big| \leq \sqrt{|J_1|} \sqrt{|J_2|}\; \big\|C_{1,2}\big\|_\infty \leq k\; \big\|C_{1,2}\big\|_\infty \,. $$
			Using Lemma \ref{lem:covarianceBrowniens}, which ensures $\|C_{1,2}\|_\infty \leq C e^{\beta T/4}/r$, we obtain the result.
		\end{proof}

		\textit{Computations of Hellinger's distance.} Now, we can bound the total variation distance of Lemma \ref{Lem:good candidates} using Hellinger distance. Using the bound \eqref{boundTVHellinger} and the expression of Hellinger distance given in \eqref{expressionHellinger}, as well as the block-diagonal form of the covariance matrices $\Sigma_{a}$ and $\Sigma_{b}$, we have:
		\begin{align*}
			\frac{1}{\sqrt{2}}\mathrm{d}_{TV}\bigg(\Big(\Delta_j\boldsymbol{Z}_1,\Delta_j\boldsymbol{Z}_2\Big)_{j=1}^n,\Big(\Delta_j\boldsymbol{Z}_1\otimes\Delta_j\boldsymbol{Z}_2\Big)_{j=1}^n\bigg)\leq \sqrt{1- \prod_{j=1}^n\frac{\det (K_j)^{1/2} \det (L_j)^{1/2}}{\det \big((K_j+L_j)/2)\big)}}.
		\end{align*}
		If we can prove that for some small $\eta \leq 1/4$, for all $j\in\{1,\cdots,n\}$, we have
		\begin{align}\label{ratiodet}
			\bigg|\,\frac{\det K_j}{\det L_j}\,-\,1\,\bigg| \leq \eta, \qquad \bigg|\,\frac{\det \big((K_j+L_j)/2\big)}{\det L_j}\,-1\,\bigg|\leq \eta\,,
		\end{align}
		then we control the Hellinger distance by
		\begin{align}\label{boundHell}
			\sqrt{1- \prod_{j=1}^n\;\frac{\big(\det K_j/\det L_j\big)^{1/2} }{\det\big((K_j+L_j)/2)\big)/\det L_j}}\leq \sqrt{1- \frac{\big(1-\eta\big)^{n/2} }{(1+\eta)^n}} \leq \sqrt{C\, n \, \eta}\,.
		\end{align}

		We therefore need to bound the ratios as in \eqref{ratiodet}. We focus on the ratio $\det K_j / \det L_j$. Let us drop again the subscript $j$ to ease the reading. Using the Schur complement formula for block determinants, we obtain
		\begin{align*}
			\frac{\det K}{\det L} &= \frac{\det\big(\tilde{M}_1\big) \det\big(\tilde{M}_2 - \boldsymbol{\kappa}^H \big(\tilde{M}_1\big)^{-1} \boldsymbol{\kappa}\big)}{\det\big(\tilde{M}_1\big) \det\big(\tilde{M}_2\big)} \\
			&= \det\Big( I - \big(\tilde{M}_2\big)^{-1} \boldsymbol{\kappa}^H \big(\tilde{M}_1\big)^{-1} \boldsymbol{\kappa} \Big).
		\end{align*}
		
		Define $S_q := (\tilde{M}_q)^{-1/2}$ for $q=1,2$ (we use here that $\tilde M_q$ is a Hermitian matrix). Using Sylvester's determinant formula $\det (I - AB) = \det (I - BA)$, it writes
		\begin{align*}
			\frac{\det K}{\det L} = 	\det\Big( I - (S_1 \boldsymbol{\kappa} S_2)^H (S_1 \boldsymbol{\kappa} S_2)\Big).
		\end{align*}
		The matrix $(S_1 \boldsymbol{\kappa} S_2)^H (S_1 \boldsymbol{\kappa} S_2)$ is Hermitian positive definite. If its trace is smaller than $1$, using the inequality
		\begin{align}\label{ineqeasymatrix}
			1-\mbox{Tr}(P)\leq \det(I - P) \leq 1 
		\end{align}
		valid for all positive definite Hermitian matrices with eigenvalues smaller than $1$, we have
		\begin{align*}
			0	\leq 1-	\frac{\det K}{\det L}  \leq \mbox{Tr}\Big(\big(\tilde{M}_2\big)^{-1} \boldsymbol{\kappa}^H \big(\tilde{M}_1\big)^{-1} \boldsymbol{\kappa} \Big)\,.
		\end{align*}
		We now bound the trace of $R := \big(\tilde{M}_2\big)^{-1} \boldsymbol{\kappa}^H \big(\tilde{M}_1\big)^{-1} \boldsymbol{\kappa}$, in particular showing it is smaller than $1$ for large $r$. Recall that by construction, the eigenvalues of $\tilde{M}_1$ and $\tilde{M}_2$ are bounded from below by $\varepsilon$. 
		
		Using the trace inequality $\mathrm{Tr}(A B) \leq \|A\|_{\mathrm{op}} \mathrm{Tr}(B)$ valid for a Hermitian positive semi-definite matrix $B$, we get:
		\begin{align*}
			\mbox{Tr}(R) &\leq \frac{1}{\varepsilon} \; \mbox{Tr}\Big( \boldsymbol{\kappa}^H \big(\tilde{M}_1\big)^{-1} \boldsymbol{\kappa}\Big) \\
			&\leq  \frac{1}{\varepsilon} \; \mbox{Tr}\Big( \big(\tilde{M}_1\big)^{-1} \boldsymbol{\kappa} \boldsymbol{\kappa}^H\Big)\\
			& \leq \frac{1}{\varepsilon^2} \; \mbox{Tr}\big( \boldsymbol{\kappa} \boldsymbol{\kappa}^H\big)\,.
		\end{align*}
		This implies:
		\begin{align*}
			\text{Tr}(R) \leq \frac{k^2}{\varepsilon^2} \| \boldsymbol{\kappa}\|_{\infty}^2\,.
		\end{align*}
		Using Lemma \ref{lem:kpoints proof} for the bound on $\|\boldsymbol{\kappa}\|_\infty$, we find
		\begin{align*}
			\text{Tr}(R) \leq \frac{k^2}{\varepsilon^2} \left( C k \frac{e^{\beta T/4}}{r} \right)^2 = C \frac{k^4 e^{\beta T/2}}{\varepsilon^2 r^2}.
		\end{align*}
		Substituting $\varepsilon = (2kn^2)^{-1}$, we infer
		$$ \text{Tr}(R) \leq C \cdot k^6 n^4 \frac{e^{\beta T/2}}{r^2}. $$
		Assume this bound is smaller than $1$ so that indeed the inequality \eqref{ineqeasymatrix} holds. The same bound holds for the ratio involving $(K_j+L_j)/2$ (replacing $\boldsymbol{\kappa}_j$ with $\boldsymbol{\kappa}_j/2$).

		We have therefore bounded all the ratios by $\eta :=  C \, k^6 n^4 \frac{e^{\beta T/2}}{r^2}$ under the assumption $\text{Tr}(R) \leq 1$. Thanks to \eqref{boundHell}, it yields the result of Lemma \ref{Lem:good candidates} under this assumption. But note that when $\text{Tr}(R) \geq 1$, our bound is large but the total variation is always bounded by $2$ so we can safely replace it by our large bound, which concludes the proof.
		\qed

		\bibliographystyle{plain}
		\bibliography{Mabibliotheque}

	\end{document}